\documentclass[12pt]{amsart}
\usepackage[hmargin=2.5cm,vmargin=2.5cm]{geometry}
\usepackage{amsfonts, amstext, amsmath, amsthm, amscd, amssymb, amsbsy}
\usepackage{graphicx, color}

\usepackage{listings}
\usepackage[noadjust]{cite}
\usepackage[hidelinks,pagebackref]{hyperref}
\usepackage{enumitem}
\usepackage{setspace}
\usepackage{float}
\usepackage{subcaption}
\usepackage{mathrsfs}
\usepackage{tikz}
\usepackage{pinlabel}
\usepackage{multirow}

\usetikzlibrary{
	knots,
	hobby,
	decorations.pathreplacing,
	decorations.markings,
	shapes.geometric,
	arrows.meta,
	calc,
	arrows.meta,
	spath3,
	intersections
}

\tikzset{
	knot diagram/every strand/.append style={
		ultra thick,
		red!30
	},
	show curve controls/.style={
		postaction=decorate,
		decoration={show path construction,
			curveto code={
				\draw [blue, dashed]
				(\tikzinputsegmentfirst) -- (\tikzinputsegmentsupporta)
				node [at end, draw, solid, red, inner sep=2pt]{};
				\draw [blue, dashed]
				(\tikzinputsegmentsupportb) -- (\tikzinputsegmentlast)
				node [at start, draw, solid, red, inner sep=2pt]{}
				node [at end, fill, blue, ellipse, inner sep=2pt]{}
				;
			}
		}
	},
	show curve endpoints/.style={
		postaction=decorate,
		decoration={show path construction,
			curveto code={
				\node [fill, blue, ellipse, inner sep=2pt] at (\tikzinputsegmentlast) {}
				;
			}
		}
	}
}

\AtBeginDocument{
	\def\MR#1{}
}

\pagecolor{white}

\allowdisplaybreaks

\newcommand{\Z}{\mathbb{Z}}

\newcommand{\R}{\mathbb{R}}

\newtheorem{thm}{Theorem}
\numberwithin{thm}{section}

\newtheorem{conj}[thm]{Conjecture}
\newtheorem{prop}[thm]{Proposition}
\newtheorem{lemma}[thm]{Lemma}
\newtheorem{cor}[thm]{Corollary}

\newtheorem*{namedtheorem}{\theoremname}
\newcommand{\theoremname}{testing}
\newenvironment{named_thm}[1]{\renewcommand{\theoremname}{#1}\begin{namedtheorem}}{\end{namedtheorem}}
\newcommand{\refthm}[1]{Theorem~\ref{thm:#1}}

\theoremstyle{definition}
\newtheorem{defn}[thm]{Definition}
\newtheorem*{nameddef}{\defname}
\newcommand{\defname}{testing}

\theoremstyle{definition}
\newtheorem{rmk}[thm]{Remark}
\newtheorem{conv}[thm]{Convention}

\newcommand{\tikzone}{
	\mathord{
		\tikz[baseline={(0,0)}]{
			\useasboundingbox (-0.75,-1.5) rectangle (2.25,1.5);
			
			\coordinate (top) at (0,1.5);
			\coordinate (mid) at (0,0.5);
			\coordinate (bot) at (0,-1.5);
			
			\fill[yellow!20] (mid) to[out=-45,in=45,looseness=1.5] (bot) to[out=135,in=-135,looseness=1.5] (mid);
			\draw[thick,black] (mid) to[out=-45,in=45,looseness=1.5] (bot);
			\draw[thick,black] (bot) to[out=135,in=-135,looseness=1.5] (mid);
			\node at (0,-0.5) {$G_1$};
			
			\fill[gray!30] (top) to[out=-10,in=10,looseness=1.5] (bot) to[out=-10,in=10,looseness=2.5] (top);
			\draw[thick,black] (top) to[out=-10,in=10,looseness=1.5] (bot);
			\draw[thick,black] (bot) to[out=-10,in=10,looseness=2.5] (top);
			\node at (1.7,0) {$G_2$};
			
			\fill[black] (top) circle (2.5pt);
			\fill[black] (mid) circle (2.5pt);
			\fill[black] (bot) circle (2.5pt);
			
			\draw[-Triangle, thick, black] (mid) -- node[left] {$e$} (0,1.4);
			
			\draw[thick, red] (mid) circle (5pt);
			\node[thick, red] at (0.6, 0.5) {$*$};
		}
	}
}

\newcommand{\tikzonef}{
	\mathord{
		\tikz[baseline={(0,0)}]{
			\useasboundingbox (-0.75,-1.75) rectangle (2.25,1.5);
			
			\coordinate (top) at (0,1.5);
			\coordinate (mid) at (0,-0.5);
			\coordinate (bot) at (0,-1.5);
			
			\fill[yellow!20] (top) to[out=-45,in=45,looseness=1.5] (mid) to[out=135,in=-135,looseness=1.5] (top);
			\draw[thick,black] (top) to[out=-45,in=45,looseness=1.5] (mid);
			\draw[thick,black] (mid) to[out=135,in=-135,looseness=1.5] (top);
			\node at (0,0.5) {$\rho(G_1)$};
			
			\fill[gray!30] (top) to[out=-10,in=10,looseness=1.5] (bot) to[out=-10,in=10,looseness=2.5] (top);
			\draw[thick,black] (top) to[out=-10,in=10,looseness=1.5] (bot);
			\draw[thick,black] (bot) to[out=-10,in=10,looseness=2.5] (top);
			\node at (1.7,0) {$G_2$};
			
			\fill[black] (top) circle (2.5pt);
			\fill[black] (mid) circle (2.5pt);
			\fill[black] (bot) circle (2.5pt);
			
			\draw[thick, black] (bot) edge[-Triangle] (0,-0.6);
			
			\draw[thick, red] (top) circle (5pt);
			\node[thick, red] at (-0.5, -0.5) {$*$};
		}
	}
}

\newcommand{\tikztwo}{
	\mathord{
		\tikz[baseline={(0,-0.5)}]{
			\useasboundingbox (-0.75,-1.5) rectangle (1.25,1.5);
						
			\coordinate (mid) at (0,0.5);
			\coordinate (bot) at (0,-1.5);
			
			\fill[yellow!20] (mid) to[out=-45,in=45,looseness=1.5] (bot) to[out=135,in=-135,looseness=1.5] (mid);
			\draw[thick,black] (mid) to[out=-45,in=45,looseness=1.5] (bot);
			\draw[thick,black] (bot) to[out=135,in=-135,looseness=1.5] (mid);
			\node at (0,-0.5) {$G_1$};
			
			\fill[black] (mid) circle (2.5pt);
			\fill[black] (bot) circle (2.5pt);
			
			\draw[thick, red] (mid) circle (5pt);
			\node[thick, red] at (1, -0.5) {$*$};
		}
	}
}

\newcommand{\tikztwof}{
	\mathord{
		\tikz[baseline={(0,-0.5)}]{
			\useasboundingbox (-1.25,-1.5) rectangle (1,1.5);
			
			\coordinate (mid) at (0,0.5);
			\coordinate (bot) at (0,-1.5);
			
			\fill[yellow!20] (mid) to[out=-45,in=45,looseness=1.5] (bot) to[out=135,in=-135,looseness=1.5] (mid);
			\draw[thick,black] (mid) to[out=-45,in=45,looseness=1.5] (bot);
			\draw[thick,black] (bot) to[out=135,in=-135,looseness=1.5] (mid);
			\node at (0,-0.5) {$\rho(G_1)$};
			
			\fill[black] (mid) circle (2.5pt);
			\fill[black] (bot) circle (2.5pt);
			
			\draw[thick, red] (mid) circle (5pt);
			\node[thick, red] at (-1, -0.5) {$*$};
		}
	}
}

\newcommand{\tikzthree}{
	\mathord{
		\tikz[baseline={(0,0)}]{
			\useasboundingbox (-0.25,-1.5) rectangle (1.5,1.5);
			
			\coordinate (top) at (0,0.5);
			\coordinate (bot) at (0,-0.5);
			
			\fill[gray!30] (top) to[out=-10,in=10,looseness=2] (bot) to[out=-20,in=20,looseness=4.5] (top);
			\draw[thick,black] (top) to[out=-10,in=10,looseness=2] (bot);
			\draw[thick,black] (bot) to[out=-20,in=20,looseness=4.5] (top);
			\node at (0.9,0) {$G_2$};
			
			\fill[black] (top) circle (2.5pt);
			\fill[black] (mid) circle (2.5pt);
			\fill[black] (bot) circle (2.5pt);
			
			\draw[thick, black] (bot) edge[-Triangle] (0,0.4);
			
			\draw[thick, red] (bot) circle (5pt);
			\node[thick, red] at (0.25, 0) {$*$};
		}
	}
}

\newcommand{\tikzthreef}{
	\mathord{
		\tikz[baseline={(0,0)}]{
			\useasboundingbox (-0.5,-1.5) rectangle (1.5,1.5);
			
			\coordinate (top) at (0,0.5);
			\coordinate (bot) at (0,-0.5);
			
			\fill[gray!30] (top) to[out=-10,in=10,looseness=2] (bot) to[out=-20,in=20,looseness=4.5] (top);
			\draw[thick,black] (top) to[out=-10,in=10,looseness=2] (bot);
			\draw[thick,black] (bot) to[out=-20,in=20,looseness=4.5] (top);
			\node at (0.9,0) {$G_2$};
			
			\fill[black] (top) circle (2.5pt);
			\fill[black] (mid) circle (2.5pt);
			\fill[black] (bot) circle (2.5pt);
			
			\draw[thick, black] (bot) edge[-Triangle] (0,0.4);
			
			\draw[thick, red] (top) circle (5pt);
			\node[thick, red] at (-0.35, 0) {$*$};
		}
	}
}

\newcommand{\tikzfour}{
	\mathord{
		\tikz[baseline={(0,0)}]{
			\useasboundingbox (-2,-1.5) rectangle (0.25,1.5);
			
			\coordinate (bot) at (0,-0.01);
			\coordinate (top) at (0,0.01);
			
			\fill[yellow!20] (top) to[out=135,in=-135,looseness=200] (bot) to[out=-120,in=120,looseness=650] (top);
			\draw[thick,black] (bot) to[out=-120,in=120,looseness=650] (top);
			\draw[thick,black] (top) to[out=135, in=-135,looseness=200] (bot);
			\node at (-1.25, 0) {$G_1$};
			
			\fill[black] (0,0) circle (2.5pt);
			
			\draw[thick, red] (0,0) circle (5pt);
			\node[thick, red] at (-0.5, 0) {$*$};
		}
	}
}

\newcommand{\tikzfourf}{
	\mathord{
		\tikz[baseline={(0,0)}]{
			\useasboundingbox (-0.25,-1.5) rectangle (2,1.5);
			
			\coordinate (bot) at (0,-0.01);
			\coordinate (top) at (0,0.01);
			
			\fill[yellow!20] (top) to[out=-55,in=55,looseness=200] (bot) to[out=60,in=-60,looseness=650] (top);
			\draw[thick,black] (bot) to[out=60,in=-60,looseness=650] (top);
			\draw[thick,black] (top) to[out=-55, in=55,looseness=200] (bot);
			\node at (1.25, 0) {$\rho(G_1)$};
			
			\fill[black] (0,0) circle (2.5pt);
			
			\draw[thick, red] (0,0) circle (5pt);
			\node[thick, red] at (0.35, 0) {$*$};
		}
	}
}

\newcommand{\tikzfourff}{
	\mathord{
		\tikz[baseline={(0,0)}]{
			\useasboundingbox (-2.5,-1.5) rectangle (0.25,1.5);
			
			\coordinate (bot) at (0,-0.01);
			\coordinate (top) at (0,0.01);
			
			\fill[yellow!20] (top) to[out=135,in=-135,looseness=150] (bot) to[out=-120,in=120,looseness=650] (top);
			\draw[thick,black] (bot) to[out=-120,in=120,looseness=650] (top);
			\draw[thick,black] (top) to[out=135, in=-135,looseness=150] (bot);
			\node at (-1.25, 0) {$\rho(G_1)$};
			
			\fill[black] (0,0) circle (2.5pt);
			
			\draw[thick, red] (0,0) circle (5pt);
			\node[thick, red] at (-2.25, 0) {$*$};
		}
	}
}

\newcommand{\tikzfive}{
	\mathord{
		\tikz[baseline={(0,0)}]{
			\useasboundingbox (-0.25,-1.5) rectangle (2,1.5);
			
			\coordinate (top) at (0,1);
			\coordinate (mid) at (0,0);
			\coordinate (bot) at (0,-1);
			
			\fill[gray!30] (top) to[out=-10,in=10,looseness=1.5] (bot) to[out=-10,in=10,looseness=2.8] (top);
			\draw[thick,black] (top) to[out=-10,in=10,looseness=1.5] (bot);
			\draw[thick,black] (bot) to[out=-10,in=10,looseness=2.8] (top);
			\node at (1.2,0) {$G_2$};
			
			\fill[black] (top) circle (2.5pt);
			\fill[black] (mid) circle (2.5pt);
			\fill[black] (bot) circle (2.5pt);
			
			\draw[thick, black] (mid) edge[-Triangle] (0,0.9);
			
			\draw[thick, red] (mid) circle (5pt);
			\node[thick, red] at (0.5, 0) {$*$};
		}
	}
}

\newcommand{\tikzfivef}{
	\mathord{
		\tikz[baseline={(0,0)}]{
			\useasboundingbox (-0.5,-1.5) rectangle (2,1.5);
			
			\coordinate (top) at (0,1);
			\coordinate (mid) at (0,0);
			\coordinate (bot) at (0,-1);
			
			\fill[gray!30] (top) to[out=-10,in=10,looseness=1.5] (bot) to[out=-10,in=10,looseness=2.8] (top);
			\draw[thick,black] (top) to[out=-10,in=10,looseness=1.5] (bot);
			\draw[thick,black] (bot) to[out=-10,in=10,looseness=2.8] (top);
			\node at (1.2,0) {$G_2$};
			
			\fill[black] (top) circle (2.5pt);
			\fill[black] (mid) circle (2.5pt);
			\fill[black] (bot) circle (2.5pt);
			
			\draw[thick, black] (bot) edge[-Triangle] (0,-0.1);
			
			\draw[thick, red] (top) circle (5pt);
			\node[thick, red] at (-.35, 0) {$*$};
		}
	}
}

\newcommand{\tikzdone}{
	\mathord{
		\tikz[baseline={(0,0)}]{
			\useasboundingbox (-2.25,-2.4) rectangle (2.25,2.4);
			
			\draw[ultra thick, red!30] (-1.4,0.65) to[out=135,in=-90] (-1.75,1) to[out=90,in=90] (1.75,1) -- (1.75,-1) to[out=-90,in=-90] (-1.75,-1) -- (-1.75,0) to[out=90,in=-90] (-0.75,1) to[out=90,in=90] (0.75,1) -- (0.75,-1) to[out=-90,in=-90] (-0.75,-1) -- (-0.75,0) to[out=90,in=-45] (-1.1,0.35); 
			
			\draw[thick, black,fill=yellow!20] (-2,0) rectangle (-0.5,-1) node[pos=.5] {$T_1$};
			\draw[thick, black,fill=gray!30] (2,1) rectangle (0.5,-1) node[pos=.5] {$T_2$};
		}
	}
}

\newcommand{\tikzdtwo}{
	\mathord{
		\tikz[baseline={(0,0)}]{
			\useasboundingbox (-2.25,-2.4) rectangle (2.25,2.4);
			
			\draw[ultra thick, red!30] (-1.4,-0.35) to[out=135,in=-90] (-1.75,0) -- (-1.75,1) to[out=90,in=90] (1.75,1) -- (1.75,-1) to[out=-90,in=-90] (-1.75,-1) to[out=90,in=-90] (-0.75,0) -- (-0.75,1) to[out=90,in=90] (0.75,1) -- (0.75,-1) to[out=-90,in=-90] (-0.75,-1) to[out=90,in=-45] (-1.1,-0.65); 
			
			\draw[thick, black,fill=yellow!20] (-2,1) rectangle (-0.5,0) node[pos=.5] {$T_1$};
			\draw[thick, black,fill=gray!30] (2,1) rectangle (0.5,-1) node[pos=.5] {\reflectbox{$T_2$}};
		}
	}
}

\newcommand{\tikzdthree}{
	\mathord{
		\tikz[baseline={(0,0)}]{
			\useasboundingbox (-2.25,-2.4) rectangle (2.25,2.4);
			
			\draw[ultra thick, red!30] (1.1,-0.35) to[out=135,in=-90] (0.75,0) -- (0.75,1) to[out=90,in=90] (-0.75,1) -- (-0.75,-1) to[out=-90,in=-90] (0.75,-1) to[out=90,in=-90] (1.75,0) -- (1.75,1) to[out=90,in=90] (-1.75,1) -- (-1.75,-1) to[out=-90,in=-90] (1.75,-1) to[out=90,in=135] (1.4,-0.65);
			
			\draw[thick, black,fill=yellow!20] (2,1) rectangle (0.5,0) node[pos=.5] {\reflectbox{$T_1$}};
			\draw[thick, black,fill=gray!30] (-2,1) rectangle (-0.5,-1) node[pos=.5] {$T_2$};
		}
	}
}

\newcommand{\tikzdfour}{
	\mathord{
		\tikz[baseline={(0,0)}]{
			\useasboundingbox (-2.25,-2.4) rectangle (2.25,2.4);
			
				\draw[ultra thick, red!30] (-1.4,-0.35) to[out=135,in=-90] (-1.75,0) -- (-1.75,1) to[out=90,in=90] (1.75,1) -- (1.75,-1) to[out=-90,in=-90] (-1.75,-1) to[out=90,in=-90] (-0.75,0) -- (-0.75,1) to[out=90,in=90] (0.75,1) -- (0.75,-1) to[out=-90,in=-90] (-0.75,-1) to[out=90,in=-45] (-1.1,-0.65); 
			
			\draw[thick, black,fill=yellow!20] (-2,1) rectangle (-0.5,0) node[pos=.5] {\reflectbox{$T_1$}};
			\draw[thick, black,fill=gray!30] (2,1) rectangle (0.5,-1) node[pos=.5] {$T_2$};
		}
	}
}

\begin{document}
	\title[An Alexander Polynomial Refinement]{An Alexander Polynomial Refinement for Alternating Links, with Trapezoidal Properties}
	\author{Joe Boninger}
	\address{Department of Mathematics, Boston College, Chestnut Hill, MA}
	\email{boninger@bc.edu}
	\maketitle
	
	\begin{abstract}
		We define an invariant of alternating links---a homogeneous, four-variable Laurent polynomial---that encodes the symmetrized Alexander polynomial, the signature, and other topological data. Along the way, we extend a spanning tree formulation of the Alexander polynomial due to Murasugi and Stoimenow from special alternating links to all alternating links. This project is motivated by Fox's trapezoidal conjecture; accordingly, we prove certain sequences associated to our invariant are trapezoidal for all alternating links. We also conjecture our polynomial has $M$-convex support, and that it satisfies symmetry and log-concavity properties. We prove a partial symmetry result.
	\end{abstract}
	
	\section{Introduction}
	
	A link $K \subset S^3$ is called {\em alternating} if it admits a diagram such that crossings are met alternately at over- and underpasses when the link is traversed. In this paper we associate a four-variable polynomial invariant $P_K \in \Z[x^{-1},y^{-1},z,w]$ to any oriented alternating link. The invariant $P_K$ has many appealing qualities, including:
	\begin{enumerate}[label=(\roman*)]
		\item $P$ is multiplicative under connect sum, i.e.~$P_{K\#K'} = P_K P_{K'}$ for any two alternating links $K$ and $K'$.
		\item $P_K$ is homogeneous, with degree equal to the link signature $\sigma(K)$.
		\item $P_K$ specializes to the {\em symmetrized} Alexander polynomial $\Delta_K$ of $K$ in three ways:
		\begin{align*}
		\Delta_K(t) &= P_K(t^{-1/2}, -t^{1/2},t^{-1/2},-t^{1/2}) \\
		&= (-t)^{-\sigma(K)/2}P_K(-t,1,-t,1) \\
		&= (-t)^{-\sigma(K)/2}P_K(1,-t,1,-t).
		\end{align*}
		\item If $m(K)$ denotes the mirror of $K$, then 
		$$
		P_{m(K)}(x,y,z,w) = P_K(z^{-1},w^{-1}, x^{-1}, y^{-1}).
		$$
		In particular, if $K$ is amphichiral then $P_K(x,y,z,w) = P_K(z^{-1},w^{-1}, x^{-1}, y^{-1})$.
		\item If $-K$ denotes the link $K$ with the orientations of all components reversed, then
		$$
		P_{-K}(x,y,z,w) = P_K(y,x,w,z).
		$$
		In particular, if $K$ is invertible then $P_K(x,y,z,w) = P_K(y,x,w,z)$.
	\end{enumerate}
	In fact, we conjecture that the last equality of (v) holds for all alternating links.
	
	\begin{conj}
		\label{conj:symmetry}
		For any alternating link $K$,
		$$
		P_K(x,y,z,w) = P_K(y,x,w,z).
		$$
	\end{conj}
	
	We prove a result in this direction.
	
	\begin{thm}
		\label{thm:sym_one}
		For any alternating link $K$,
		$$
		P_K(x,y,1,1) = P_K(y,x,1,1)
		$$
		and
		$$
		P_K(1,1,z,w) = P_K(1,1,w,z).
		$$
	\end{thm}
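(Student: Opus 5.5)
The plan is to reduce both identities to a statement about spanning trees in a single oriented plane graph, and then prove that statement by induction on the number of edges. By the definition of $P_K$ via the extended Murasugi--Stoimenow spanning tree model, $P_K$ for a reduced alternating diagram $D$ of $K$ is a sum over spanning trees $T$ of a Tait graph $G$ of $D$ (with a root vertex $*$). The link orientation orients the edges of $G$ and of its planar dual $G^*$. Each tree contributes a monomial: the variables $x,y$ record how the edges of $T$ are oriented relative to the root (toward versus away), and $z,w$ record the same data for the dual tree $G^*\setminus T^*$. Since the mirror relation (iv) exchanges the roles of $(x,y)$ and $(z,w)$ by passing from $G$ to $G^*$, it suffices to prove the first identity. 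Specializing $z=w=1$ forgets the dual data, so $P_K(x,y,1,1)$ becomes a refined tree-counting polynomial
$$F(G,*;x,y)=\sum_{T}x^{a(T)}y^{b(T)}$$
on the oriented graph $G$ alone. The identity $P_K(x,y,1,1)=P_K(y,x,1,1)$ is then equivalent to $F(G,*;x,y)=F(\rho(G),*;x,y)$, where $\rho$ reverses every edge orientation. This uses the key structural feature of Tait graphs of alternating diagrams: the induced orientation is balanced at each vertex, in the appropriate alternating sense, which is what makes $\rho(G)$ again a legitimate oriented Tait graph.

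Next I would establish root-independence: $F(G,*;x,y)$ does not depend on the choice of $*$. This should follow from the same balancing property, in the spirit of the fact that for an Eulerian digraph the number of in-arborescences at a vertex is independent of that vertex, since all row and column sums of the Laplacian vanish. It may already be implicit in the well-definedness of $P_K$ proved earlier.

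With root-independence available, I would induct on the number of edges using a decomposition at a vertex, as in the pictured configurations. If $G$ has a cut vertex, $G=G_1\cup G_2$ glued at a vertex, then $F$ factors as a product, and rooting both factors at the gluing vertex lets the induction hypothesis apply to each factor. Otherwise, choose an edge $e$ from the root $*$ and split the sum according to whether $T$ contains $e$. Trees containing $e$ are counted by the contraction $G/e$, with a factor $x$ or $y$ depending on the direction of $e$ relative to $*$. Trees avoiding $e$ are counted by $G\setminus e$. One then moves the root across $e$, which swaps the roles of ``toward'' and ``away'' for $e$, and applies the induction hypothesis to the smaller graphs and to $\rho$ of them. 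The two resulting pieces match the corresponding pieces for $\rho(G)$ after exchanging $x$ and $y$.

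The main obstacle I expect is that deletion and contraction destroy the balancing condition, so the induction must be run in a class of oriented plane graphs that is closed under these operations. My first attempt would be to strengthen the statement to plane graphs whose orientation alternates around each vertex, allowing the deformed pieces $G_1$, $\rho(G_1)$, $G_2$ with a marked edge $e$ shown in the figures, and to check the base cases of a single loop and a single edge directly. If that class is not closed, the fallback is to verify the identity face by face using a generalized matrix-tree theorem with Laplacian entries weighted by $x$ and $y$: balancing makes this weighted Laplacian's transpose the Laplacian of $\rho(G)$, and transposition preserves the principal minors.
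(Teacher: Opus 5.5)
Your main route, the deletion--contraction induction, has a genuine gap, and the setup that feeds it is misstated.

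First, the monomials are described wrongly. Only \emph{positive} tree edges carry $x^{-1}$ or $y^{-1}$; negative tree edges contribute $1$. Likewise only negative non-tree edges carry $z,w$.

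Second, the orientation of $G$ is not balanced at each vertex. In an all-negative diagram every vertex is a source or a sink (Lemma \ref{lem:spec_alt}). For the negative Hopf link (two vertices, two parallel negative edges from source to sink), the all-edges count $\sum_T x^{-\iota(T)}y^{-\overline{\iota}(T)}$ rooted at the source is $2x^{-1}$, which is not symmetric, whereas $P(x,y,1,1)=2$. What is balanced is only the positive part: $E_+$ is a disjoint union of directed cycles (Lemma \ref{lem:cut_cyc}(i)).

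Third, the induction breaks at a positive edge $e$. The graph $G\setminus e$ is no longer positively balanced, and the symmetry genuinely fails there. For the positive Hopf link (a directed $2$-cycle), $P(x,y,1,1)=x^{-1}+y^{-1}$, while deleting one edge leaves $y^{-1}$. So there is no inductive hypothesis to apply to $G\setminus e$. Your proposed class of plane digraphs with alternating vertices is not closed under deletion, and it contains no Tait digraph with a negative edge. If you induct only on negative edges, where positive balance survives both deletion and contraction, you just reproduce Lemma \ref{lem:contracting}. That leaves Eulerian digraphs, i.e.\ exactly the content of Theorem \ref{thm:hmv}, and your induction gives no handle on that case.

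Your fallback, by contrast, works once the weights are set correctly, and it gives a genuinely different, self-contained proof. Replace each positive edge $u\to v$ by an arc $u\to v$ of weight $x^{-1}$ and an arc $v\to u$ of weight $y^{-1}$. Replace each negative edge by two opposite arcs of weight $1$.

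Orienting each spanning tree away from $r$ is a bijection onto out-arborescences rooted at $r$, and the arborescence weight is $x^{-\iota_+(T)}y^{-\overline{\iota}_+(T)}$. The directed matrix-tree theorem then gives $P_{(G,r,q)}(x,y,1,1)=\det L^{(r)}$. Here $L=D_{\mathrm{in}}-A$, $A_{uv}$ is the total weight of arcs $u\to v$, $D_{\mathrm{in}}$ and $D_{\mathrm{out}}$ are the diagonal matrices of column and row sums of $A$, and $L^{(r)}$ deletes row and column $r$.

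Swapping $x$ and $y$ replaces $A$ by $A^{\top}$, and hence $D_{\mathrm{in}}$ by $D_{\mathrm{out}}$. By Lemma \ref{lem:cut_cyc}(i),
\[
D_{\mathrm{in}}-D_{\mathrm{out}}=(x^{-1}-y^{-1})\,\mathrm{diag}\big(\mathrm{indeg}_+(v)-\mathrm{outdeg}_+(v)\big)_{v\in V}=0 .
\]
So $L(y,x)=L(x,y)^{\top}$, and the principal minors agree. The second identity follows by mirroring, as you say.

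The paper instead splits trees by $I=T\cap E_-$ and passes to the Eulerian minors $(G/I)\setminus(E_-\setminus I)$ (Lemmas \ref{lem:contracting} and \ref{lem:eulerian}). It then cites Theorem \ref{thm:hmv} for each minor. Your argument handles all negative edges at once by weighting them symmetrically, and needs only the cycle decomposition of $E_+$. In the all-positive case it reproves Theorem \ref{thm:hmv} itself: vanishing row and column sums of $L$ also give root independence. What the paper's decomposition buys is reuse: the same sum over Eulerian minors is what Theorem \ref{thm:trap} needs in order to invoke Theorem \ref{thm:gayu}.
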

	
	An alternating link is called {\em special} if it admits an alternating diagram with only positive crossings. For special alternating links, the polynomial $P_K$ is a certain homogenization of the usual Alexander polynomial $\Delta_K$---see Corollary \ref{cor:special_p} below. However, for the generic case of non-special alternating links, $P_K$ is a much richer invariant than $\Delta_K$. For example, in Section \ref{sec:examples} we show that the figure eight knot $4_1$ and eight-crossing alternating knot $8_{10}$ have
	$$
		P_{4_1} = 1 + x^{-1}z + y^{-1}z + x^{-1}w + y^{-1}w
	$$
	and
	\begin{align*}
		P_{8_{10}} &= (x^{-2} + x^{-1} y^{-1} + y^{-2})(w^4+w^3z+w^2z^2+wz^3+z^4) \\
		&+ (x^{-1} + y^{-1})(w+z)(w^2+wz+z^2).
	\end{align*}
	The first polynomial satisfies
	$$
	P_{4_1}(x,y,z,w) = P_{4_1}(z^{-1},w^{-1},x^{-1},y^{-1})
	$$
	while the second does not, reflecting the fact that $8_{10}$ is chiral.
	
	In Section \ref{sec:dimer_det} we explain how $P$ can be easily computed as the determinant of a matrix similar to the Fox matrix of a Dehn presentation of a link group. In spite of this simple formulation, it is remarkable to us that $P$ is a link invariant. The proof of invariance takes up one third of our paper.
	
	\subsection{Fox's trapezoidal conjecture}
	
	The problem of classifying alternating links was effectively solved with Menasco and Thistlethwaite's resolution of the flyping conjecture \cite{meth91}. Thus, the reader may want a reason to care about $P$ beyond the five properties listed above. The author's discovery of $P$ came from researching Fox's trapezoidal conjecture; for this, if a link $K$ has Alexander polynomial
	$$
		\Delta_K(t) = \sum_{i = 0}^n a_i t^i,
	$$
	then define its {\em unsigned Alexander polynomial} $|\Delta|_K$ by
	$$
		|\Delta|_K(t) = \sum_{i = 0}^n |a_i| t^i.
	$$
	Fox's conjecture states:
	
	\begin{conj}[\cite{fox61}]
		\label{conj:fox}
		For any alternating link $K$, the sequence of coefficients of $|\Delta|_K$ is {\em trapezoidal}. In other words, there exist indices $0 \leq i_1 \leq i_2 \leq n$ such that
		$$
		|a_0| < |a_1| < \dots < |a_{i_1}| = |a_{i_1 + 1}| = \cdots = |a_{i_2}| > \cdots > |a_{n - 1}| > |a_n|.
		$$
	\end{conj}
	
	A strengthening of Conjecture \ref{conj:fox}, due to Stoimenow, suggests the sequence is actually {\em log-concave}:
	
	\begin{conj}[\cite{sto05}]
		\label{conj:lc}
		For any alternating link $K$, the sequence of coefficients of $|\Delta|_K$ satisfies
		$$
		|a_i|^2 \geq |a_{i - 1}||a_{i + 1}|
		$$
		for all $i = 1, \dots, n - 1$. Furthermore, none of the $a_i$ are zero.
	\end{conj}
	
	It is an exercise to show Conjecture \ref{conj:lc} implies Conjecture \ref{conj:fox}.
	
	Fox's conjecture has received renewed attention in recent years due to groundbreaking work of June Huh and others on log-concave sequences in combinatorics. In particular, in 2020 Br\"and\'en and Huh defined {\em Lorentzian polynomials}, which are multi-variable, homogeneous polynomials satisfying strong log-concavity properties \cite{brhu20} (see also \cite{agv23, algv24, alogv24}). This theory has led to two proofs of Conjectures \ref{conj:fox} and \ref{conj:lc} for special alternating links, due to Hafner-M\'esz\'aros-Vidinas \cite{hmv24} and K\'alm\'an-M\'esz\'aros-Postnikov \cite{kmp25}, which follow the same strategy:
	\begin{enumerate}[label=\arabic*.]
		\item For any special alternating link $K$, define a multi-variable, homogeneous polynomial $p_K$ which specializes to $|\Delta|_K$.
		\item Prove that $p_K$ is {\em denormalized Lorentzian}.
	\end{enumerate}
	Following step two, one can use properties of Lorentzian polynomials to conclude that $|\Delta|_K$ is log-concave. Azarpendar, Juh\'asz and K\'alm\'an \cite{ajk24} also use a multi-variable Alexander polynomial refinement to prove trapezoidal-type inequalities for alternating 3-braids, though their polynomial is neither denormalized Lorentzian nor a link invariant in general.
	
	Our polynomial $P$ arose from an attempt to generalize K\'alm\'an, M\'esz\'aros, and Postnikov's work \cite{kmp25} from special alternating links to all alternating links. In this we were unsuccessful: although $P_K$ is a multi-variable, homogeneous polynomial specializing to $|\Delta|_K$, it is not always denormalized Lorentzian.\footnote{Technically, denormalized Lorentzian polynomials do not have negative exponents. Therefore, to make this discussion well defined, it is necessary to first multiply the polynomial $P_K$ by $x^ky^k$ for some sufficiently high $k$. The choice of (large enough) $k$ does not affect whether the result is denormalized Lorentzian, by \cite[Corollary 3.8]{brhu20}. We also omit the definition of (denormalized) Lorentzian polynomials since we do not discuss them after this, but refer the reader to \cite[Section 2.3]{hmv24} for a short summary.} Of the 563 alternating knots with eleven or fewer crossings, 138 have polynomials $P$ which are not denormalized Lorentzian---the polynomial $P_{8_{10}}$ above is one such failure. In spite of this, we prove certain sequences associated to $P$ are trapezoidal for {\em all} alternating links.
	
	\begin{thm}
		\label{thm:trap}
		For any alternating link $K$, each of the following sequences is trapezoidal:
		\begin{itemize}
			\item The even-degree coefficients of the polynomial $P_K(t^{-1}, t,1,1)$.
			\item The odd-degree coefficients of the polynomial $P_K(t^{-1}, t,1,1)$.
			\item The even-degree coefficients of the polynomial $P_K(1, 1,t^{-1},t)$.
			\item The odd-degree coefficients of the polynomial $P_K(1, 1,t^{-1},t)$.
		\end{itemize}
	\end{thm}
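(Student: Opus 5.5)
The plan is to decompose $P_K(t^{-1},t,1,1)$ into a sum of pieces, each symmetric about degree $0$ and trapezoidal, and then show that such a sum stays trapezoidal. The other two bullets follow by the mirror relation (iv), since $P_K(1,1,t^{-1},t)=P_{m(K)}(t,t^{-1},1,1)$, and then by the symmetry of Theorem~\ref{thm:sym_one} applied to $m(K)$, which gives $P_{m(K)}(t,t^{-1},1,1)=P_{m(K)}(t^{-1},t,1,1)$. So it suffices to treat $Q(t)=P_K(t^{-1},t,1,1)$.

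\textbf{Splitting by degree.} Write $P_K(x,y,1,1)=\sum_d R_d(x^{-1},y^{-1})$, where $R_d$ is the part of total degree $d$ in $(x^{-1},y^{-1})$. A monomial $x^{-a}y^{-b}$ with $a+b=d$ becomes $t^{a-b}$. Since $a-b\equiv d \pmod 2$, the even-degree coefficients of $Q$ come only from the $R_d$ with $d$ even, and the odd-degree ones only from the $R_d$ with $d$ odd. Moreover, $R_d(t,t^{-1})$ is supported in degrees $-d,-d+2,\dots,d$, and its coefficient sequence is exactly the coefficient sequence of the homogeneous bivariate polynomial $R_d$. By Theorem~\ref{thm:sym_one}, $P_K(x,y,1,1)$ is symmetric in $x$ and $y$, so every $R_d$ is symmetric and each $R_d(t,t^{-1})$ is palindromic about degree $0$.

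\textbf{Each piece is trapezoidal.} This is the step I expect to be the main obstacle. I would use the spanning tree/state sum formula for $P$ from the body of the paper, which extends Murasugi--Stoimenow. Setting $z=w=1$ should kill the weights attached to one class of crossings (say the negative ones, or one colour of the checkerboard graph). The goal is to regroup the sum so that each $R_d$ is a nonnegative combination of homogenized Alexander polynomials of special alternating links. These would come from smoothing or contracting the forgotten crossings, and Corollary~\ref{cor:special_p} identifies such homogenizations with $P$ of a special alternating link. For special alternating links, the coefficients are known to be trapezoidal, indeed log-concave without internal zeros, by \cite{hmv24,kmp25}. Each such homogenized polynomial is also symmetric, hence centred at $0$ after the substitution. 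If the regrouping only yields each $R_d$ as a sum of several centred, symmetric, trapezoidal terms, the next step handles that case too.

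\textbf{Summing symmetric trapezoidal sequences.} Let $s_1,\dots,s_m$ be nonnegative sequences, each palindromic about $0$ and trapezoidal on its (interval) support $[-d_j,d_j]$ in steps of $2$, all of the same parity. I claim $\sum_j s_j$ is trapezoidal. Moving from the left end toward $0$, each $s_j$ is nondecreasing, because zeros outside the support precede a positive entry, and it is strictly increasing until it reaches its plateau, which then persists through $0$ by symmetry. Hence the set of indices $j$ still strictly increasing shrinks as we move toward $0$. The sum is therefore strictly increasing while this set is nonempty, and constant once it is empty; the right half follows by symmetry. Applying this first within each $R_d$ (if needed) and then across all $d$ of a fixed parity proves the theorem.

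The real content is in the previous step: producing the decomposition of the $z=w=1$ specialization into special alternating pieces with all coefficients nonnegative. That is where I would spend the effort, starting from the dimer/determinant description of Section~\ref{sec:dimer_det}.
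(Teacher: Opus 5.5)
Your reduction to $P_K(t^{-1},t,1,1)$ and the parity split are fine. Your grouping is also the paper's: since $\iota_+(T)+\overline{\iota}_+(T)=|T\cap E_+|=\mathrm{rk}(G)-|T\cap E_-|$, your $R_d$ is the paper's $R_k$ with $k=\mathrm{rk}(G)-d$. There are, however, two genuine gaps.

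\textbf{The central step is unproved, and the route you sketch does not work as stated.} At $z=w=1$, fixing $I=T\cap E_-$ gives, by Lemma \ref{lem:contracting}, the Murasugi--Stoimenow polynomial $R_{(G_I,\overline{r})}$ of $G_I=(G/I)\setminus(E_-\setminus I)$.
\begin{itemize}
\item Deleting the negative edges outside $I$ is the oriented resolution.
\item Contracting those in $I$ is the smoothing that is incompatible with the orientation. Around the head of a negative edge $e$, the nearest positive edges on either side of $e$ occur in the opposite in/out order to the one at its tail. So once both endpoints of $e$ carry positive edges, contracting $e$ makes two incoming (and two outgoing) positive edges adjacent in the rotation at the merged vertex.
\end{itemize}
Hence $G_I$ is Eulerian (Lemma \ref{lem:eulerian}), but it need not be an alternating plane digraph, i.e.\ the Tait digraph of a special alternating diagram (Lemma \ref{lem:spec_alt}). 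The results of \cite{hmv24,kmp25} therefore do not apply. What is actually needed is trapezoidality of $R$ for arbitrary Eulerian digraphs, which is the Gao--Yuan theorem (Theorem \ref{thm:gayu}). You would also need symmetry of each individual piece (Theorem \ref{thm:hmv}), because Theorem \ref{thm:sym_one} only makes the total $R_d$ symmetric, while your summation step requires every summand to be palindromic.

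\textbf{Your summation lemma is false when the supports are nested with gaps.} A summand with smaller support is constant (zero) before its support begins and only then starts increasing, so the set of strictly increasing summands need not shrink. For example, with step $2$, the symmetric trapezoidal sequences $1,2,2,2,2,2,2,2,1$ on $[-8,8]$ and $1,1,1$ on $[-2,2]$ sum to $1,2,2,3,3,3,2,2,1$, which is not trapezoidal.

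The paper's summation across $d$ works because of two facts you have not supplied:
\begin{itemize}
\item Each piece has full support $[-d,d]$. For $R_{(G_I,\overline{r})}$ this comes from the BEST theorem \cite{eb51,tusm41}, which gives spanning trees with $\iota=0$ and with $\iota=\mathrm{rk}(G_I)$.
\item The values $|T\cap E_-|$ over spanning trees $T$ form an interval, by basis exchange, so every intermediate $d$ actually occurs.
\end{itemize}
Then consecutive supports of a fixed parity differ by exactly one step at each end, and \cite[Proposition 2.1]{mur85} can be applied inductively.
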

	The full coefficient sequences of $P_K(t^{-1},t,1,1)$ and $P_K(1,1,t^{-1},t)$ need not be trapezoidal---for example, the polynomial $P_{4_1}$ above has $P_{4_1}(t^{-1},t,1,1) = 2t^{-1} + 1 + 2t$.
	
	While there is no straight line from Theorem \ref{thm:trap} to Conjecture \ref{conj:fox}, the specializations in Theorem \ref{thm:trap} are similar to the specialization
	$$
	P_K(t^{-1},t,t^{-1},t) = |\Delta|_K(t^2)
	$$
	which we prove below. Theorem \ref{thm:trap} also implies some special cases of Fox's conjecture, though many of these have already been verified by Azarpendar, Juh\'asz and K\'alm\'an  \cite[Theorem 2.22]{ajk24}. For example, the polynomial $P_{8_{10}}$ has the form
	$$
	P_{8_{10}}(x,y,z,w) = Q_\text{even}(x,y)R_\text{even}(z,w) + Q_\text{odd}(x,y)R_\text{odd}(z,w),
	$$
	where $Q_\text{even}$ and $Q_\text{odd}$ are precisely the even and odd degree terms of $P_{8_{10}}(x,y,1,1)$, and $R_\text{even}$ and $R_\text{odd}$ are the even and odd degree terms of $P_{8_{10}}(1,1,z,w)$. Then
	$$
	|\Delta|_{8_{10}}(t^2) = Q_\text{even}(t^{-1},t)R_\text{even}(t^{-1},t) + Q_\text{odd}(t^{-1},t)R_\text{odd}(t^{-1},t).
	$$
	Since each polynomial on the right is trapezoidal and symmetric about zero (by Theorems \ref{thm:sym_one} and \ref{thm:trap}, or by inspection), one can show that $|\Delta|_{8_{10}}$ is as well, proving Fox's conjecture in this case. Unfortunately, the polynomial $P$ does not decompose this way for most alternating links.
		
	In addition to Theorem \ref{thm:trap}, we conjecture $P$ shares two key properties with denormalized Lorentzian polynomials---the first is {\em $M$-convex support}. The {\em support} of a Laurent polynomial $P(x,y,z,w)$ is the set of points
	$$
	\{(a,b,c,d) \mid x^ay^bz^cw^d \text{ appears with nonzero coefficient in $P$}\} \subset \Z^4.
	$$
	Separately, a subset $J \subset \Z^n$ is called {\em M-convex} if for any index $i$ and any $\alpha, \beta \in J$ whose $i$th coordinates satisfy $\alpha_i > \beta_i$, there is an index $j$ satisfying
	\begin{equation}
		\label{eq:m_convex}
		\alpha_j < \beta_j, \ \alpha - e_i + e_j \in J, \text{ and } \beta - e_j + e_i \in J,
	\end{equation}
	where $e_i$ denotes the $i$th standard basis vector. $M$-convex sets are closely related to generalized permutahedra in combinatorics, and (\ref{eq:m_convex}) can also be thought of as generalizing the basis exchange axiom from matroid theory. We posit:
	
	\begin{conj}
		\label{conj:m_convex}
		For any alternating link $K$, the polynomial $P_K$ has $M$-convex support in $\Z^4$.
	\end{conj}
	
	Second, $P$ appears to satisfy a log-concavity property.
	
	\begin{conj}
		\label{conj:lc_me}
		Let $K$ be an alternating link, and given $\alpha \in \Z^4$ let $c_\alpha$ denote the coefficient of $x^{\alpha_1}y^{\alpha_2}z^{\alpha_3}w^{\alpha_4}$ in $P_K$. Then for any such $\alpha$ and any $i, j \in \{1,2,3,4\}$, we have
		$$
		c^2_\alpha \geq c_{\alpha + e_i - e_j} c_{\alpha - e_i + e_j}.
		$$
	\end{conj}
	
	The properties in Conjectures \ref{conj:m_convex} and \ref{conj:lc_me} are both satisfied by denormalized Lorenztian polynomials: the first is part of the definition, and the second is \cite[Proposition 4.4]{brhu20}. We have also verified these conjectures, along with Conjecture \ref{conj:symmetry}, for all alternating knots with eleven crossings or fewer. This preponderance of evidence leads us to believe studying $P$ could yield insight into Fox's trapezoidal conjecture, and more broadly into the combinatorial and geometric structure of alternating links.
	
	\subsection{Overview and additional findings}
	
	In \cite{must03}, Murasugi and Stoimenow give a graph-theoretic interpretation of the Alexander polynomial of special alternating links. Let $K$ be a link with special alternating diagram $D$, with Tait graph $G \subset \R^2$---see below for definitions. Since $D$ is special alternating $G$ can be chosen to be Eulerian, and we fix orientations on $G$'s edges so they alternate between incoming and outgoing around each vertex in the plane. We also choose a root vertex $r$.
	
	Let $\mathcal{T}$ be the set of spanning trees of $G$ and given $T \in \mathcal{T}$, let $\iota(T)$ be the number of edges in $T$ which point away from the root $r$. Then Murasugi and Stoimenow prove:
	$$
	\Delta_K(t) = \sum_{T \in \mathcal{T}} (-t)^{\iota(T)}.
	$$
	K\'alm\'an, M\'esz\'aros and Postnikov \cite{kmp25} reinterpret $\iota(T)$ as an {\em activity measure}: an edge of a tree is {\em internally semi-active} if it points away from $r$, and {\em internally semi-passive} otherwise.
	
	Before defining our invariant $P$, we extend Murasugi and Stoimenow's result to all alternating links. To do so, we introduce a canonical orientation on the Tait graph of any alternating link diagram. We then recall a notion of {\em external semi-activity} $\varepsilon$ for edges not contained in spanning trees, which also appears in \cite{kmp25}. While internal semi-activity relies on a choice of root, our external semi-activity depends on a basepoint in $\R^2$. We then prove:
	
	\begin{thm}
		\label{thm:alex_ids}
		Let $K$ be a link with alternating diagram $D$, and let $G \subset \R^2$ be its Tait graph. Orient $G$ as described in Section \ref{sec:tait_digraph}, and fix a root vertex for $G$ and a basepoint in an adjacent face. Then the {\em symmetrized} Alexander polynomial of $K$, $\Delta_K$, is given by
		\begin{align*}
		\Delta_K(t) &= \Delta_K(t) = \sum_{T \in \mathcal{T}} (t^{1/2})^{\iota_+(T) - \varepsilon_-(T)} (-t^{1/2})^{\overline{\varepsilon}_-(T) - \overline{\iota}_+(T)} \\
		&= (-t)^{\sigma(K)/2} \sum_{T \in \mathcal{T}} (-t)^{\varepsilon_-(T) - \iota_+(T)} \\
		&= (-t)^{\sigma(K)/2} \sum_{T \in \mathcal{T}} (-t)^{\overline{\varepsilon}_-(T) - \overline{\iota}_+(T)},
		\end{align*}
		where $\sigma(K)$ is the signature of $K$.
	\end{thm}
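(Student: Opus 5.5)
We prove Theorem~\ref{thm:alex_ids} by a determinant expansion, following the template of Murasugi and Stoimenow. Start from a Fox-type (Alexander-type) matrix for $D$ whose rows and columns are indexed by the vertices and faces of the Tait graph $G$; the paper's Section~\ref{sec:dimer_det} describes a matrix of this kind. Concretely, we would use the Kauffman state-sum model, in which $\Delta_K(t)$ is a sum over states (bijections between crossings and regions, omitting two adjacent regions). There is a classical bijection between Kauffman states of an alternating diagram and spanning trees of its Tait graph $G$. Under it, each crossing, which is an edge $e$ of $G$, is assigned to a region: either a vertex of $G$ (if $e\in T$) or a face of $G$ (if $e\notin T$). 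Fix the two omitted regions to be the root vertex $r$ and the face containing the basepoint; they are adjacent, as the hypothesis requires.

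\textbf{Step 1: tree edges.} For $e\in T$, the edge is assigned to the endpoint farther from $r$ in $T$. Whether that endpoint is its head or its tail is exactly whether $e$ points toward or away from $r$. Together with the canonical orientation of Section~\ref{sec:tait_digraph}, this orientation also records the sign of the crossing and whether the state marker sits in a quadrant labeled $t^{1/2}$ or $-t^{-1/2}$ (or $1$). The purpose of the canonical orientation is precisely to make these local weights depend only on (tree/non-tree, direction relative to $r$ or basepoint, crossing sign). The tree-edge counts $\iota_+$ and $\overline{\iota}_+$ then arise as the numbers of tree edges of a given sign pointing away from and toward $r$.

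\textbf{Step 2: non-tree edges.} For $e\notin T$, the dual edge lies in the dual spanning tree rooted at the basepoint face. The same reasoning applied in the dual gives the external semi-activities $\varepsilon_-$ and $\overline{\varepsilon}_-$, which count negative non-tree edges according to the side of the dual edge facing the basepoint.

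\textbf{Step 3: assemble.} Multiplying the local Kauffman weights gives the first formula, once the positive tree and negative non-tree edges contribute $1$ and the sign conventions are checked. We would verify this on a single crossing of each type, using the checkerboard convention tied to the orientation. The global sign and the power of $t$ are absorbed by the symmetrization, since the Kauffman sum, suitably normalized, equals the Conway-normalized $\Delta_K$.

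\textbf{Step 4: the other two formulas.} These follow by factoring out the exponents. We need $\iota_++\overline{\iota}_+$ and $\varepsilon_-+\overline{\varepsilon}_-$ to satisfy
$$
(\iota_+-\varepsilon_-)+(\overline{\varepsilon}_- - \overline{\iota}_+) \;=\; \text{const},
$$
independent of $T$. Indeed, every positive edge of $G$ is either in $T$ (counted by $\iota_+$ or $\overline{\iota}_+$) or not, and similarly for negative edges. The key input is Traczyk's formula for the signature of an alternating diagram,
$$
\sigma(K)=s_A(D)-n_+(D)-1
$$
(up to convention), where $s_A$ is the vertex count of the Tait graph in the appropriate checkerboard. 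Since $|T|=|V(G)|-1$ and $|E\setminus T|=|F(G)|-1$, the quantities $\iota_++\overline{\iota}_+ + \varepsilon_-+\overline{\varepsilon}_-$ reduce to the count $|V|-1$ minus the negative tree edges, plus the analogous face term. Substituting $t^{1/2}\cdot(-t^{1/2})^{-1}=-1$-type identities then yields the factor $(-t)^{\sigma(K)/2}$ in each case.

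\textbf{Main obstacle.} The hardest step is Step 1: showing that the canonical orientation makes the Kauffman weight of each tree edge depend only on its direction relative to $r$, and dually for non-tree edges relative to the basepoint. Doing so requires checking the four local crossing configurations against the orientation rule, and it is also where the exact sign bookkeeping that produces $\sigma(K)$ must come out right. A fallback is induction on crossings via deletion–contraction of $G$, paired with the skein relation. This is less natural, because the orientation changes under smoothing.
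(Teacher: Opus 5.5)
Your route is the paper's: Kauffman's state sum (the dimer model of Theorem~\ref{thm:kauff}), the leaf-half bijection between states and spanning trees (Theorem~\ref{thm:kpw}), a local weight check, and a signature formula. With the symmetric weights of Figure~\ref{fig:edge_weights}, that state sum already equals $\Delta_K$ up to sign, so there is no power of $t$ to absorb; this is consistent with the first formula having no prefactor.

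\textbf{Step 3 reverses the roles.} The local check is Lemma~\ref{lem:weight_comp} and Figure~\ref{fig:sign_comparison}.
At a positive crossing, the two halves of $e$ carry $t^{1/2}$ (head) and $-t^{-1/2}$ (tail), and the halves of $e^*$ carry $1$.
At a negative crossing, the halves of $e^*$ carry $t^{-1/2}$ (head) and $-t^{1/2}$ (tail), and those of $e$ carry $1$.
So the edges contributing $1$ are the \emph{negative} tree edges and the \emph{positive} non-tree edges. Positive tree edges and negative non-tree edges carry the nontrivial weights, which is exactly why $\iota_+,\overline{\iota}_+,\varepsilon_-,\overline{\varepsilon}_-$ appear. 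Step 2 also needs the planar-duality statement of Lemma~\ref{lem:int_ext}(i): $e^*$ points away from $q$ in $T^*$ if and only if $e$ is externally semi-active with respect to $T$ and $q$.

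\textbf{Step 4 is also wrong as written.} The quantity you call constant, $(\iota_+-\varepsilon_-)+(\overline{\varepsilon}_- - \overline{\iota}_+)$, is twice the $t$-exponent of the summand, so it varies with $T$. The same is true of $\iota_++\overline{\iota}_++\varepsilon_-+\overline{\varepsilon}_-$, which takes the values $0$ and $2$ for $4_1$. Set $a=\iota_+-\varepsilon_-$ and $b=\overline{\varepsilon}_- - \overline{\iota}_+$. The $T$-independent quantity is the difference
$b-a=|E_-\setminus T|-|T\cap E_+|=|E_-|-\text{rk}(G)=\text{corank}(G)-|E_+|$.
The signature formula identifies this with $\sigma(K)$; this is Lemma~\ref{lem:murasugi}.

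Traczyk's formula is an acceptable substitute for the paper's Gordon--Litherland argument, provided you do two things. First, match sign conventions. Second, handle non-reduced diagrams, for instance by noting that $\text{corank}(G)-|E_+|$ is unchanged when a nugatory crossing (a negative bridge or positive loop of $G$) is removed.

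With $b-a=\sigma$, each summand $(t^{1/2})^a(-t^{1/2})^b$ equals both $t^{-\sigma/2}(-t)^b$ and $(-1)^{\sigma}t^{\sigma/2}(-t)^a$. The third formula follows directly, and the second after applying $\Delta_K(t)=\Delta_K(t^{-1})$, both up to sign. The paper avoids this sign bookkeeping by passing through $|\Delta|_K$ and Lemma~\ref{lem:unsigned_delt}.

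\textbf{The prefactor.} This computation produces $(-t)^{-\sigma(K)/2}$, as in Theorem~\ref{thm:activity_alex}. The $(-t)^{\sigma(K)/2}$ in the displayed statement is a misprint, so your claim that Step 4 yields it cannot be right. For the left trefoil of Figure~\ref{fig:ditait}, all edges are negative, the trees contribute $z^2+zw+w^2$, and $\sigma=2$. Then $(-t)\sum_T(-t)^{\varepsilon_-(T)}=-t^3+t^2-t$ is not symmetric, while $(-t)^{-1}(t^2-t+1)$ is.
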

	
	The quantities $\iota_+$ and $\overline{\iota}_+$ count internally semi-active and semi-passive edges through positive crossings of $D$, while $\varepsilon_-$ and $\overline{\varepsilon}_-$ count externally semi-active and semi-passive edges through negative crossings; see Definition \ref{def:plus_minus_acts} and the preceding discussion. Theorem \ref{thm:alex_ids} may be known to experts, as it isn't far from Kauffman's spanning tree formulation of the Alexander polynomial \cite{kauf83}. To our knowledge, however, our canonically oriented Tait graph has not appeared in this context before.
	
	In Section \ref{sec:act_poly} we define our polynomial $P_K$ as follows. Let $K$ be a link with alternating diagram $D$, and $G$ its (directed) Tait graph. Fix a root vertex for $G$ and an adjacent basepoint; then:
	$$
	P_K(x,y,z,w) = \sum_{T \in \mathcal{T}} x^{-\iota_+(T)} y^{- \overline{\iota}_+(T)} z^{\varepsilon_-(T)} w^{\overline{\varepsilon}_-(T)} \in \Z[x^{-1},y^{-1},z,w].
	$$
	The five properties of $P$ listed above follow from this definition.
	
	To prove $P$ is a well-defined link invariant we must show it depends neither on the choice of adjacent root and basepoint, nor on the alternating diagram. Our proof of the former uses a determinant formulation of $P$, while for diagram independence we employ the flyping theorem \cite{meth91}.
	
	Finally, our proofs of Theorems \ref{thm:sym_one} and \ref{thm:trap} rely on results of Hafner-M\'esz\'aros-Vidinas \cite[Theorem 1.3]{hmv25} and Gao-Yuan \cite[Corollary 1.3]{gayu26} respectively. The latter is a recent and deep theorem, whose proof relies on the $g$-theorem from polytope theory \cite{stan80, zie95}.
	
	\subsection{Outline}
	
	Section \ref{sec:back} contains background information. In Section \ref{sec:trees} we prove Theorem \ref{thm:alex_ids}, and in Section \ref{sec:def_props} we define the polynomial $P$ and verify the properties (i)--(v). In Section \ref{sec:dimer_det} we explain how $P$ can be defined as a determinant and as a weighted dimer count, and in Section \ref{sec:examples} we discuss the examples of the knots $4_1$ and $8_{10}$.
	
	We prove $P$ does not depend on the choice of root and basepoint in Section \ref{sec:root_indep}, and that $P$ does not depend on the choice of alternating diagram in Section \ref{sec:link_invar}. Finally, in Section \ref{sec:symmetry} we prove Theorems \ref{thm:sym_one} and \ref{thm:trap}.
	
	\subsection{Computer code}
	
	The author is happy to provide the Python programs used to check Conjectures \ref{conj:symmetry}, \ref{conj:m_convex} and \ref{conj:lc_me} over email, as well as a program which computes the polynomial $P$ from a planar diagram code. These programs were written primarily by Claude Sonnet 5, and tested on information from the KnotInfo database \cite{knotinfo}.
	
	\subsection{Acknowledgments}
	
	The author thanks Josh Greene and John Baldwin for encouraging him to think about log-concavity problems, and Matt Harper for a helpful conversation about Lorentzian polynomials.

	\section{Background and Conventions}
	\label{sec:back}
	
	\subsection{Graph theory}
	
	A {\em digraph} is a directed graph, and a {\em plane graph} (or digraph) is a planar graph with a fixed embedding in $\R^2$. Given a plane digraph $G$, we specify a digraph structure on its planar dual $G^*$ by orienting its edges according to the convention in Figure \ref{fig:dual_o}. For any edge $e$ or set of edges, we denote by $G / e$ the graph obtained by contracting $e$; similarly, $G \setminus e$ is the deletion of $e$ from $G$. If deleting a set of edges produces an isolated vertex, then we implicitly delete that vertex as well.
	
	\begin{figure}
		\begin{tikzpicture}
			\draw[thick, -Triangle] (-1,0) -- (1,0);
			\node at (-1,0.2) {$e$};
			
			\draw[thick, dashed, -Triangle] (0,1) -- (0,-1);
			\node at (0.3,1) {$e^*$};
		\end{tikzpicture}
	
		\caption{Orienting dual edges}
		\label{fig:dual_o}
	\end{figure}
	
	Finally, we recall that a {\em spanning tree} of a graph $G = (V, E)$ is a maximal edge set which contains no cycles. Any two spanning trees of $G$ have the same size, which is called the {\em rank} of $G$ and denoted rk$(G)$. If $G$ is a plane graph, then $T$ is a spanning tree of $G$ if and only if its complement $E \setminus T$ is a spanning tree of $G^*$, making the natural identification of their edge sets. The quantity $|E \setminus T|$ is called the {\em corank} of $G$, and coincides with rk$(H_1(G))$.
	
	\subsection{Link diagrams, shadings and the Tait graph}
	
	We use the usual convention on crossing signs in link diagrams, shown in Figures \ref{fig:pos} and \ref{fig:neg}. We also assume:
	
	\begin{conv}
		\label{conv:split}
		All links are oriented and non-split unless stated otherwise.
	\end{conv}
	
	Let $D \subset \R^2$ be a link diagram, and $p(D) \subset \R^2$ the four-valent graph obtained by forgetting crossing information. The components of $\R^2 \setminus p(D)$, called {\em regions} of $D$, may be {\em checkerboard colored} in two possible ways by selectively shading them so that no pair of shaded regions or unshaded regions share a common boundary edge. Coloring a diagram this way endows each crossing with a {\em type}, either {\em type I} or {\em type II}, as shown in Figures \ref{fig:I} and \ref{fig:II}.\footnote{The type I and II terminology is usually used to mean something different (cf.~\cite[Figure 2]{gl78}), while our type I and II crossings are called type $a$ and $b$ crossings in \cite{g17}.} It's a useful fact, and straightforward to check, that alternating link diagrams are characterized by having only one crossing type.
	
	\begin{lemma}
		\label{lem:alt_crossing_signs} A (non-split) link diagram is alternating if and only if, after choosing a checkerboard shading, every crossing has the same type.
	\end{lemma}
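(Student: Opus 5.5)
The plan is to reduce the statement to a local computation at a single crossing, and then propagate it along edges of the diagram using connectivity.

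\textbf{Local computation.} Fix a checkerboard shading of $D$. Near a crossing $c$, the four regions meeting at $c$ alternate shaded/unshaded. The type of $c$ (Figures \ref{fig:I} and \ref{fig:II}) is determined by which over/under configuration the crossing has relative to the shaded regions. Concretely: rotate so that the overstrand runs horizontally. The type records whether the shaded regions lie in the quadrants swept by turning the overstrand counterclockwise onto the understrand, or in the other pair. This description is independent of orientations, since reversing a strand's direction preserves the pair of quadrants swept.

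\textbf{Forward direction.} Suppose $D$ is alternating, and let $c, c'$ be consecutive crossings joined by an edge $\epsilon$ of $p(D)$. By alternation, the strand along $\epsilon$ is over at one end and under at the other. Say it is over at $c$. Let $R$ be the shaded region adjacent to $\epsilon$, which lies on a fixed side of $\epsilon$, say the left when traveling from $c$ to $c'$.
\begin{itemize}
\item At $c$, the strand through $\epsilon$ is the overstrand, and $R$ lies to its left.
\item At $c'$, the strand through $\epsilon$ is the understrand, and $R$ again lies to its left.
\end{itemize}
Checking the local picture shows these two situations give the same type: ``shaded region on the left of the overstrand'' at one end corresponds to ``shaded region on the left of the understrand'' at the other. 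This is the single check to be done against the figures, and I would draw the two crossings explicitly. Since $D$ is non-split, $p(D)$ is connected (a connected 4-valent graph, possibly with a crossingless unknot component handled trivially). Propagating along edges therefore shows that all crossings have the same type.

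\textbf{Converse.} The same local check, read backwards, shows the following. If $c$ and $c'$ are joined by $\epsilon$ and have the same type, then the strand along $\epsilon$ changes from over to under, because the shaded region $R$ is on the same side at both ends. If it stayed over at both ends, the types would differ. Hence, if all crossings share a type, every edge goes over-to-under or under-to-over. That is, crossings are met alternately along every component, so $D$ is alternating.

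\textbf{Main obstacle.} The only real content is the local consistency check: showing that moving from the overstrand at one end of an edge to the understrand at the other end preserves the type relative to a fixed adjacent shaded region. I would verify this with a single picture, using the figures' conventions. I also expect the degenerate case of a crossingless diagram, or of edges forming loops at a single crossing, to need a brief remark. A loop at one crossing still joins that crossing to itself with over and under ends, which is consistent.
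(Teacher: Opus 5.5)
Your proposal is correct. The paper gives no written proof of this lemma; it only remarks that the fact is ``straightforward to check.'' Your argument, a local rule at each crossing propagated along the edges of the connected $4$-valent graph $p(D)$, is a natural way to carry out that check.

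The one computation you deferred does work. Orient $\epsilon$ from $c$ to $c'$ with the shaded region $R$ on its left, and draw $\epsilon$ heading east at both ends, so $R$ lies to the north.
\begin{itemize}
\item \emph{Shaded quadrants.} These depend only on where $R$ sits. At $c$ the shaded quadrants are NE and SW; at $c'$ they are NW and SE.
\item \emph{Overstrand direction.} If $\epsilon$ is over at $c$, the overstrand at $c$ is horizontal. If $\epsilon$ is under at $c'$, the overstrand at $c'$ is vertical.
\item \emph{Comparison.} In both cases the shaded quadrants are exactly those swept by turning the overstrand counterclockwise onto the understrand, so the two types agree.
\end{itemize}

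Swapping over and under at a single end swaps that crossing's type. So the clean form of your rule is: $c$ and $c'$ have the same type if and only if the strand switches between over and under along $\epsilon$. This gives both directions of the lemma at once. Applied with $c = c'$, it also forces a loop edge to switch, so loops need no separate argument.

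Note that the non-split hypothesis is used only in the forward direction, and there it is genuinely needed. A split diagram made of disjoint left- and right-handed trefoil diagrams is alternating but has crossings of both types. The converse, by contrast, holds for every diagram.
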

	
	\begin{figure}
		\centering
		
		\begin{subfigure}[t]{0.22\textwidth}
			\centering
			\begin{tikzpicture}
				\draw[-{Stealth}, ultra thick, red] (-1,-1) -- (1,1);
				\draw[ultra thick, red] (1,-1) -- (0.15,-0.15);
				\draw[{Stealth}-, ultra thick, red] (-1,1) -- (-0.15,0.15);
			\end{tikzpicture}
			\caption{Positive}
			\label{fig:pos}
		\end{subfigure}
		\begin{subfigure}[t]{0.22\textwidth}
			\centering
			\begin{tikzpicture}
				\draw[-{Stealth}, ultra thick, red] (1,-1) -- (-1,1);
				\draw[ultra thick, red] (-1,-1) -- (-0.15,-0.15);
				\draw[{Stealth}-, ultra thick, red] (1,1) -- (0.15,0.15);
			\end{tikzpicture}
			\caption{Negative}
			\label{fig:neg}
		\end{subfigure}
		\begin{subfigure}[t]{0.22\textwidth}
			\centering
			\begin{tikzpicture}
				\fill[gray!20] (0,0) -- (1,1) -- (-1,1) --  cycle;
				\fill[gray!20] (0,0) -- (1,-1) -- (-1,-1) --  cycle;
				\draw[ultra thick, red] (-1,-1) -- (1,1);
				\draw[ultra thick, red] (1,-1) -- (0.15,-0.15);
				\draw[ultra thick, red] (-1,1) -- (-0.15,0.15);
			\end{tikzpicture}
			\caption{Type I}
			\label{fig:I}
		\end{subfigure}
		\begin{subfigure}[t]{0.22\textwidth}
			\centering
			\begin{tikzpicture}
				\fill[gray!20] (0,0) -- (1,1) -- (-1,1) --  cycle;
				\fill[gray!20] (0,0) -- (1,-1) -- (-1,-1) --  cycle;
				\draw[ultra thick, red] (1,-1) -- (-1,1);
				\draw[ultra thick, red] (-1,-1) -- (-0.15,-0.15);
				\draw[ultra thick, red] (1,1) -- (0.15,0.15);
			\end{tikzpicture}
			\caption{Type II}
			\label{fig:II}
		\end{subfigure}
		
		\caption{Crossing attributes}
		\label{fig:crossing_shading}
	\end{figure}
	
	If a checkerboard-colored alternating link diagram $D$ has type I crossings, then the opposite shading of $D$ will have type II crossings. We can therefore fix a shading for any alternating diagram.
	
	\begin{conv}
		\label{conv:type}
		For any alternating link diagram $D$, we choose the checkerboard shading with type I crossings.
	\end{conv}
	
	A checkerboard-colored link diagram can also be used to build a plane graph, called the {\em Tait graph}, by placing a vertex in each shaded region. We then draw an edge through each crossing of the diagram, connecting the vertices in its adjacent shaded regions as in Figure \ref{fig:tait}.
	
	\subsection{The symmetrized Alexander polynomial}
	\label{sec:sym_alex}
	
	Our exposition here follows \cite{msbv25}, but see \cite{codaru14, kauf83} for earlier references. Let $D \subset \R$ be any diagram of a link $K \subset S^3$, and $G$ its Tait graph. Let $G^*$ be the planar dual of $G$, and let $\mathcal{H} = \mathcal{H}(G)$ be the set
	$$
	\mathcal{H} = G^* \cup G \subset \R^2.
	$$
	We endow $\mathcal{H}$ with the structure of a bipartite graph, with vertex classes $\mathcal{V}_1$ and $\mathcal{V}_2$, as follows:
	\begin{itemize}
		\item The vertex set $\mathcal{V}_1$ is given by
		$$
		\mathcal{V}_1 = V(G) \cup V(G^*),
		$$
		and the set $\mathcal{V}_2$ is
		$$
		\mathcal{V}_2 = G \cap G^*.
		$$
		\item The edges of $\mathcal{H}$, which we denote $\mathcal{E}$, are then the components of
		$$
		\mathcal{H} \setminus (\mathcal{V}_1 \cup \mathcal{V}_2).
		$$
	\end{itemize}
	The vertices $\mathcal{V}_2$ are in bijection with edges of $G$, since the points of $G \cap G^*$ are precisely the intersections of edges with their duals. It follows that each edge of $G$ and each edge of $G^*$ yields two edges of $\mathcal{H}$.
	
	We call the bipartite graph $\mathcal{H} = (\mathcal{V}_1 \sqcup \mathcal{V}_2, \mathcal{E})$ the {\em double overlay} of $G$---see Figure \ref{fig:overlay} for an example. As the figure suggests, $\mathcal{H}$ can also be defined directly from the diagram $D$: each region of $D$ contains a $\mathcal{V}_1$ vertex, each crossing is a $\mathcal{V}_2$ vertex, and we connect each region's vertex to the crossings on its border.
	
	\begin{figure}
		\centering
		
		\begin{subfigure}[t]{0.3\textwidth}
			\centering
			\begin{tikzpicture}[use Hobby shortcut]
				\useasboundingbox (-1.5,-2) rectangle (1.5,2);
				
				\begin{knot}[
					consider self intersections=true,
					ignore endpoint intersections=false,
					flip crossing=8,
					]
					\strand ([closed]1,-1.5) .. (0,-1) .. (-0.5,-0.5) .. (0,0) .. (0.5,0.5) .. (0,1) .. (-1,1.5) .. (-1.5,0) .. (-1,-1.5) .. (0,-1) .. (0.5,-0.5) .. (0,0) .. (-0.5, 0.5) .. (0,1) .. (1, 1.5) .. (1.5,0) .. (1, -1.5);
				\end{knot}
				
				\path[fill=gray!200, opacity=0.1, even odd rule] ([closed]1,-1.5) .. (0,-1) .. (-0.5,-0.5) .. (0,0) .. (0.5,0.5) .. (0,1) .. (-1,1.5) .. (-1.5,0) .. (-1,-1.5) .. (0,-1) .. (0.5,-0.5) .. (0,0) .. (-0.5, 0.5) .. (0,1) .. (1, 1.5) .. (1.5,0) .. (1, -1.5);
				
				\draw[-{Stealth}, line width=1pt, red!30] (0.9,1.5) -- (1.1,1.5);
				\draw[-{Stealth}, line width=1pt, red!30] (-0.9,1.5) -- (-1.1,1.5);
				
				\draw[thick] (-1,0) -- (1,0);
				\draw[thick] (-1,0) to[out=60,in=180] (0,1);
				\draw[thick] (-1,0) to[out=-60,in=180] (0,-1);
				\draw[thick] (1,0) to[out=120,in=0] (0,1);
				\draw[thick] (1,0) to[out=-120,in=0] (0,-1);
				\draw[fill=black] (-1,0) circle (1.5pt);
				\draw[fill=black] (1,0) circle (1.5pt);
			\end{tikzpicture}
			\caption{The Tait graph $G$}
			\label{fig:tait}
		\end{subfigure}
		\begin{subfigure}[t]{0.3\textwidth}
			\centering
			\begin{tikzpicture}[use Hobby shortcut]
				\useasboundingbox (-1.5,-2) rectangle (1.5,2);
								
				\begin{knot}[
					consider self intersections=true,
					ignore endpoint intersections=false,
					flip crossing=8,
					]
					\strand ([closed]1,-1.5) .. (0,-1) .. (-0.5,-0.5) .. (0,0) .. (0.5,0.5) .. (0,1) .. (-1,1.5) .. (-1.5,0) .. (-1,-1.5) .. (0,-1) .. (0.5,-0.5) .. (0,0) .. (-0.5, 0.5) .. (0,1) .. (1, 1.5) .. (1.5,0) .. (1, -1.5);
				\end{knot}
				
				\draw[-{Stealth}, line width=1pt, red!30] (0.9,1.5) -- (1.1,1.5);
				\draw[-{Stealth}, line width=1pt, red!30] (-0.9,1.5) -- (-1.1,1.5);
				
				\draw[thick] (-1,0) -- (1,0);
				\draw[thick] (-1,0) to[out=60,in=180] (0,1);
				\draw[thick] (-1,0) to[out=-60,in=180] (0,-1);
				\draw[thick] (1,0) to[out=120,in=0] (0,1);
				\draw[thick] (1,0) to[out=-120,in=0] (0,-1);
				\draw[thick] (0,-1) -- (0,1);
				\draw[thick] (0,1) to[out=75,in=90,looseness=2] (2,0);
				\draw[thick] (0,-1) to[out=-75,in=-90,looseness=2] (2,0);
				\draw[fill=black] (-1,0) circle (1.5pt);
				\draw[fill=black] (1,0) circle (1.5pt);
				\draw[fill=black] (0,0.5) circle (1.5pt);
				\draw[fill=black] (0,-0.5) circle (1.5pt);
				\draw[fill=black] (2,0) circle (1.5pt);
				\draw[fill=white] (0,-1) circle (1.5pt);
				\draw[fill=white] (0,0) circle (1.5pt);
				\draw[fill=white] (0,1) circle (1.5pt);
			\end{tikzpicture}
			\caption{The double overlay $\mathcal{H}$}
			\label{fig:overlay}
		\end{subfigure}
		\begin{subfigure}[t]{0.3\textwidth}
			\centering
			\begin{tikzpicture}[use Hobby shortcut]
				\useasboundingbox (-1.5,-2) rectangle (1.5,2);
				
				\begin{knot}[
					consider self intersections=true,
					ignore endpoint intersections=false,
					flip crossing=8,
					]
					\strand ([closed]1,-1.5) .. (0,-1) .. (-0.5,-0.5) .. (0,0) .. (0.5,0.5) .. (0,1) .. (-1,1.5) .. (-1.5,0) .. (-1,-1.5) .. (0,-1) .. (0.5,-0.5) .. (0,0) .. (-0.5, 0.5) .. (0,1) .. (1, 1.5) .. (1.5,0) .. (1, -1.5);
				\end{knot}
				
				\draw[-{Stealth}, line width=1pt, red!30] (0.9,1.5) -- (1.1,1.5);
				\draw[-{Stealth}, line width=1pt, red!30] (-0.9,1.5) -- (-1.1,1.5);
				
				\draw[thick] (-1,0) -- node[above] {$1$} (0,0);
				\draw[thick] (-1,0) to[out=60,in=180] node[left] {$1$} (0,1);
				\draw[thick] (-1,0) to[out=-60,in=180] node[left] {$1$} (0,-1);
				\draw[thick] (0,-1) -- (0,1);
				\node[] at (0.5,0.25) {$-t^{1/2}$};
				\node[] at (0.5,0.75) {$t^{-1/2}$};
				\node[] at (0.5,-0.25) {$t^{-1/2}$};
				\node[] at (0.5,-0.75) {$-t^{1/2}$};
				\draw[fill=black] (-1,0) circle (1.5pt);
				\draw[fill=black] (0,0.5) circle (1.5pt);
				\draw[fill=black] (0,-0.5) circle (1.5pt);
				\draw[fill=white] (0,-1) circle (1.5pt);
				\draw[fill=white] (0,0) circle (1.5pt);
				\draw[fill=white] (0,1) circle (1.5pt);
			\end{tikzpicture}
			\caption{An $\mathcal{H}^{red}$ with weights}
			\label{fig:hred}
		\end{subfigure}
		
		\caption{Graphs for the left trefoil}
		\label{fig:double_dual}
	\end{figure}
	
	Next, we endow the edges of $\mathcal{H}$ with weights in $\Z[t^{1/2}, t^{-1/2}]$ according to the crossing data of $D$, as shown in Figure \ref{fig:edge_weights}. We denote the weight function by
	$$
	\omega : \mathcal{E} \to \Z[t^{1/2}, t^{-1/2}].
	$$
	Finally, we fix two vertices $r, q \in \mathcal{V}_1$ such that $r \in V(G)$, $q \in V(G^*)$, and $r$ and $q$ are adjacent to a common vertex in $\mathcal{V}_2$. This last condition is equivalent to $r$ and $q$ being contained in adjacent regions of $D$, so we will sometimes refer to them as {\em adjacent vertices} even though they do not share an edge. We let
	$$
	\mathcal{H}^{red} = \mathcal{H}^{red}_{(r,q)}
	$$
	be the graph $\mathcal{H}$ with the vertices $r$ and $q$ deleted along with their adjacent edges, and we call $\mathcal{H}^{red}$ a {\em reduced double overlay} of $D$. The graph $\mathcal{H}^{red}$ inherits edge weights from $\mathcal{H}$, as well as the vertex bipartition
	$$
	V(\mathcal{H}^{red}) = (V_1 \setminus \{r, q\}) \sqcup V_2;
	$$
	Figure \ref{fig:hred} gives one example.
	
	\begin{figure}
		\centering
		
		\begin{subfigure}[t]{0.4\textwidth}
			\centering
			\begin{tikzpicture}
				\draw[-{Stealth}, ultra thick, red!30] (-1,-1) -- (1,1);
				\draw[ultra thick, red!30] (1,-1) -- (0.15,-0.15);
				\draw[{Stealth}-, ultra thick, red!30] (-1,1) -- (-0.15,0.15);
				
				\draw[thick] (0,0) -- node[above] {$1$} (1,0);
				\draw[thick] (0,0) -- node[above] {$1$} (-1,0);
				\draw[thick] (0,0) -- (0,1);
				\draw[thick] (0,0) -- (0,-1);
				\draw[fill=black] (-1,0) circle (1.5pt);
				\draw[fill=black] (1,0) circle (1.5pt);
				\draw[fill=black] (0,1) circle (1.5pt);
				\draw[fill=black] (0,-1) circle (1.5pt);
				\draw[fill=white] (0,0) circle (1.5pt);
				\node[anchor=west] at (1.1, 0.75) {$t^{1/2}$};
				\node[anchor=west] at (1.1, -0.75) {$-t^{-1/2}$};
				\draw[<-, thin] (0.1, 0.75) -- (1.1, 0.75);
				\draw[<-, thin] (0.1, -0.75) -- (1.1, -0.75);
			\end{tikzpicture}
			\caption{A positive crossing}
		\end{subfigure}
		\begin{subfigure}[t]{0.4\textwidth}
			\centering
			\begin{tikzpicture}
				\draw[-{Stealth}, ultra thick, red!30] (1,-1) -- (-1,1);
				\draw[ultra thick, red!30] (-1,-1) -- (-0.15,-0.15);
				\draw[{Stealth}-, ultra thick, red!30] (1,1) -- (0.15,0.15);
				
				\draw[thick] (0,0) -- node[above] {$1$} (1,0);
				\draw[thick] (0,0) -- node[above] {$1$} (-1,0);
				\draw[thick] (0,0) -- (0,1);
				\draw[thick] (0,0) -- (0,-1);
				\draw[fill=black] (-1,0) circle (1.5pt);
				\draw[fill=black] (1,0) circle (1.5pt);
				\draw[fill=black] (0,1) circle (1.5pt);
				\draw[fill=black] (0,-1) circle (1.5pt);
				\draw[fill=white] (0,0) circle (1.5pt);
				\node[anchor=west] at (1.1, 0.75) {$t^{-1/2}$};
				\node[anchor=west] at (1.1, -0.75) {$-t^{1/2}$};
				\draw[<-, thin] (0.1, 0.75) -- (1.1, 0.75);
				\draw[<-, thin] (0.1, -0.75) -- (1.1, -0.75);
			\end{tikzpicture}
			\caption{A negative crossing}
		\end{subfigure}
		
		\caption{Edge weights for $\mathcal{H}$}
		\label{fig:edge_weights}
	\end{figure}
	
	A {\em dimer}, or {\em perfect matching}, of $\mathcal{H}^{red}$ is a set of edges $M$ such that each vertex of $\mathcal{H}^{red}$ meets exactly one edge in $M$. Let $\mathcal{M}$ be the set of dimers of $\mathcal{H}^{red}$; then Kauffman proved:
	
	\begin{thm}[\cite{kauf83}]
		\label{thm:kauff}
		With notation as above, the symmetrized Alexander polynomial of the link $K$ is given by
		\begin{equation}
			\label{eq:sym_delt}
			\Delta_K(t) = \sum_{M \in \mathcal{M}} \prod_{f \in M} \omega(f) \in \Z[t^{1/2},  t^{-1/2}].
		\end{equation}
		In particular, the above sum does not depend on the choice of adjacent vertices $r$ and $q$.
	\end{thm}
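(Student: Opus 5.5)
This is Kauffman's state sum theorem, which the paper cites rather than proves. I would prove it through the Alexander matrix. The plan is to identify the weighted bipartite adjacency matrix of $\mathcal{H}^{red}$ with a reduced Alexander-type matrix, use that to get $\Delta_K$ up to a unit, and then remove the unit ambiguity.

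First I would set up the matrix. Since $\mathcal{H}$ is planar and bipartite, its Euler characteristic gives $|\mathcal{V}_1| = |\mathcal{V}_2| + 2$. So $\mathcal{H}^{red}$ has equally many vertices on each side, and we may form the square matrix $A$ with rows indexed by $\mathcal{V}_2$ (crossings) and columns indexed by $\mathcal{V}_1\setminus\{r,q\}$ (regions). The entry $A_{c,R}$ is the sum of $\omega(f)$ over edges $f$ joining $c$ to $R$. Expanding $\det A$ as a sum over permutations gives a sum over perfect matchings of $\mathcal{H}^{red}$, each weighted by $\pm\prod_{f\in M}\omega(f)$.

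Second, I would show the sign of every matching term is $+1$. This step uses planarity, as in Kasteleyn's theorem. The weights were chosen so that, around each crossing vertex, the four incident edges carry the signs $(+,+,+,-)$ in a fixed cyclic pattern. Any two perfect matchings differ by a union of disjoint cycles, and each such cycle bounds a disk in the plane. A Kasteleyn-style count of the crossings and regions enclosed by the cycle should show that the sign of the permutation change times the sign of the weight change is $+1$. I expect this sign verification, together with pinning down the global sign and power of $t$ in the next step, to be the main obstacle.

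Third, I would match $\det A$ with the Alexander polynomial. Each row of $A$ is, up to units, the Alexander/Fox relation at crossing $c$ in the region (Dehn/Alexander) presentation of the link group. The four regions around $c$ receive the entries $1, 1, t^{\pm1/2}, -t^{\mp1/2}$, which rescales Alexander's original $t,-t,1,-1$ pattern. The columns for any two adjacent regions are dependent in the appropriate sense, so deleting the columns for $r$ and $q$ yields Alexander's original determinant, which equals $\Delta_K$ up to $\pm t^{k}$. This also shows the answer does not depend on the adjacent pair $(r,q)$, again up to a unit.

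Finally, I would fix the unit. The weights are invariant under swapping each $t^{1/2}$ with $-t^{-1/2}$ together with mirroring, so $\det A$ is symmetric under $t\mapsto t^{-1}$. Evaluating at $t=1$ reduces $\det A$ to the determinant of a planar bipartite graph with $\pm1$ weights, which counts spanning trees of $G$ with a sign. That count should equal $\Delta_K(1)=\pm1$ for knots, with the appropriate analogue for links, and comparing it with Conway normalization fixes the sign. Alternatively, I would check that the state sum satisfies the Conway skein relation in its symmetrized form and agrees on the unknot, which pins the normalization down exactly.
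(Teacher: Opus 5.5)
You have put the signs on the wrong matrix, so steps two and three fail as written.

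The signs $(+,+,+,-)$ carried by $\omega$ are exactly Kauffman's Kasteleyn signing $\eta$ of Figure~\ref{fig:kauf_weights}. Take the face created by deleting $r$ and $q$ as the outer face. Every other face of $\mathcal{H}^{red}$ is then a $4$-cycle crossing--region--crossing--region around an arc of the diagram. Exactly one of its four edges is negative: the edge at the crossing where the arc enters, going to the corner between the two incoming strands.

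So the check you propose is true: $\operatorname{sgn}(\sigma_M)\cdot\operatorname{sgn}\prod_{f\in M}\omega(f)$ is independent of $M$. But what it proves is $\det A=\pm\sum_{M}\prod_{f\in M}|\omega(f)|$, the \emph{unsigned} count, not the state sum. The permutation signs $\operatorname{sgn}(\sigma_M)$ alone are not constant, since two matchings differing around a single $4$-face differ by a transposition. For the trefoil of Figure~\ref{fig:hred}, your $A$ has $\det A=\pm(t+1+t^{-1})$, while the state sum is $t-1+t^{-1}$. For non-alternating diagrams $\det A$ is not even a link invariant, since at $t=1$ it counts spanning trees of the Tait graph.

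Step three fails for the same reason. Rescaling rows and columns by units $\pm t^{k/2}$ preserves the sign of the product of the four entries around each $4$-face. That sign is $-1$ for $A$ but $+1$ for Alexander's matrix. There the entries are the Fox derivatives of the Dehn relation $R_1R_2^{-1}R_3R_4^{-1}$, with $R_1,\dots,R_4$ the regions around the crossing in cyclic order, so they alternate in sign around each crossing.

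The repair is to swap the two matrices. Use the matrix with entries $|\omega(f)|=\eta(f)\omega(f)$, so $1,1,t^{1/2},t^{-1/2}$ at a positive crossing. This is the one obtained from Alexander's matrix by multiplying each column by a signed half-integer power of $t$ determined by the Alexander index of the region, and each row by a unit. Kasteleyn's theorem with the signing $\eta=\operatorname{sgn}\omega$ then turns its reduced determinant into $\pm\sum_M\prod_{f\in M}\omega(f)$. So the state sum is $\Delta_K$ up to a unit. This is the mechanism the paper's footnote to Figure~\ref{fig:kauf_weights} alludes to; the paper itself only cites Kauffman.

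Your normalization step also needs more than you indicate.
\begin{itemize}
\item Evaluating at $t=1$ cannot fix the sign for links with two or more components, because there $\Delta_K(1)=0$.
\item The substitution $t^{1/2}\mapsto -t^{-1/2}$ corresponds to reversing the orientation of $D$, not to mirroring (mirroring is $t^{1/2}\mapsto t^{-1/2}$). So it compares the state sums of two different diagrams. It gives the symmetry only if you already know those sums agree exactly, not just up to a unit.
\end{itemize}
Exact independence of $(r,q)$ is part of the statement, so you genuinely need the exact normalization. In Kauffman's treatment this comes from showing that the state sum satisfies the Conway skein relation. That is a separate argument in its own right, not a final check.
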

	
	This Alexander polynomial is called {\em symmetrized} because $\Delta_K(t) = \Delta_K(t^{-1})$ for any link, and readers unfamiliar with the Alexander polynomial may take Theorem \ref{thm:kauff} as a definition. To study the polynomial combinatorially, it will be convenient to ignore negative signs.
	
	\begin{defn}
		\label{def:unsigned_delt}
		For any link $K$, with Alexander polynomial
		$$
		\Delta_K(t) = \sum_{i = -n}^n a_i t^{i/2},
		$$
		we define the {\em unsigned Alexander polynomial} of $K$ by
		$$
		|\Delta|_K(t) = \sum_{i = -n}^n |a_i| t^{i/2}.
		$$
	\end{defn}
	
	For an alternating link, no information is lost in this simplification.
	
	\begin{lemma}
		\label{lem:unsigned_delt}
		Let $K$ be a link with alternating diagram $D$. Then, following the notation of Theorem \ref{thm:kauff},
		\begin{equation}
			\label{eq:unsigned_delt}
			|\Delta|_K(t) = \sum_{M \in \mathcal{M}} \prod_{f \in M} |\omega(f)|.
		\end{equation}
		Additionally, the coefficients of $\Delta_K$ alternate in sign, so $\Delta_K$ can be recovered from $|\Delta|_K$ by making every other coefficient negative.
	\end{lemma}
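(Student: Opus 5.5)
The plan is to show that every dimer of $\mathcal{H}^{red}$ contributes a monomial of the same sign and of the same parity of exponent. Then there is no cancellation in (\ref{eq:sym_delt}), and the signs of the coefficients are determined by the exponents alone.

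First I would read off the weights in Figure \ref{fig:edge_weights} using Convention \ref{conv:type}. By Lemma \ref{lem:alt_crossing_signs}, all crossings of $D$ are type I. At each crossing, the two $\mathcal{H}$-edges to the shaded regions (the Tait graph $G$ half-edges) get weight $1$. The two edges to the unshaded regions (the $G^*$ half-edges) get weights that depend on the crossing sign.
\begin{itemize}
\item At a positive crossing they get $t^{1/2}$ and $-t^{-1/2}$.
\item At a negative crossing they get $t^{-1/2}$ and $-t^{1/2}$.
\end{itemize}
Because the crossing type is fixed, the position of the sign relative to the shading is the same at every crossing. This is the key place where alternation is used. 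In each case the weight is $\pm t^{\pm 1/2}$ or $1$, and the sign is determined by the exponent: the weight is negative exactly when it is $-t^{-1/2}$ at a positive crossing or $-t^{1/2}$ at a negative crossing.

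Next I would fix a dimer $M$. Every $\mathcal{V}_2$ vertex (crossing) is matched to exactly one region vertex. Since $r$ and $q$ are deleted, there are $|V(G)|-1$ crossings matched to $G$-vertices, and the remaining $|V(G^*)|-1$ crossings are matched to $G^*$-vertices, giving edges of weight $\pm t^{\pm1/2}$. So $\prod_{f\in M}\omega(f)=\pm t^{k/2}$ with $k\equiv |V(G^*)|-1 \pmod 2$, which is constant over all dimers.

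For the sign, write $n_-$ for the number of negative-weight edges in $M$. If $a$ $G^*$-edges have exponent $+1/2$ and $b$ have exponent $-1/2$, then $a+b=|V(G^*)|-1$ and $k=a-b$. Let $b_+$ be the number of $-1/2$ edges at positive crossings and $a_-$ the number of $+1/2$ edges at negative crossings. Then $n_-=b_++a_-$, and I need $(-1)^{n_-}$ to be a function of $k$ alone.

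This is the main obstacle, because $b_+$ and $a_-$ individually depend on which crossings are matched to the dual. I expect to resolve it by an involution or parity argument: two dimers differ by alternating cycles in the planar bipartite graph $\mathcal{H}^{red}$. Alternatively, one can use the standard Kauffman clock theorem, which says any two states are connected by transpositions. A single clock move changes two $\mathcal{V}_2$ matchings around a pair of adjacent regions. I would check on the local picture that each transposition changes $k$ by $\pm 2$ (or $0$) and changes $n_-$ by the corresponding amount mod $2$, namely by $1$ when $k$ changes by $2$. This is a finite local check using the fixed type I configuration. It gives $(-1)^{n_-}=\epsilon\,(-1)^{k/2-k_0/2}$ for a fixed reference state with exponent $k_0$.

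Once every term $\pm t^{k/2}$ has sign $\epsilon(-1)^{(k-k_0)/2}$, there is no cancellation in (\ref{eq:sym_delt}). So $|a_i|$ equals the sum of the absolute values of the dimer weights, which is (\ref{eq:unsigned_delt}). Moreover, the nonzero coefficients occur only at exponents $k/2$ of a single parity class and alternate in sign as $k$ increases by $2$. Therefore $\Delta_K$ is recovered from $|\Delta|_K$ by negating every other coefficient.
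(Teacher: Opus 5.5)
Your endgame, Kauffman's clock theorem plus a local check that each clock move multiplies a dimer's weight by $-t^{\pm1}$, is exactly the paper's argument. The bookkeeping before it, however, is wrong. At positive crossings you have the weights on the wrong side. By Lemma~\ref{lem:weight_comp} (Figure~\ref{fig:sign_comparison}), at a positive crossing the two halves of the \emph{Tait} edge carry $t^{1/2}$ and $-t^{-1/2}$, and the dual halves carry $1$. Only at negative crossings do the unshaded regions carry the nontrivial weights. (Kauffman's weights sit on the region between the two outgoing strands and the region between the two incoming strands; with type I shading these are shaded exactly at positive crossings.)

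Hence the number of non-unit factors in a dimer is not $|V(G^*)|-1$. Under the bijection of Theorem~\ref{thm:kpw} it is $|T\cap E_+|+|E_-\setminus T|$, which varies with $T$. Your congruence $k\equiv |V(G^*)|-1 \pmod 2$ is in fact false. For the right-handed trefoil, $G$ is a directed triangle of positive edges, so $|V(G^*)|-1=1$. Yet every dimer has two non-unit factors, and $\Delta=t-1+t^{-1}$ has integer exponents. Your claim that the signed weights sit in the same position relative to the shading at every crossing is also false.

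To repair this, drop that paragraph and extract both the parity and the sign from the clock moves. Take an arc of $D$ from $c_1$ to $c_2$, with regions $L,R$ on its two sides. A clock move along it exchanges $\{(c_1,L),(c_2,R)\}$ and $\{(c_1,R),(c_2,L)\}$. At $c_1$ the regions $L,R$ are the ``front'' region and a side region; at $c_2$ they are the ``back'' region and a side region. Which side the front (resp.\ back) lies on is determined by the crossing sign together with whether the arc is over or under at that end. In an alternating diagram an arc is over at one end and under at the other, so all four sign combinations give ratio exactly $-t^{\pm1}$. This is where alternation genuinely enters: for an arc that is over at both ends the ratio is $-1$, which is how cancellation arises in general.

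So $k$ changes by exactly $\pm2$ under every move, and the ``or $0$'' case never occurs. This gives the constant parity of $k$ and the sign rule $\epsilon(-1)^{(k-k_0)/2}$ at once, and your final paragraph then goes through.
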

	
	\begin{rmk}
		\label{rem:indet_alex}
		Technically, Lemma \ref{lem:unsigned_delt} implies $\Delta_K$ can be recovered {\em up to sign} from $|\Delta|_K$. This is not an issue, since it is standard to consider the Alexander polynomial as defined up to multiplication by a unit of $\Z[t^{1/2}, t^{-1/2}]$.
	\end{rmk}
	
	Lemma \ref{lem:unsigned_delt} is well known to experts, and we sketch a proof for completeness.
	
	\begin{proof}
		Fix an alternating link diagram $D$. Let $m$ be a monomial summand on the right side of (\ref{eq:sym_delt}), corresponding to a dimer of $\mathcal{H}^{red}$, with deg$(m)$ the degree of $m$ in $t$ and sgn$(m) \in \{-1,1\}$ the sign of its coefficient. Then the statements in the lemma follow from the fact that
		\begin{equation}
			\label{eq:alt_alex_signs}
			\text{deg}(m) + \alpha \equiv \frac{\text{sgn}(m) + 1}{2} \ (\text{mod } 2),
		\end{equation}
		where $\alpha \in \{0, \pm\frac{1}{2}, 1\}$ is a fixed constant depending only on $D$. In particular, any two monomials with the same degree have the same sign, so there is no cancellation in the summation in (\ref{eq:sym_delt}). One way to prove (\ref{eq:alt_alex_signs}) is via Kauffman's ``clock theorem,'' which states that any two dimers of $\mathcal{H}$ are related by a sequence of ``clock moves'' \cite[Theorem 2.5]{kauf83}. One can check, using the alternating property of $D$, that applying a clock move to a dimer changes the degree of its weight by one and switches the sign.
	\end{proof}	
	
	\subsection{The spanning tree/dimer bijection}
	\label{sec:tree_dim_one}
	
	In addition to the perfect matching definition of the Alexander polynomial above, Kauffman gave an equivalent formulation using pairs of dual spanning trees \cite{kauf83}. For this let $D \subset \R^2$ be an alternating link diagram, $G$ its Tait graph, and
	$$
	\mathcal{H}^{red} = \mathcal{H}^{red}_{(r,q)} = ((\mathcal{V}_1 \setminus \{r, q\}) \sqcup \mathcal{V}_2, \mathcal{E}, \omega)
	$$
	a reduced double overlay with edges weights as in the preceding section. Then $r$ and $q$ are vertices in $V(G)$ and $V(G^*)$ respectively, which we view as distinguished {\em root vertices} of $G$ and $G^*$.
	
	Let $T$ be a spanning tree of $G$, and $T^*$ the complementary tree of $G^*$. For any edge $e \in T$ the subgraph $T \setminus \{e\}$ has two components, exactly one of which contains the root $r$. We call the end of $e$ which abuts the rooted component of $T \setminus \{e\}$ the {\em root end} of $e$, and the other end the {\em leaf end}. Similarly, we assign a root and leaf end to each edge $e' \in T^*$, based on which component of $T^* \setminus \{e'\}$ contains $q$.
	
	Recall that each edge $e \in E(G)$ and $e' \in E(G^*)$ gives two edges in the double overlay $\mathcal{H}$, with each new edge occupying half of the original. If $e \in T$ or $e' \in T^*$ for some fixed tree $T$, then we call the two corresponding edges of $\mathcal{H}$ the {\em root half} of $e$ (or $e'$) and the {\em leaf half} of $e$ (or $e'$), depending on which end of $e$ they occupy.
	
	Kauffman proved the following bijection between dimers and spanning trees, which was later re-proved independently by Kenyon, Propp and Wilson \cite{kpw00}.
	
	\begin{thm}[{\cite{kauf83}, \cite[Theorem 1]{kpw00}}]
		\label{thm:kpw}
		Let $\mathcal{T}$ be the set of spanning trees of a plane graph $G$, and let $\mathcal{M}$ be the set of perfect matchings of its reduced double overlay $\mathcal{H}^{red}_{(r,q)}$ for some adjacent pair of root vertices $r$ and $q$. Then there is a bijection 
		\begin{equation}
			\label{eq:bij}
			\Phi : \mathcal{T} \to \mathcal{M}
		\end{equation}
		defined as follows: for any tree $T \subset G$, let $T^*$ be the complementary tree of $G^*$. Then the matching $\Phi(T) \in \mathcal{M}$ is given by taking the leaf half of every edge in $T$ and $T^*$, as determined by $r$ and $q$.
	\end{thm}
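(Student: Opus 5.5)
The strategy is to check directly that $\Phi(T)$ is a perfect matching, then prove injectivity and surjectivity by reconstructing $T$ from a matching. Throughout, the key structural fact is that each $\mathcal{V}_2$ vertex $v_e$ lies on exactly one of $e$ and $e^*$, up to the half-edges meeting there. It has degree four in $\mathcal{H}$: two half-edges of $e$ and two half-edges of $e^*$.

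First, $\Phi(T)$ is a matching covering every vertex exactly once.
\begin{itemize}
\item For each edge $e$ of $G$, exactly one of $e \in T$ or $e^* \in T^*$ holds. So exactly one leaf half is chosen at the crossing vertex $v_e \in \mathcal{V}_2$, and each $\mathcal{V}_2$ vertex is covered once.
\item For a vertex $u \in V(G) \setminus \{r\}$, the tree $T$ has a unique edge joining $u$ to its parent, namely the first edge on the path from $u$ to $r$. That edge has $u$ as its leaf end, so its leaf half covers $u$.
\item Any other tree edge incident to $u$ has $u$ as its root end, so it contributes nothing at $u$.
\item The root $r$ is never a leaf end, consistent with its deletion from $\mathcal{H}^{red}$.
\item The same argument in $T^*$ with root $q$ handles $V(G^*) \setminus \{q\}$.
\end{itemize}
Hence $\Phi(T) \in \mathcal{M}$. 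Injectivity is then immediate: $T$ is recovered from $\Phi(T)$ as the set of edges $e$ whose chosen half at $v_e$ lies on $e$ rather than on $e^*$.

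Surjectivity is the substantive step. Given $M \in \mathcal{M}$, let $T$ be the set of $e$ whose matched half at $v_e$ lies on $e$, and let $T^*$ be the complementary set of duals. Counting vertices gives $|T| + |T^*| = |E|$. Matching $V(G) \setminus \{r\}$ uses exactly $|V(G)| - 1$ half-edges of $G$-edges, each from a distinct edge since each $v_e$ is matched once. So $|T| = |V(G)| - 1$, and similarly $|T^*| = |V(G^*)| - 1$. Orienting each edge of $T$ toward the vertex its matched half covers, every non-root vertex has out-degree one in this "parent" structure. So $T$ is a tree unless this functional graph contains a cycle $C$ avoiding $r$, and the analogous statement holds for $T^*$.

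The main obstacle is excluding such a cycle, and this is where planarity and the adjacency of $r$ and $q$ enter. Suppose $C \subset T$ is a directed cycle not through $r$. By the Jordan curve theorem, $C$ bounds a disk $\Delta$. The vertices of $G$, $G^*$ and the crossing points strictly inside $\Delta$, together with the crossings on $C$, must be matched among themselves, since matched edges cannot cross $C$ except at its vertices. Moreover, $r$ and $q$ lie on the same side of $C$, because they are in adjacent regions sharing a crossing not on $C$. A parity count then gives a contradiction:
\begin{itemize}
\item Let $a$, $b$ and $c$ be the numbers of $G$-vertices, $G^*$-vertices and crossings strictly inside $\Delta$, and let $k = |C|$.
\item The cycle uses up its $k$ crossings and $k$ vertices among themselves.
\item The interior must then be perfectly matched, which requires $a + b = c$.
\item Euler's formula applied to the plane graph inside $\Delta$ gives $a + b = c + 1$ when $r$ and $q$ lie outside $\Delta$, a contradiction. (If both lie inside, apply the same count to the outside of $C$ on the sphere.)
\end{itemize}
The same argument applies to $T^*$. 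Thus $T$ and $T^*$ are complementary spanning trees with $\Phi(T) = M$.

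The delicate point is getting this Euler characteristic count exactly right, including edges of $G$ that lie along $C$ versus crossing into the interior. I would check it carefully on a small example before committing to it.
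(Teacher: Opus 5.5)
Your proof is correct. The paper gives no proof of this statement; it simply quotes it from Kauffman and from Kenyon--Propp--Wilson. Your argument is therefore a self-contained proof, and it is essentially the Temperley-style Euler-count argument used by Kenyon--Propp--Wilson. Injectivity is immediate. For surjectivity you show $|T| = |V(G)|-1$ and reduce to excluding a parent-pointer cycle $C$ avoiding $r$. Adjacency of $r$ and $q$ enters exactly where you say: it puts $r$ and $q$ on the same side of $C$.

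The count you were unsure about does close up. The graph $G \cap \bar\Delta$ has:
\begin{itemize}
\item $a+k$ vertices;
\item $c+k$ edges, since the non-cycle edges in $\bar\Delta$ are exactly those whose crossing lies strictly inside $\Delta$;
\item $b+1$ faces.
\end{itemize}
Euler's formula then gives $a+b=c+1$.

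A few points should be stated explicitly when you write it up.
\begin{itemize}
\item Euler's formula needs $G\cap\bar\Delta$ to be connected. This holds because a path in $G$ from an interior vertex can only leave $\bar\Delta$ through a vertex of $C$.
\item When $r$ and $q$ both lie inside, you need not pass to the sphere. The surviving interior $\mathcal{V}_1$-vertices then number $a+b-2=c-1\neq c$, which already gives the contradiction.
\item Once $T$ is acyclic, the parent pointers lead to $r$. So each matched half is the leaf half, and this is what yields $\Phi(T)=M$. By planar duality you only need acyclicity for one of $T$ and $T^*$, but the leaf-half identification is needed on both sides.
\end{itemize}
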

	
	From Theorem \ref{thm:kpw} and Theorem \ref{thm:kauff}, we obtain another formulation of the symmetrized Alexander polynomial.
	
	\begin{thm}[\cite{kauf83}]
		\label{thm:tree_alex}
		Let $K$ be a link with diagram $D$. Let $G$ be the Tait graph of $D$, $G^*$ its planar dual, and $\mathcal{T}$ the set of spanning trees of $G$. Fix adjacent root vertices $r \in V(G)$ and $q \in V(G^*)$, and for any spanning tree $T$ of $G$ or $G^*$, let $\omega(T)$ denote the product of the weights of the leaf halves of all its edges in the double overlay $\mathcal{H}$. Then the symmetrized Alexander polynomial of $K$ is given by
		$$
		\Delta_K = \sum_{T \in \mathcal{T}} \omega(T)\omega(T^*),
		$$
		where $T^*$ indicates the dual of $T$.
	\end{thm}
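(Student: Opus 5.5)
This is a direct consequence of Theorem \ref{thm:kpw} combined with Theorem \ref{thm:kauff}, so the plan is to push the dimer sum through the bijection $\Phi$ and check that the weights match term by term.

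\textbf{Step 1.} Fix a diagram $D$ of $K$, its Tait graph $G$, and adjacent roots $r \in V(G)$ and $q \in V(G^*)$. Form the reduced double overlay $\mathcal{H}^{red}_{(r,q)}$ with the weights $\omega$ of Figure \ref{fig:edge_weights}. By Theorem \ref{thm:kauff},
$$
\Delta_K(t) = \sum_{M \in \mathcal{M}} \prod_{f \in M} \omega(f),
$$
and this sum is independent of the choice of adjacent $r$ and $q$.

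\textbf{Step 2.} Reindex the sum using the bijection $\Phi : \mathcal{T} \to \mathcal{M}$ of Theorem \ref{thm:kpw}. This gives
$$
\Delta_K(t) = \sum_{T \in \mathcal{T}} \prod_{f \in \Phi(T)} \omega(f).
$$

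\textbf{Step 3.} Identify the summands. By definition, $\Phi(T)$ is the disjoint union of the leaf halves of the edges of $T$ and the leaf halves of the edges of $T^*$. Because $T$ and $T^*$ partition the edge set (under the identification $E(G) = E(G^*)$), each edge of $G \cup G^*$ contributes exactly one half to $\Phi(T)$, and no half is counted twice. Therefore
$$
\prod_{f \in \Phi(T)} \omega(f) = \omega(T)\,\omega(T^*),
$$
which yields the stated formula.

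\textbf{Expected obstacle.} The only point that needs care is consistency of conventions. The leaf halves used in defining $\omega(T)$ and $\omega(T^*)$ must be the same edges of $\mathcal{H}$, carrying the same weights, as those selected by $\Phi$. Both are determined by the same roots $r$ and $q$, so this holds. One should also note that leaf halves never touch $r$ or $q$, so they are genuine edges of $\mathcal{H}^{red}$. Beyond this, the argument is bookkeeping and involves no real difficulty.
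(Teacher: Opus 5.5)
Your proposal is correct and follows the paper's own proof: you reindex Kauffman's dimer sum (Theorem~\ref{thm:kauff}) through the bijection $\Phi$ of Theorem~\ref{thm:kpw}, and you observe that $\prod_{f \in \Phi(T)} \omega(f) = \omega(T)\,\omega(T^*)$ holds by definition. One small wording fix: it is each crossing, i.e.\ each pair $\{e, e^*\}$, that contributes exactly one half-edge to $\Phi(T)$, not each edge of $G \cup G^*$, since an edge of $G$ outside $T$ contributes nothing on its own.
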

	
	\begin{proof}
		Let $\Phi$ be the bijection of Theorem \ref{thm:kpw}. Then by definition we have
		$$
		\omega(T)\omega(T^*) = \prod_{f \in \Phi(T)} \omega(f)
		$$
		for any tree $T \in \mathcal{T}$. The result follows from Theorem \ref{thm:kauff} and the fact that $\Phi$ is a bijection.
	\end{proof}
	
	\section{A Spanning Tree Alexander Polynomial for Alternating Links}
	\label{sec:trees}
	
	\subsection{Orienting the Tait graph}
	\label{sec:tait_digraph}
	
	By Convention \ref{conv:type}, a Tait graph uniquely determines an alternating link up to orientation. To encode link orientations, we endow our Tait graphs with edge directions.
	
	\begin{defn}
		\label{def:ditait}
		Let $D$ be an oriented alternating link diagram, and $G$ its Tait graph. Then the {\em Tait digraph} is the plane digraph obtained by orienting each edge of $G$ to match the orientation of the corresponding overcrossing of $D$.
	\end{defn}
	
	Figure \ref{fig:ditait} shows one example. Although edge directions could be assigned to the Tait graph of any link diagram in this way, we will only consider Tait digraphs when the diagram is alternating.
	
	\begin{figure}
		\centering
		\begin{tikzpicture}[use Hobby shortcut]
			\useasboundingbox (-1.5,-2) rectangle (1.5,2);
			
			\begin{knot}[
				consider self intersections=true,
				ignore endpoint intersections=false,
				flip crossing=8,
				]
				\strand ([closed]1,-1.5) .. (0,-1) .. (-0.5,-0.5) .. (0,0) .. (0.5,0.5) .. (0,1) .. (-1,1.5) .. (-1.5,0) .. (-1,-1.5) .. (0,-1) .. (0.5,-0.5) .. (0,0) .. (-0.5, 0.5) .. (0,1) .. (1, 1.5) .. (1.5,0) .. (1, -1.5);
			\end{knot}
			
			\draw[-{Stealth}, line width=1pt, red!30] (0.9,1.5) -- (1.1,1.5);
			\draw[-{Stealth}, line width=1pt, red!30] (-0.9,1.5) -- (-1.1,1.5);
			
			\draw[thick, postaction={decorate}, decoration={
				markings,
				mark=at position 0.05 with {\arrowreversed{Triangle}}
			}] (-1,0) -- (1,0);
			\draw[thick, postaction={decorate}, decoration={
				markings,
				mark=at position 0.05 with {\arrowreversed{Triangle}}
			}] (-1,0) to[out=60,in=180] (0,1);
			\draw[thick, postaction={decorate}, decoration={
				markings,
				mark=at position 0.05 with {\arrowreversed{Triangle}}
			}] (-1,0) to[out=-60,in=180] (0,-1);
			\draw[thick] (1,0) to[out=120,in=0] (0,1);
			\draw[thick] (1,0) to[out=-120,in=0] (0,-1);
			\draw[fill=black] (-1,0) circle (1.5pt);
			\draw[fill=black] (1,0) circle (1.5pt);
		\end{tikzpicture}

		\caption{A Tait digraph for the left trefoil}
		\label{fig:ditait}
	\end{figure}
	
	\subsection{The spanning tree/dimer bijection for alternating links}
	\label{sec:dimer_tree}

	In this section we rewrite Theorem \ref{thm:tree_alex} for alternating diagrams, using our edge directions on the Tait graph. Since every crossing in an alternating diagram is type I by Convention \ref{conv:type}, the weights of edges around a crossing interact with shaded regions as shown in Figure \ref{fig:sign_comparison}. We've also added arrows to the figure to show the orientations of the relevant edges of the Tait digraph and its dual. Recalling that vertices of the Tait digraph correspond to shaded regions, Figure \ref{fig:sign_comparison} shows:
	
	\begin{lemma}
		\label{lem:weight_comp}
		Let $D \subset \R^2$ be an alternating link diagram with Tait digraph $G$, dual digraph $G^*$, and double overlay $\mathcal{H}$. Let $c$ be a crossing of $D$, with corresponding directed edges $e \in G$ and $e^* \in G^*$. Let $f_h$ and $f_t$ be the two edges in $\mathcal{H}$ contained in $e$, with $f_h$ at the head of $e$ and $f_t$ at the tail, and define $f'_h$ and $f'_t$ analogously for $e^*$. Then:
		\begin{enumerate}[label=(\roman*)]
			\item If $c$ is a positive crossing then $f_h$ has weight $t^{1/2}$, $f_t$ has weight $-t^{-1/2}$, and $f'_h$ and $f'_t$ both have weight $1$.
			\item If $c$ is a negative crossing then $f'_h$ has weight $t^{-1/2}$, $f'_t$ has weight $-t^{1/2}$, and $f_h$ and $f_t$ both have weight $1$.
		\end{enumerate}
	\end{lemma}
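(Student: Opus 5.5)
This is a local check at a single crossing, so I would verify it by direct inspection of the picture around $c$. First I fix the geometry. By Convention \ref{conv:type} every crossing of $D$ is type I, so near $c$ the shaded regions sit as in Figure \ref{fig:I}: the top and bottom quadrants are shaded, and the overstrand runs from lower left to upper right. The Tait edge $e$ therefore joins the top and bottom shaded regions, and the dual edge $e^*$ joins the left and right unshaded regions.

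Next I read off the orientation of $e$. By Definition \ref{def:ditait}, $e$ is oriented to match the overstrand at $c$. I would place $e$ so that it crosses the overstrand transversally and is oriented parallel to it, i.e.\ pointing from the bottom shaded region to the top one exactly when the overstrand points upward. I then compare this with the understrand direction, which is what distinguishes positive from negative. The orientation of $e^*$ is then forced by the convention of Figure \ref{fig:dual_o}: $e^*$ is $e$ rotated a quarter turn clockwise. So the head of $e^*$ is a definite one of the two unshaded regions, once the direction of $e$ is known.

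Then I compare with Figure \ref{fig:edge_weights}. There the $\mathcal{H}$-edges along the horizontal direction have weight $1$, and the vertical ones carry the nontrivial weights. In the positive crossing the upper one has weight $t^{1/2}$ and the lower one $-t^{-1/2}$; in the negative crossing these are $t^{-1/2}$ and $-t^{1/2}$. I need to check two things.

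\emph{Which pair is nontrivial.} The nontrivial (vertical) edges must lie along $e$ (shaded to shaded) in the positive case and along $e^*$ in the negative case. This is a matter of rotating Figure \ref{fig:edge_weights} so that its crossing is type I with the shading of Figure \ref{fig:I}.
\begin{itemize}
\item For the positive crossing of Figure \ref{fig:pos}, whose overstrand runs lower left to upper right, the picture already matches Figure \ref{fig:I}. The top and bottom regions are shaded, so the vertical edges of $\mathcal{H}$ lie along $e$.
\item For the negative crossing, I rotate by $90^\circ$ so that its overstrand also runs lower left to upper right. Now the weighted vertical edges become horizontal, so they lie along $e^*$.
\end{itemize}

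\emph{Which end carries $t^{\pm1/2}$ versus $-t^{\mp1/2}$.} In the positive case the overstrand points up and right, so $e$ points upward, and the upper half $f_h$ gets $t^{1/2}$, as claimed. In the negative case, after the rotation, I track where the upper weighted edge (weight $t^{-1/2}$) lands. I then check, using the orientation of $e$ from the rotated overstrand and the clockwise-quarter-turn rule, that this region is the head of $e^*$.

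The only real obstacle is this bookkeeping of rotations and orientation conventions in the negative case. I would be careful to rotate the negative-crossing picture in the direction that preserves the overstrand arrow, not merely its line. I would also double-check that direction against the dual-orientation convention. If the sign came out reversed, that would indicate the rotation should be by $-90^\circ$ with the overstrand reversed, and the check must be redone accordingly.
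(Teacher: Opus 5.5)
Your proposal is correct and takes the same approach as the paper, whose proof is just inspection of Figure~\ref{fig:sign_comparison}: the two crossings drawn with their type I shading, the oriented edges $e$ and $e^*$, and Kauffman's weights overlaid. The negative-case check you leave pending does work out. Rotating clockwise by $90^\circ$ sends the $t^{-1/2}$ region to the right, the overstrand then points up and to the right so $e$ points up, and hence $e^*$ points right with its head in that region. Your worry about the direction of rotation is also unnecessary: the orientation of $e$, the quarter-turn rule for $e^*$, the type I shading and Kauffman's weights all commute with rotations of the plane, so either rotation gives the same answer.
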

	
	\begin{figure}
		\centering
			
		\begin{subfigure}[t]{0.4\textwidth}
			\centering
			\begin{tikzpicture}
				\fill[gray!20] (0,0) -- (1,1) -- (-1,1) --  cycle;
				\fill[gray!20] (0,0) -- (1,-1) -- (-1,-1) --  cycle;
				\draw[-{Stealth}, ultra thick, red!30] (-1,-1) -- (1,1);
				\draw[ultra thick, red!30] (1,-1) -- (0.15,-0.15);
				\draw[{Stealth}-, ultra thick, red!30] (-1,1) -- (-0.15,0.15);
				
				\draw[thick, -Triangle] (0,0) -- node[above] {$1$} (0.95,0);
				\draw[thick] (0,0) -- node[above] {$1$} (-1,0);
				\draw[thick, -Triangle] (0,-1) -- (0,0.95);
				\draw[fill=black] (-1,0) circle (1.5pt);
				\draw[fill=black] (1,0) circle (1.5pt);
				\draw[fill=black] (0,1) circle (1.5pt);
				\draw[fill=black] (0,-1) circle (1.5pt);
				\draw[fill=white] (0,0) circle (1.5pt);
				\node[anchor=west] at (1.1, 0.75) {$t^{1/2}$};
				\node[anchor=west] at (1.1, -0.75) {$-t^{-1/2}$};
				\draw[<-, thin] (0.2, 0.75) -- (1.1, 0.75);
				\draw[<-, thin] (0.2, -0.75) -- (1.1, -0.75);
			\end{tikzpicture}
			\caption{The positive case}
		\end{subfigure}
		\begin{subfigure}[t]{0.4\textwidth}
			\centering
			\begin{tikzpicture}
				\fill[gray!20] (0,0) -- (1,1) -- (1,-1) --  cycle;
				\fill[gray!20] (0,0) -- (-1,1) -- (-1,-1) --  cycle;
				\draw[-{Stealth}, ultra thick, red!30] (1,-1) -- (-1,1);
				\draw[ultra thick, red!30] (-1,-1) -- (-0.15,-0.15);
				\draw[{Stealth}-, ultra thick, red!30] (1,1) -- (0.15,0.15);
				
				\draw[thick] (0,0) -- node[above] {$1$} (1,0);
				\draw[thick, -Triangle] (0,0) -- node[above] {$1$} (-0.95,0);
				\draw[thick, Triangle-] (0,0.95) -- (0,-1);
				\draw[fill=black] (-1,0) circle (1.5pt);
				\draw[fill=black] (1,0) circle (1.5pt);
				\draw[fill=black] (0,1) circle (1.5pt);
				\draw[fill=black] (0,-1) circle (1.5pt);
				\draw[fill=white] (0,0) circle (1.5pt);
				\node[anchor=west] at (1.1, 0.75) {$t^{-1/2}$};
				\node[anchor=west] at (1.1, -0.75) {$-t^{1/2}$};
				\draw[<-, thin] (0.2, 0.75) -- (1.1, 0.75);
				\draw[<-, thin] (0.2, -0.75) -- (1.1, -0.75);
			\end{tikzpicture}
			\caption{The negative case}
		\end{subfigure}
		\caption{Weights and edge directions near type I crossings}
		\label{fig:sign_comparison}
	\end{figure}
	
	This motivates the following definition.
	
	\begin{defn}
		\label{def:wt_fn}
		Let $D$ and $G$ be as above, and fix adjacent root vertices $r \in V(G)$ and $q \in V(G^*)$. Let $T$ be a spanning tree of $G$ with $T^*$ the complementary tree of $G^*$. We define a weight function
		$$
		\omega_T : E(G) \to \Z[t^{-1/2}, t^{1/2}],
		$$
		depending on $T$, as follows. For any edge $e \in E(G)$ with dual edge $e^* \in E(G^*)$, set:
		$$
		\omega_T(e) = \begin{cases}
			t^{1/2} & \text{$e \in T$, $e$ meets a positive crossing of $D$, $e$ points away from $r$ in $T$.} \\
			-t^{-1/2} & \text{$e \in T$, $e$ meets a positive crossing of $D$, $e$ points toward $r$ in $T$.}  \\
			t^{-1/2} & \text{$e \notin T$, $e$ meets a negative crossing of $D$, $e^*$ points away from $q$ in $T^*$.} \\
			-t^{1/2} & \text{$e \notin T$, $e$ meets a negative crossing of $D$, $e^*$ points toward $q$ in $T^*$.}  \\
			1 & \text{otherwise.}
		\end{cases}
		$$
	\end{defn}
	
	Then we have:
	
	\begin{lemma}
		\label{lem:init_alex}
		Let $D \subset \R^2$ be an alternating diagram of a link $K$, with Tait digraph $G$ and dual digraph $G^*$. Fix adjacent root vertices $r \in V(G)$ and  $q \in V(G^*)$, and let $\mathcal{T}$ be the set of spanning trees of $G$. Then the symmetrized Alexander polynomial of $K$ is given by
		$$
		\Delta_K(t) = \sum_{T \in \mathcal{T}} \prod_{e \in E(G)} \omega_T(e),
		$$
		where $\omega_T$ is the weight function of Definition \ref{def:wt_fn}.
	\end{lemma}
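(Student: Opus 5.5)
The plan is to start from Theorem \ref{thm:tree_alex}, which gives
$$
\Delta_K = \sum_{T \in \mathcal{T}} \omega(T)\omega(T^*),
$$
where $\omega(T)$ and $\omega(T^*)$ are the products of the weights of the leaf halves of the edges of $T$ and $T^*$ in $\mathcal{H}$. The lemma then reduces to the termwise identity
$$
\omega(T)\omega(T^*) = \prod_{e \in E(G)} \omega_T(e)
$$
for every spanning tree $T$. Since $E(G)$ is partitioned into edges of $T$ and edges whose duals lie in $T^*$, the plan is to group the product crossing by crossing. For each edge $e$ of $G$ with crossing $c$, exactly one of $e \in T$ or $e^* \in T^*$ holds. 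It therefore suffices to show that the weight of the leaf half of whichever of $e, e^*$ lies in the tree equals $\omega_T(e)$.

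Next, I would identify the leaf half in terms of directions. Take $e \in T$. Its leaf end is the endpoint lying in the component of $T \setminus \{e\}$ not containing $r$. So $e$ points away from $r$ in $T$ exactly when its head is the leaf end, in which case the leaf half is $f_h$ in the notation of Lemma \ref{lem:weight_comp}. If $e$ points toward $r$, the leaf half is $f_t$. The same reasoning applies to $e^* \in T^*$ with root $q$: pointing away from $q$ means the leaf half is $f'_h$, and pointing toward $q$ means it is $f'_t$.

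Finally, I would split into four cases according to whether $e \in T$ or $e \notin T$, and whether $c$ is positive or negative, and read the weights off Lemma \ref{lem:weight_comp}.
\begin{itemize}
\item $e\in T$, $c$ positive: the leaf half has weight $t^{1/2}$ or $-t^{-1/2}$ according to direction, matching the first two lines of Definition \ref{def:wt_fn}.
\item $e \in T$, $c$ negative: both $f_h$ and $f_t$ have weight $1$, matching the ``otherwise'' case.
\item $e \notin T$, $c$ negative: the leaf half of $e^*$ is $f'_h$ or $f'_t$, with weights $t^{-1/2}$ or $-t^{1/2}$, matching lines three and four.
\item $e\notin T$, $c$ positive: $f'_h$ and $f'_t$ both have weight $1$, again matching the ``otherwise'' case.
\end{itemize}
Multiplying over all edges gives the termwise identity, and summing over $T$ proves the lemma.

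There is no real obstacle here. The only point requiring care is the convention that ``points away from $r$ in $T$'' means the head is the leaf end. Checking this against Lemma \ref{lem:weight_comp}, and against the dual orientation convention of Figure \ref{fig:dual_o} for $e^*$, is where a sign or orientation error could enter. Lemma \ref{lem:weight_comp} is already stated in terms of the dual digraph orientation, so it can be applied directly.
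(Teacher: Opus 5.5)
Your proposal is correct and follows the paper's proof: the paper also reduces to the termwise identity $\prod_{e \in E(G)} \omega_T(e) = \omega(T)\omega(T^*)$ by matching each $\omega_T(e)$ with the weight of the leaf half of $e$ or $e^*$, using Lemma \ref{lem:weight_comp} and Definition \ref{def:wt_fn}, and then sums over trees using Theorem \ref{thm:tree_alex}. Your four-case analysis, including the observation that pointing away from the root means the head is the leaf end, simply spells out the verification the paper calls clear.
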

	
	\begin{proof}
		Using the notation of Section \ref{sec:tree_dim_one}, it's clear from Lemma \ref{lem:weight_comp} and Definition \ref{def:wt_fn} that for any edge $e \in T$, $\omega_T(e) = \omega(f)$, where $f$ is the leaf half of $e$ in $\mathcal{H}^{red}$. Similarly, if $e \notin T$ then $\omega_T(e) = \omega(f')$, where $f'$ is the leaf half of the dual edge $e^* \in E(G^*)$ in the complementary tree $T^*$. Therefore
		$$
		\prod_{e \in E(G)} \omega_T(e) = \omega(T)\omega(T^*)
		$$
		for any $T \in \mathcal{T}$, and the result follows from Theorem \ref{thm:tree_alex}.
	\end{proof}
	
	\subsection{Restatement using activities}
	\label{sec:act_alex}
	
	We can write Lemma \ref{lem:init_alex} more succinctly using spanning tree {\em activities}. Let $G = (E,V)$ be a digraph with a distinguished root $r \in V$, and $T$ a spanning tree of $G$. For any edge $e \in T$, as discussed above, the set $T \setminus \{e\}$ has two connected components $T_1$ and $T_2$. These determine a partition $V = V_1 \cup V_2$ on the vertices of $G$, where
	$$
	V_i = \{v \in V \mid v \in T_i\}
	$$
	for $i \in \{1,2\}$. The {\em fundamental cut} $\text{Cut}_T(e) \subset E$, determined by $e$ and $T$, is the set of edges which have one end in $V_1$ and the other in $V_2$. Dually, for any edge $e' \in E \setminus T$, the subgraph $T \cup \{e'\}$ contains a unique cycle. This cycle is the {\em fundamental cycle} determined by $e'$ and $T$, denoted $\text{Cyc}_T(e')$. By construction, $e \in \text{Cut}_T(e)$ and $e' \in \text{Cyc}_T(e')$.
	
	\begin{defn}
		\label{def:int}
		Using the notation above, we say an edge $e \in T$ is {\em internally semi-active (with respect to $T$)} if, as an element of $\text{Cut}_T(e)$, $e$ points away from the component $V_i \subset V$ containing the root vertex $r$. Otherwise, $e$ is {\em internally semi-passive}. We additionally define:
		\begin{align*}
			\text{Int}(T) &= \{e \in T \mid e \text{ is internally semi-active}\} \\
			\overline{\text{Int}}(T) &= \{e \in T \mid e \text{ is internally semi-passive}\} \\
			\iota(T) &= |\text{Int}(T)| \\
			\overline{\iota}(T) &= |\overline{\text{Int}}(T)|.
		\end{align*}
	\end{defn}
	
	Next, suppose one of the faces of $G$ is marked with a basepoint $q$. In this case we have a dual notion.
	
	\begin{defn}
		\label{def:ext}
		Let $e$ be an edge not in $T$, with fundamental cycle $\text{Cyc}_T(e)$, and orient $\text{Cyc}_T(e)$ so it runs counter-clockwise around $q$ in the one-point compactification $\R^2 \cup \{\infty\}$. Then $e$ is {\em externally semi-active} if the direction of $e$ agrees with this orientation of $\text{Cyc}_T(e)$. Otherwise we say $e$ is {\em externally semi-passive}, and we define:
		\begin{align*}
			\text{Ext}(T) &= \{e \in E \setminus T \mid e \text{ is externally semi-active}\} \\
			\overline{\text{Ext}}(T) &= \{e \in E \setminus T \mid e \text{ is externally semi-passive}\} \\
			\varepsilon(T) &= |\text{Ext}(T)| \\
			\overline{\varepsilon}(T) &= |\overline{\text{Ext}}(T)|.
		\end{align*}
	\end{defn}
	
	Definition \ref{def:ext} is simplest when $q$ lies in the unbounded face of $G$---in this case, a cycle of $G$ goes counter-clockwise around $q$ if it runs {\em clockwise} in the plane.
	
	\begin{rmk}
		The phrase {\em externally semi-active} appears in \cite{kmp25} with a similar meaning---see the proof of Lemma 5.2 in that paper, and our Lemma \ref{lem:int_ext} below. It is typically defined differently, however, as in \cite{kmp25, tot24}. These notions can be made to coincide when $G$ is a bipartite plane graph, but are generally distinct.
	\end{rmk}

	\begin{lemma}
		\label{lem:int_ext}
		Let $G$ be a plane digraph with dual digraph $G^*$, and fix roots $r \in V(G)$ and $q \in V(G^*)$. Dually, we consider $q$ as a basepoint of $G$ and $r$ as a basepoint of $G^*$. Let $T$ be a spanning tree of $G$, with dual tree $T^* \subset E(G^*)$. Then:
		\begin{enumerate}[label=(\roman*)]
			\item An edge $e \in E(G) \setminus T$ is externally semi-active with respect to $T$ if and only if its dual edge $e^* \in E(G^*)$ is internally semi-active with respect to $T^*$.
			\item An edge $e \in T$ is internally semi-active with respect to $T$ if and only if its dual $e^*$ is externally semi-passive with respect to $T^*$.
		\end{enumerate}
	\end{lemma}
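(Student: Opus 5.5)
The plan is to prove (i) directly from planar duality between fundamental cycles and fundamental cuts, and then deduce (ii) by applying (i) to the dual pair $(G^*, T^*)$, keeping track of how the orientation convention of Figure \ref{fig:dual_o} behaves under double dualization.

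For (i), fix $e \in E(G) \setminus T$ and let $C = \text{Cyc}_T(e)$. Working in the sphere $S^2 = \R^2 \cup \{\infty\}$, $C$ is a simple closed curve, so it splits the faces of $G$ (the vertices of $G^*$) into two sides. The first step is the standard fact that the edges of $G^*$ dual to $C$ are exactly $\text{Cut}_{T^*}(e^*)$. Indeed, $T^*$ contains no dual of an edge of $C \setminus \{e\} \subset T$, so $T^* \setminus \{e^*\}$ cannot cross $C$ at all. Its two components are therefore precisely the dual vertices on the two sides of $C$. In particular, the component of $T^* \setminus \{e^*\}$ containing $q$ is the set of faces on the same side of $C$ as $q$. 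So $e^*$ is internally semi-active in $T^*$ if and only if $e^*$ crosses $C$ from the $q$-side to the other side.

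The second step is to translate ``$C$ runs counterclockwise around $q$'' into a local statement: with that orientation of $C$, the face containing $q$ lies on the \emph{left} of $C$. This holds for any position of $q$, including the unbounded face, because we work on the sphere. Now suppose $e$ is externally semi-active, so its direction agrees with this orientation of $C$ and the $q$-side lies to the left of $e$. By the convention of Figure \ref{fig:dual_o} ($e$ pointing right, $e^*$ pointing down), $e^*$ always crosses $e$ from its left side to its right side. Hence $e^*$ points from the $q$-side to the non-$q$-side, i.e.\ away from $q$, so $e^*$ is internally semi-active. If instead $e$ is against the orientation of $C$, the same picture shows $e^*$ points toward $q$. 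This proves (i).

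For (ii), apply (i) to the plane digraph $G^*$ with tree $T^*$, root $q$, and basepoint $r$. The dual of $G^*$ is $G$ as an undirected plane graph, but the orientation it receives is reversed. Applying the rule of Figure \ref{fig:dual_o} twice rotates each edge by a half turn: $e^*$ points to the right of $e$, and $(e^*)^*$ points to the right of $e^*$, which is the direction $-e$. So (i) gives: $e^* \notin T^*$ is externally semi-active in $T^*$ if and only if $-e$ is internally semi-active in $T$. Reversing one edge flips its internal semi-activity, since the cut $\text{Cut}_T(e)$ and the root side do not depend on orientations. Therefore $e$ is internally semi-active if and only if $e^*$ is externally semi-passive.

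The main obstacle I expect is the orientation bookkeeping rather than the combinatorics. I will need to check carefully, against Figure \ref{fig:dual_o}, that ``$e^*$ crosses from the left of $e$ to its right,'' and that ``counterclockwise around $q$ on the sphere'' really means ``$q$ on the left'' in every case, including when $q$ or $r$ is the unbounded face. I would verify both conventions on a single explicit small example, such as a two-edge cycle, before writing the general argument.
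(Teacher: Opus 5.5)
Your proof is correct and follows the paper's argument. For (i), it is the same observation: $e$ agrees with the orientation of $\text{Cyc}_T(e)$ that puts $q$ on the left exactly when $e^*$ crosses out of $q$'s side. You work directly on the sphere instead of moving $q$ to the unbounded face, and you make the cycle--cut duality explicit. For (ii), your derivation from (i) via $(e^*)^* = -e$ is exactly the ``asymmetry'' the paper refers to when it says to switch the roles of $G$ and $G^*$.
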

	
	\begin{proof}
		Assume without loss of generality that $q$ lies in the unbounded face of $G$. Fix a spanning tree $T \subset E(G)$ and an edge $e \in E(G) \setminus T$. By our convention on dual orientations, the edge $e$ agrees with the clockwise orientation on $\text{Cyc}_T(e)$ in the plane (which is the counter-clockwise orientation around $q$) if and only if the dual edge $e^*$ points into the disk bounded by $\text{Cyc}_T(e)$. Since $q$ lies in the unbounded face of $\R^2 \setminus G$, this occurs exactly when $e^*$ is internally semi-active with respect to $T^*$. The second statement can be proved analogously---though note the asymmetry---by switching the roles of $G$ and $G^*$.
	\end{proof}
	
	For the special case of Tait digraphs, we introduce additional notation.
	
	\begin{defn}
		\label{def:plus_minus_acts}
		Let $D \subset \R^2$ be an alternating link diagram with Tait digraph $G = (V,E)$. Then we denote the edges of $G$ which pass through positive and negative crossings of $D$ by $E_+$ and $E_-$ respectively, and we refer to these as  {\em positive} and {\em negative edges}.\footnote{This should not be confused with the usual convention for Tait graphs, where edges are given signs based on crossing type rather than crossing sign.} For any spanning tree $T$ of $G$, we define
		\begin{align*}
			\iota_+(T) &= |\text{Int}(T) \cap E_+| \\
			\overline{\iota}_+(T) &= |\overline{\text{Int}}(T) \cap E_+| \\
			\varepsilon_-(T) &= |\text{Ext}(T) \cap E_-| \\
			\overline{\varepsilon}_-(T) &= |\overline{\text{Ext}}(T) \cap E_-|.
		\end{align*}
	\end{defn}
	
	We can now reframe Lemma \ref{lem:init_alex}.
	
	\begin{prop}
		\label{prop:activity_alex}
		Let $K$ be a link with alternating diagram $D \subset \R^2$, and $G$ its Tait digraph. Choose a root vertex $r \in V(G)$ and a basepoint $q$ in a face adjacent to $r$, and let $\mathcal{T}$ be the set of spanning trees of $G$. Then
		\begin{equation}
			\label{eq:sym_alex_one}
			\Delta_K(t) = \sum_{T \in \mathcal{T}} (t^{1/2})^{\iota_+(T) - \varepsilon_-(T)} (-t^{1/2})^{\overline{\varepsilon}_-(T) - \overline{\iota}_+(T)}.
		\end{equation}
	\end{prop}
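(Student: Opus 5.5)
We will derive Proposition \ref{prop:activity_alex} directly from Lemma \ref{lem:init_alex}: for each spanning tree $T$ we show that the product $\prod_{e \in E(G)} \omega_T(e)$ equals the monomial $(t^{1/2})^{\iota_+(T) - \varepsilon_-(T)} (-t^{1/2})^{\overline{\varepsilon}_-(T) - \overline{\iota}_+(T)}$. We do this by sorting the edges of $G$ into the cases of Definition \ref{def:wt_fn}, counting how many edges fall into each case, and comparing with the exponents.

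First the positive edges in $T$. An edge $e \in T \cap E_+$ points away from $r$ in $T$ exactly when it points away from the component of $T\setminus\{e\}$ containing $r$. By Definition \ref{def:int}, this means $e$ is internally semi-active. Hence such edges contribute $(t^{1/2})^{\iota_+(T)}(-t^{-1/2})^{\overline{\iota}_+(T)}$, and since $-t^{-1/2} = (-t^{1/2})^{-1}$ this equals $(t^{1/2})^{\iota_+(T)}(-t^{1/2})^{-\overline{\iota}_+(T)}$. Edges in $T \cap E_-$ and edges in $(E\setminus T)\cap E_+$ both have weight $1$ and can be ignored.

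Next the negative edges outside $T$. An edge $e \in E_- \setminus T$ has weight $t^{-1/2}$ when $e^*$ points away from $q$ in $T^*$, that is, when $e^*$ is internally semi-active with respect to $T^*$ rooted at $q$. By Lemma \ref{lem:int_ext}(i) this happens exactly when $e$ is externally semi-active with respect to $T$, with $q$ as basepoint. So these edges contribute $(t^{-1/2})^{\varepsilon_-(T)}(-t^{1/2})^{\overline{\varepsilon}_-(T)} = (t^{1/2})^{-\varepsilon_-(T)}(-t^{1/2})^{\overline{\varepsilon}_-(T)}$. Multiplying this with the contribution from the positive tree edges gives the summand in \eqref{eq:sym_alex_one}, and summing over $\mathcal{T}$ via Lemma \ref{lem:init_alex} finishes the proof.

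The one step needing care is matching conventions. We must check that ``points away from $r$ in $T$'' in Definition \ref{def:wt_fn} agrees with the notion of internal semi-activity in Definition \ref{def:int}; both describe the leaf end being the head, so they agree. We must also check that the roles of root and basepoint in Lemma \ref{lem:int_ext} (with $q$ as root of $G^*$ and basepoint of $G$) match the hypotheses here. This is where the orientation convention of Figure \ref{fig:dual_o} enters, and it is already built into Lemma \ref{lem:int_ext}. Everything else is bookkeeping.
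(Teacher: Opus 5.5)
Your proposal is correct and takes essentially the same route as the paper, which also uses Definitions \ref{def:int} and \ref{def:ext} together with Lemma \ref{lem:int_ext}(i) to rewrite $\omega_T$ edge by edge in terms of semi-activities, multiplies to obtain the monomial, and substitutes into Lemma \ref{lem:init_alex}. Your explicit exponent bookkeeping via $-t^{-1/2} = (-t^{1/2})^{-1}$ is exactly the computation the paper leaves implicit.
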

		
	\begin{proof}
		Using Definitions \ref{def:int} and \ref{def:ext} along with Lemma \ref{lem:int_ext}, the weight function $\omega_T$ of Definition \ref{def:wt_fn} can be rewritten:
		$$
		\omega_T(e) = \begin{cases}
			t^{1/2} & \text{$e$ is positive and internally semi-active with respect to $r$ and $T$.} \\
			-t^{-1/2} & \text{$e$ is positive and internally semi-passive.}  \\
			t^{-1/2} & \text{$e$ is negative and externally semi-active with respect to $q$ and $T$.} \\
			-t^{1/2} & \text{$e$ is negative and externally semi-passive.}  \\
			1 & \text{otherwise.}
		\end{cases}
		$$
		Thus, for any spanning tree $T \in \mathcal{T}$,
		$$
		\prod_{e \in E(G)} \omega_T(e) = (t^{1/2})^{\iota_+(T) - \varepsilon_-(T)}(-t^{1/2})^{ \overline{\varepsilon}_-(T) - \overline{\iota}_+(T)},
		$$
		and we can substitute the right side into Lemma \ref{lem:init_alex}.
	\end{proof}
	
	 This identity can be simplified further using the next lemma.
	
	\begin{lemma}
		\label{lem:murasugi}
		With notation as in Proposition \ref{prop:activity_alex}, let $T$ be a spanning tree of $G$. Then
		$$
		\sigma(K) = \varepsilon_-(T) + \overline{\varepsilon}_-(T) - \iota_+(T) - \overline{\iota}_+(T),
		$$
		where $\sigma(K)$ is the signature of the link $K$.
	\end{lemma}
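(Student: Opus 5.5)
The plan is first to observe that the right-hand side does not actually depend on $T$, and then to identify it with a classical signature formula for alternating diagrams. By Definitions \ref{def:int}, \ref{def:ext} and \ref{def:plus_minus_acts}, every edge of $T$ is either internally semi-active or semi-passive, and every edge of $E \setminus T$ is either externally semi-active or semi-passive. Hence
$$
\iota_+(T) + \overline{\iota}_+(T) = |T \cap E_+|, \qquad \varepsilon_-(T) + \overline{\varepsilon}_-(T) = |E_- \setminus T|.
$$
The right-hand side of the lemma is therefore
$$
|E_-| - |T \cap E_-| - |T \cap E_+| = n_-(D) - |T| = n_-(D) - |V(G)| + 1,
$$
where $n_-(D)$ is the number of negative crossings of $D$ and $|V(G)|$ is the number of shaded regions. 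So it suffices to prove the $T$-free identity
$$
\sigma(K) = n_-(D) - |V(G)| + 1.
$$

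To prove this identity I would apply the Gordon--Litherland formula to the spanning surface $F$ given by the shaded regions of $D$ (with shading as in Convention \ref{conv:type}). This gives
$$
\sigma(K) = \operatorname{sign}(\mathcal{G}_F) + \tfrac12 e(F),
$$
where $\mathcal{G}_F$ is the Goeritz form and the correction term $e(F)$ is a sum of local contributions $\pm 1$ over crossings. Since every crossing has the same type (Lemma \ref{lem:alt_crossing_signs}), two things follow:
\begin{itemize}
\item The Goeritz matrix is the reduced Laplacian of $G$ with every edge weighted by the same sign. It is therefore definite of rank $|V(G)| - 1$, because the reduced Laplacian of a connected graph is nonsingular by the matrix-tree theorem. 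Its signature is $\pm(|V(G)|-1)$, with the sign fixed by type I.
\item The local contribution to $e(F)$ at a type I crossing depends only on the crossing sign. It is nonzero exactly at crossings where the orientations are incoherent with the surface, and for type I these are the negative crossings (or the positive ones, depending on convention). This yields a term $\pm n_-$, or equivalently $\pm n_+$ after adjusting by $n = n_+ + n_-$.
\end{itemize}
As an alternative, I could quote Traczyk's formula $\sigma = s_A(D) - n_+(D) - 1$ for reduced alternating diagrams, or its $B$-state counterpart, identifying the number of state circles with $|V(G)|$ or $|V(G^*)|$. I could then convert between the two forms using Euler's formula $|V(G)| + |V(G^*)| = n + 2$.

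The main obstacle is getting the sign conventions to agree: the crossing-sign convention of Figure \ref{fig:crossing_shading}, which shading is called type I, the sign of the Goeritz form, and the signature normalization. I would settle all of these by checking the final formula on the left trefoil of Figure \ref{fig:ditait}. That diagram has three negative crossings and two shaded regions, so the formula predicts $\sigma = 3 - 2 + 1 = 2$, which is the signature of the left-handed trefoil in the convention making Theorem \ref{thm:alex_ids}'s specialization consistent. I would cross-check with the figure-eight knot, where the formula should give $0$.

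Finally, I would note that the identity also holds for non-reduced alternating diagrams. A nugatory crossing corresponds to a loop or a bridge in $G$, and one can check directly that it changes $n_- - |V(G)| + 1$ by zero. Alternatively, the Gordon--Litherland argument never used reducedness, so this step is only a sanity check.
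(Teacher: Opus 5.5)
Your reduction of the right-hand side to $n_-(D) - |V(G)| + 1$ is exactly the paper's first step, where it is written $|E_-| - \text{rk}(G) = \text{corank}(G) - |E_+|$. Gordon--Litherland is also the paper's tool. However, your Gordon--Litherland step attaches the data of one checkerboard surface to the other.

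The shaded surface $F$ deformation retracts onto $G$, so $\text{rk}\,H_1(F) = \text{corank}(G) = |E| - |V(G)| + 1$. Its Goeritz matrix is indexed by the \emph{unshaded} regions, so up to sign it is the reduced Laplacian of $G^*$, not of $G$. For the left trefoil, $F$ has $b_1 = 2$, while $|V(G)| - 1 = 1$. Likewise, for this type I surface the correction term comes from the \emph{positive} crossings and equals $-n_+$. This is the version $\sigma(K) = \sigma(S) - \#\{\text{positive type I}\} + \#\{\text{negative type II}\}$ that the paper uses.

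What you actually describe is the data of the \emph{unshaded} surface $F'$, which is type II at every crossing: form $\pm$ the reduced Laplacian of $G$, definite of rank $|V(G)|-1$, correction from the negative crossings. Fitting signs on the trefoil would still produce the right final formula, but only by silently switching surfaces. As stated for $F$, the rank claim and the location of the correction term are false, so the argument must be made consistent.

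Either consistent version works.
\begin{itemize}
\item Keeping $F$: its form is $+$ the reduced Laplacian of $G^*$. This is positive definite by your Laplacian/matrix-tree argument applied to $G^*$; the paper instead quotes Greene's result that type I checkerboard forms of alternating diagrams are positive definite. Then $\sigma(K) = \text{corank}(G) - n_+ = n_- - |V(G)| + 1$, which is exactly the paper's proof.
\item Using $F'$: its form is minus the reduced Laplacian of $G$, and the correction is $+n_-$. This gives $\sigma(K) = -(|V(G)| - 1) + n_-$ directly, a legitimate mirror-image route.
\end{itemize}

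Your Traczyk alternative is also fine once you check, for example on the right trefoil, that $s_A = |V(G^*)|$ in these conventions; then $s_A - n_+ - 1 = \text{corank}(G) - n_+$. Your nugatory-crossing remark is correct as well. By Lemma~\ref{lem:cut_cyc}, bridges of a Tait digraph are negative and loops are positive, so removing either leaves $n_- - |V(G)| + 1$ unchanged.
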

	
	\begin{proof}
		We first compute
		$$
		\text{rk}(G) = |T| = |T \cap E_+| + |T \cap E_-| = \big(\iota_+(T) + \overline{\iota}_+(T)\big) + \big(|E_-| -\varepsilon_-(T) -\overline{\varepsilon}_-(T)\big),
		$$
		so that
		$$
		\varepsilon_-(T) + \overline{\varepsilon}_-(T) - \iota_+(T) - \overline{\iota}_+(T) = |E_-| - \text{rk}(G) = \text{corank}(G) - |E_+|.
		$$

		On the other hand, let $S \subset S^3$ be the {\em checkerboard surface} corresponding to $G$. By definition, $S$ is constructed by connecting the shaded regions of $D$ with half-twists, as determined by crossings, so that $\partial S = K$. A famous result of Gordon and Litherland \cite[Theorem 6$''$]{gl78} states
		$$
		\sigma(K) = \sigma(S) - \#\{\text{positive type I crossings of $D$}\} + \#\{\text{negative type II crossings of $D$}\},
		$$
		where $\sigma(S)$ is the signature of the Gordon-Litherland pairing of $S$ (see \cite{gl78}). All crossings of $D$ are type I by Convention \ref{conv:type}, so the last term above vanishes and the second last equals $|E_+|$. Additionally, since $S$ is the type I checkerboard surface of an alternating link diagram, its Gordon-Litherland form is positive definite \cite[Proposition 4.1]{g17}. Therefore
		$$
		\sigma(S) = \text{rk}(H_1(S)) = \text{rk}(H_1(G)) = \text{corank}(G),
		$$
		and we conclude that 
		$$
		\sigma(K) = \text{corank}(G) - |E_+| = \varepsilon_-(T) + \overline{\varepsilon}_-(T) - \iota_+(T) - \overline{\iota}_+(T).
		$$
	\end{proof}
	
	\begin{thm}
		\label{thm:activity_alex}
		Let $K$ be oriented link with alternating diagram $D$. With notation as in Proposition \ref{prop:activity_alex}, we have
		$$
		\Delta_K(t) = (-t)^{-\sigma(K)/2} \sum_{T \in \mathcal{T}} (-t)^{\varepsilon_-(T) - \iota_+(T)} = (-t)^{-\sigma(K)/2} \sum_{T \in \mathcal{T}} (-t)^{\overline{\varepsilon}_-(T) - \overline{\iota}_+(T)}.
		$$
	\end{thm}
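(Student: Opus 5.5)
The plan is to derive both equalities directly from Proposition~\ref{prop:activity_alex}. We rewrite each summand using the linear relation among the four activity counts given by Lemma~\ref{lem:murasugi}, and then absorb overall unit factors using Remark~\ref{rem:indet_alex}. Fix $T \in \mathcal{T}$ and abbreviate $a = \iota_+(T)$, $b = \varepsilon_-(T)$, $c = \overline{\varepsilon}_-(T)$, $d = \overline{\iota}_+(T)$. By Proposition~\ref{prop:activity_alex} the summand for $T$ is
$$
(t^{1/2})^{a-b}(-t^{1/2})^{c-d} = (-1)^{c-d}\, t^{(a-b+c-d)/2},
$$
and by Lemma~\ref{lem:murasugi} we have $\sigma(K) = b + c - a - d$, independently of $T$.

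For the second expression in the theorem, I would eliminate $a - b$ using $a - b = (c-d) - \sigma(K)$. Substituting, the exponent of $t$ becomes $(c-d) - \sigma(K)/2$. Each summand is then $t^{-\sigma(K)/2}(-t)^{c-d}$, so
$$
\Delta_K(t) = t^{-\sigma(K)/2}\sum_{T}(-t)^{\overline{\varepsilon}_-(T)-\overline{\iota}_+(T)}.
$$
This differs from the claimed formula only by the constant factor $(-1)^{-\sigma(K)/2}$, which is a unit. By Remark~\ref{rem:indet_alex} it may be inserted, since $\Delta_K$ is only defined up to units of $\Z[t^{1/2},t^{-1/2}]$.

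For the first expression, I would instead eliminate $c - d$ using $c - d = \sigma(K) + a - b$. The summand becomes $(-1)^{\sigma(K)}\, t^{\sigma(K)/2}(-t)^{a-b}$, which gives
$$
\Delta_K(t) = (-1)^{\sigma(K)} t^{\sigma(K)/2}\sum_T (-t)^{-(\varepsilon_-(T)-\iota_+(T))}.
$$
To reach the stated form, I would apply the symmetry $\Delta_K(t) = \Delta_K(t^{-1})$ noted after Theorem~\ref{thm:kauff}. Replacing $t$ by $t^{-1}$ and using $(-t^{-1})^{-k} = (-t)^k$ turns this into $(-1)^{\sigma(K)} t^{-\sigma(K)/2}\sum_T(-t)^{\varepsilon_-(T)-\iota_+(T)}$. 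Again, this agrees with $(-t)^{-\sigma(K)/2}\sum_T(-t)^{\varepsilon_-(T)-\iota_+(T)}$ up to a sign unit.

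The main (mild) obstacle is bookkeeping of signs and half-integer powers. The expression $(-t)^{-\sigma(K)/2}$ involves $(-1)^{1/2}$ when $\sigma(K)$ is odd, so I would state explicitly that both equalities hold up to multiplication by a unit, per Remark~\ref{rem:indet_alex}. One can also check the normalization against the evident symmetry of the right-hand sides. Everything else is direct substitution.
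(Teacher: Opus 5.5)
Your proof is correct and follows essentially the same route as the paper: substitute Lemma~\ref{lem:murasugi} into the exponents of Proposition~\ref{prop:activity_alex}, use $\Delta_K(t)=\Delta_K(t^{-1})$ only for the first expression, and work up to units as in Remark~\ref{rem:indet_alex}. The only difference is that you carry the signs through directly, whereas the paper passes to $|\Delta|_K$ via Lemma~\ref{lem:unsigned_delt} and then restores the alternating signs; your caveat that $(-t)^{-\sigma(K)/2}$ is only meaningful up to a unit when $\sigma(K)$ is odd applies equally to the paper's statement.
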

	
	\begin{proof}
		By Lemma \ref{lem:unsigned_delt} and Proposition \ref{prop:activity_alex}, the {\em unsigned} symmetrized Alexander polynomial is equal to
		$$
			|\Delta|_K(t) = \sum_{T \in \mathcal{T}} (t^{1/2})^{\iota_+(T) - \overline{\iota}_+(T) - \varepsilon_-(T) + \overline{\varepsilon}_-(T)}.
		$$
		Applying Lemma \ref{lem:murasugi} gives
		\begin{equation}
			\label{eq:ref_one}
		\iota_+(T) - \overline{\iota}_+(T) - \varepsilon_-(T) + \overline{\varepsilon}_-(T) = \sigma(K) + 2( \iota_+(T) - \varepsilon_-(T)),
		\end{equation}
		or alternatively
		\begin{equation}
			\label{eq:ref_two}
		\iota_+(T) - \overline{\iota}_+(T) - \varepsilon_-(T) + \overline{\varepsilon}_-(T) = -\sigma(K) + 2( \overline{\varepsilon}_-(T) - \overline{\iota}_+(T)).
		\end{equation}
		From the first identity, along with the symmetry $\Delta(t) = \Delta(t^{-1})$, we obtain
		$$
		|\Delta|_K(t) = t^{-\sigma(K)/2} \sum_{T \in \mathcal{T}} t^{\varepsilon_-(T) - \iota_+(T)},
		$$
		while the second identity yields
		$$
		|\Delta|_K(t) = t^{-\sigma(K)/2} \sum_{T \in \mathcal{T}} t^{\overline{\varepsilon}_-(T) - \overline{\iota}_+(T)}.
		$$
		The result follows from these by once again applying Lemma \ref{lem:unsigned_delt}.
	\end{proof}
	
	A link is {\em special alternating} if it admits an alternating diagram in which all crossings are positive---this diagram is also called a {\em special alternating diagram}. In the case of special alternating links, Theorem \ref{thm:activity_alex} recovers a result of Murasugi and Stoimenow.
	
	\begin{cor}[{\cite[Theorem 2]{must03}}]
		\label{cor:alex_special}
		Let $D$ be an oriented, special alternating diagram of a link $K$, and let $G = (V,E)$ be its Tait digraph. Then the symmetrized Alexander polynomial of $K$ is given by
		$$
			\Delta_K(t)  = (-t)^{\sigma(K)/2} \sum_{T \in \mathcal{T}} (-t)^{\iota(T)}.
		$$
	\end{cor}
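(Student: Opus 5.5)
This is a direct specialization of Theorem \ref{thm:activity_alex}, so the plan is to substitute the special alternating hypothesis into that theorem and simplify. Since $D$ is special alternating, every crossing is positive, so $E_- = \emptyset$ and $E_+ = E$. Consequently, for every spanning tree $T$ we have $\varepsilon_-(T) = \overline{\varepsilon}_-(T) = 0$ by Definition \ref{def:plus_minus_acts}. We also have $\iota_+(T) = \iota(T)$ and $\overline{\iota}_+(T) = \overline{\iota}(T)$, because every internally semi-active edge is positive.

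The cleanest route uses the second formula of Theorem \ref{thm:activity_alex}:
$$
\Delta_K(t) = (-t)^{-\sigma(K)/2} \sum_{T \in \mathcal{T}} (-t)^{-\overline{\iota}(T)}.
$$
Lemma \ref{lem:murasugi} gives $\sigma(K) = -\iota(T) - \overline{\iota}(T) = -|T| = -\text{rk}(G)$. Substituting $\overline{\iota}(T) = \text{rk}(G) - \iota(T) = -\sigma(K) - \iota(T)$ into the exponent, each summand becomes $(-t)^{\sigma(K)/2 + \sigma(K) + \iota(T)}$ up to the prefactor. Collecting everything then yields $(-t)^{\sigma(K)/2} \sum_T (-t)^{\iota(T)}$.

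Alternatively, one can use the first formula, which gives $(-t)^{-\sigma(K)/2}\sum_T (-t)^{-\iota(T)}$, and then apply the symmetry $\Delta_K(t) = \Delta_K(t^{-1})$. The sign ambiguity in $(-t)^{1/2}$-type powers is harmless by Remark \ref{rem:indet_alex}; note that $\sigma(K) = -\text{rk}(G)$ is an integer, possibly odd.

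The only real care needed is the exponent bookkeeping. In particular, one must double-check that the sign conventions match the stated form $(-t)^{\sigma(K)/2}$ rather than $(-t)^{-\sigma(K)/2}$. This follows from $\sigma(K) = -\iota - \overline{\iota}$, which flips the role of $\iota$ and $\overline{\iota}$ relative to the general theorem. That is the one step I would verify carefully; everything else is immediate.
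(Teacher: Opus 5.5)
Your argument is correct and is essentially the paper's proof. The paper sets $\varepsilon_-=0$ and $\iota_+=\iota$ in the first formula of Theorem~\ref{thm:activity_alex} and then applies $t\mapsto t^{-1}$, which is exactly your ``alternative.'' Your primary route instead uses the second formula together with $\sigma(K)=-\text{rk}(G)$ from Lemma~\ref{lem:murasugi}, so it replaces the symmetry of $\Delta_K$ by the identity $\overline{\iota}(T)=\text{rk}(G)-\iota(T)$.

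There is one bookkeeping slip. Once the prefactor is included, the exponent is $-\sigma(K)/2+\sigma(K)+\iota(T)=\sigma(K)/2+\iota(T)$, not $\sigma(K)/2+\sigma(K)+\iota(T)$ as you wrote. Your final formula is nevertheless right.
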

	
	\begin{proof}
		By definition $E = E_+$, so $\overline{\varepsilon}_-(T) = 0$ and $\iota_+(T) = \iota(T)$ for any tree $T$.  We then consider the first equality of Theorem \ref{thm:activity_alex}, and use the symmetry $\Delta_K(t) = \Delta_K(t^{-1})$.
	\end{proof}
	
	An analogous statement using external activity holds if all crossings of $D$ are negative.
	
	\subsection{A digression}
	\label{sec:dig}
	
	The weights in Figure \ref{fig:edge_weights} are not the only ones which can be used to compute the Alexander polynomial as a weighted dimer count. An alternate set of weights, shown in Figure \ref{fig:alt_weights}, computes a {\em non-symmetrized} version \cite{kauf83, codaru14} (cf.~Remark \ref{rem:indet_alex}). Carrying out the above spanning tree reformulation with this other weight function, one obtains the identity
	\begin{equation}
		\label{eq:non_sym_alex}
		\Delta_K(t) = \sum_{T \in \mathcal{T}} (-t)^{\iota(T) + \overline{\varepsilon}(T)} = \sum_{T \in \mathcal{T}} (-t)^{\iota(T) + \text{corank}(G) - \varepsilon(T)} \sim \sum_{T \in \mathcal{T}} (-t)^{\iota(T) - \varepsilon(T)}.
	\end{equation}
	Here $K$ is a link with alternating diagram $D$, $G$ is the (rooted, basepointed) Tait graph of $D$, and $\mathcal{T}$ is its set of spanning trees. The symbol $\sim$ indicates that the two quantities agree up to multiplication by a unit of $\Z[t,t^{-1}]$---as Remark \ref{rem:indet_alex} discusses, this is a typical indeterminacy for the Alexander polynomial.
	
	\begin{figure}
		\centering
		
		\begin{subfigure}[t]{0.4\textwidth}
			\centering
			\begin{tikzpicture}
				\draw[-{Stealth}, ultra thick, red!30] (-1,-1) -- (1,1);
				\draw[ultra thick, red!30] (1,-1) -- (0.15,-0.15);
				\draw[{Stealth}-, ultra thick, red!30] (-1,1) -- (-0.15,0.15);
				
				\draw[thick] (0,0) -- node[above] {$1$} (1,0);
				\draw[thick] (0,0) -- node[above] {$t$} (-1,0);
				\draw[thick] (0,0) -- (0,1);
				\draw[thick] (0,0) -- (0,-1);
				\draw[fill=black] (-1,0) circle (1.5pt);
				\draw[fill=black] (1,0) circle (1.5pt);
				\draw[fill=black] (0,1) circle (1.5pt);
				\draw[fill=black] (0,-1) circle (1.5pt);
				\draw[fill=white] (0,0) circle (1.5pt);
				\node[anchor=west] at (1.1, 0.75) {$-t$};
				\node[anchor=west] at (1.1, -0.75) {$1$};
				\draw[<-, thin] (0.1, 0.75) -- (1.1, 0.75);
				\draw[<-, thin] (0.1, -0.75) -- (1.1, -0.75);
			\end{tikzpicture}
			\caption{Positive crossings}
		\end{subfigure}
		\begin{subfigure}[t]{0.4\textwidth}
			\centering
			\begin{tikzpicture}
				\draw[-{Stealth}, ultra thick, red!30] (1,-1) -- (-1,1);
				\draw[ultra thick, red!30] (-1,-1) -- (-0.15,-0.15);
				\draw[{Stealth}-, ultra thick, red!30] (1,1) -- (0.15,0.15);
				
				\draw[thick] (0,0) -- node[above] {$1$} (1,0);
				\draw[thick] (0,0) -- node[above] {$t$} (-1,0);
				\draw[thick] (0,0) -- (0,1);
				\draw[thick] (0,0) -- (0,-1);
				\draw[fill=black] (-1,0) circle (1.5pt);
				\draw[fill=black] (1,0) circle (1.5pt);
				\draw[fill=black] (0,1) circle (1.5pt);
				\draw[fill=black] (0,-1) circle (1.5pt);
				\draw[fill=white] (0,0) circle (1.5pt);
				\node[anchor=west] at (1.1, 0.75) {$-1$};
				\node[anchor=west] at (1.1, -0.75) {$t$};
				\draw[<-, thin] (0.1, 0.75) -- (1.1, 0.75);
				\draw[<-, thin] (0.1, -0.75) -- (1.1, -0.75);
			\end{tikzpicture}
			\caption{Negative crossings}
		\end{subfigure}
		
		\caption{Non-symmetrized edge weights}
		\label{fig:alt_weights}
	\end{figure}

	We prefer Theorem \ref{thm:activity_alex} over (\ref{eq:non_sym_alex}) in this paper for its stronger symmetry and invariance properties---for example, changing the root or basepoint in (\ref{eq:non_sym_alex}) can shift the result by a unit of $\Z[t, t^{-1}]$. However, (\ref{eq:non_sym_alex}) does lead to an amusing observation. For any plane digraph $G$ with a fixed vertex and basepoint, one can define a two-variable polynomial $Q_G$ by
	$$
	Q_G(x,y) = \sum_{T \in \mathcal{T}} x^{\iota(T)}y^{\varepsilon(T)}.
	$$
	Here $\mathcal{T}$ is the set of spanning trees as usual, and we've suppressed the root and basepoint from our notation. Equation (\ref{eq:non_sym_alex}) then says that if $G$ is the Tait digraph of an alternating diagram of a link $K$, and if the root and basepoint are adjacent, then
	\begin{equation}
		\label{eq:alex_tutte}
		\Delta_K(t) = Q_G(-t, -t^{-1}).
	\end{equation}
	
	We now recall that the {\em Tutte polynomial}, a foundational invariant in graph theory, can be defined for any (undirected, non-rooted) graph $G$ by
	$$
	\widehat{T}_G(x,y) = \sum_{T \in \mathcal{T}} x^{\widehat{\iota}(T)}y^{\widehat{\varepsilon}(T)}.
	$$
	The quantities $\widehat{\iota}$ and $\widehat{\varepsilon}$ are different notions of internal and external activity which depend on a choice of linear ordering of the edges, though the resulting polynomial does not \cite{tut04}; we omit the definitions. It is a famous result of Thistlethwaite that if $K$ is a link with alternating diagram $D$, and $G$ is the (undirected) Tait graph of $D$, then the Jones polynomial $J_K$ of $K$ satisfies
	\begin{equation}
		\label{eq:jones_tutte}
		J_K(t) = \widehat{T}_G(-t, -t^{-1})
	\end{equation}
	up to multiplication by a unit of $\Z[t,t^{-1}]$ \cite{t87}. We do not have an explanation for the similarity between (\ref{eq:alex_tutte}) and (\ref{eq:jones_tutte}).
	
\section{The Alexander Polynomial Refinement}
	\label{sec:act_poly}
	
	\subsection{Definition and properties}
	\label{sec:def_props}
	
	Our work in the previous section suggests a generalization of the Alexander polynomial for alternating links.
	
	\begin{defn}
		\label{def:poly}
		Let $K$ be a link with alternating diagram $D$, and $G$ its Tait digraph. Fix a root $r$ of $G$ and a basepoint $q \in \R^2 \setminus G$ in a face adjacent to $r$, and let $\mathcal{T}$ be the set of all spanning trees of $G$. Then we define $P_K \in \Z[x^{-1}, y^{-1}, z, w]$ to be the Laurent polynomial
		$$
			P_K(x, y, z, w) = \sum_{T \in \mathcal{T}} x^{-\iota_+(T)} y^{-\overline{\iota}_+(T)} z^{\varepsilon_-(T)} w^{\overline{\varepsilon}_-(T)},
		$$
		where the quantities $\iota_+(T)$, $\overline{\iota}_+(T)$, $\varepsilon_-(T)$ and $\overline{\varepsilon}_-(T)$ are as in Definition \ref{def:plus_minus_acts}. Additionally, for any tree $T \in \mathcal{T}$, we call $x^{-\iota_+(T)} y^{-\overline{\iota}_+(T)} z^{\varepsilon_-(T)}w^{\overline{\varepsilon}_-(T)}$ the {\em weight} of $T$ in $P$.
	\end{defn}
	
	It will sometimes be convenient to be explicit about our choice of alternating diagram $D$ or root $r$ and basepoint $q$, or to think of $P$ as a function of the Tait digraph $G$. In these situations, we write $P_K$ variously as $P_D$, $P_G$, or $P_{(G,r,q)}$. Our choice to suppress this data from our notation in Definition \ref{def:poly} is justified by the next theorem, which we will prove in Sections \ref{sec:root_indep} and \ref{sec:link_invar}.
	
	\begin{thm}
		\label{thm:invariant}
		The polynomial $P_K$ does not depend on the choice of alternating diagram, or on the choice of adjacent root and basepoint. In other words, $P_K$ is an invariant of the alternating link $K$.
	\end{thm}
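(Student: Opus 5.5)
The proof splits into two independent parts, as the paper indicates: independence of the adjacent pair $(r,q)$ for a fixed alternating diagram $D$, and independence of $D$ itself.

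\emph{Root and basepoint independence.} I would first rewrite $P_{(G,r,q)}$ as a weighted dimer count on the reduced double overlay $\mathcal{H}^{red}_{(r,q)}$, hence as a determinant. Via the bijection $\Phi$ of Theorem \ref{thm:kpw}, each tree $T$ corresponds to the matching taking leaf halves of $T$ and $T^*$. By Lemma \ref{lem:weight_comp} and Lemma \ref{lem:int_ext}, one can weight the edges of $\mathcal{H}$ so that this matching's weight is exactly the weight of $T$ in $P$. For a positive crossing, the head half of $e$ gets $x^{-1}$, the tail half gets $y^{-1}$, and the dual halves get $1$. For a negative crossing, the head half of $e^*$ gets $z$, the tail half gets $w$, and the halves of $e$ get $1$. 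This replaces $t^{1/2},-t^{-1/2},t^{-1/2},-t^{1/2}$ by $x^{-1},y^{-1},z,w$.

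Then $P$ is the permanent of a bipartite adjacency matrix $A_{(r,q)}$. Using Kasteleyn-type signs, which are available because all crossings are type I, the permanent becomes a determinant of the $\mathcal{V}_1\times\mathcal{V}_2$ matrix $A$ with the rows $r,q$ deleted. The full matrix has two more rows than columns. Independence of the deleted pair should follow from a linear relation among the rows of $A$: with appropriate signs and monomial coefficients, the $G$-vertex rows sum to zero and the $G^*$-vertex rows sum to zero. This is the analogue of the Fox-matrix relations behind Kauffman's proof of Theorem \ref{thm:kauff}.

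I would check these relations crossing by crossing, since each column has exactly four nonzero entries. Changing $r$ (or $q$) then changes the minor only by a sign, which is fixed by positivity of the coefficients. The catch is that such relations usually force cofactors to agree only up to monomial factors. So I would verify directly that the coefficients can be chosen equal to $1$, perhaps by moving between adjacent pairs one step at a time around a crossing. I expect this to be the delicate part of this half of the argument.

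\emph{Diagram independence.} By the Menasco--Thistlethwaite flyping theorem \cite{meth91}, any two reduced alternating diagrams of $K$ are related by flypes. Nonreduced diagrams are handled separately: a nugatory crossing is a bridge or loop in $G$, lying in every tree or in none. I would check that its contribution is trivial, namely that a bridge is always internally semi-active or passive in a way that cancels, or that it is removed by choosing $r$ suitably. Since the root is already free, it can be placed conveniently in each case.

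For a flype, the Tait digraph has the form sketched in the paper's figures. A subgraph $G_1$ attached at two vertices is reflected, and an edge $e$ moves from one side to the other, from a 2-sum with $G_2$ to a 2-sum on the other pair. Using the root freedom, I would place $r$ at the cut vertex marked $*$. Spanning trees then decompose into trees or 2-forests of $G_1$ and $G_2$, giving
\[
P_G = P_{G_1}\cdot(\text{terms from } G_2, e) + (\text{2-forest terms of } G_1)\cdot(\cdots).
\]
The check is that reflecting $G_1$ (the operation $\rho$) together with moving $e$ preserves each factor. Reflection reverses the planar orientation, which swaps semi-active and semi-passive external cycles, while the orientation change on $\rho(G_1)$ compensates for this.

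I expect the main obstacle to be exactly this flype computation, and in particular tracking how external activity of $G_1$'s non-tree edges changes when the basepoint's relative position and the plane orientation flip. My first attempt would use the pictures $\tikzone$ versus $\tikzonef$, reducing to the identities $P_{G_1}=P_{\rho(G_1)}$ with root and basepoint at the marked positions, which root independence lets me choose.
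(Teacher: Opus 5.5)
\textbf{Root and basepoint independence: the key step fails.} Your argument needs a relation among the $G$-vertex rows alone, and another among the face rows alone. In the $4$-variable matrix, neither exists once $D$ has crossings of both signs (already for $4_1$).

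To see this, take the column of a positive edge $e$ from $t$ to $h$. Its only $G$-vertex entries are $x^{-1}$ at $h$ and $-y^{-1}$ at $t$ (Lemma \ref{lem:a_signs}). So any relation $\sum_{v\in V}\lambda_v\,\mathrm{row}_v=0$, with any choice of signs, forces $\lambda_h=\pm(x/y)\lambda_t$. Since $E_+$ is a union of directed cycles (Lemma \ref{lem:cut_cyc}), going once around a cycle of length $\ell$ gives $\lambda=\pm(x/y)^{\ell}\lambda$. Hence $\lambda$ vanishes at every vertex meeting a positive edge. The negative columns, with entries $\pm1$ at both endpoints, then propagate $\lambda\equiv0$ by connectivity. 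Dually, the faces beside a negative edge carry $z$ and $-w$, and $E_-$ is a union of directed cuts, so there is no nonzero face-only relation once $D$ has a negative crossing. The problem is therefore not a monomial ambiguity to be repaired afterwards. It is also a warning sign that nothing in your outline uses the adjacency of $r$ and $q$, which the $4_1$ example shows is essential.

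The left kernel is still two-dimensional, but its vectors mix vertex and face coordinates. The missing idea is Lemma \ref{lem:key}:
\begin{itemize}
\item it gives a kernel vector $\nu$ with prescribed $\nu(r)$ and $\nu(q)$;
\item $\nu$ is constant across positive edges and satisfies $\nu(v)=(w-z)\nu(f)-\nu(u)$ across negative edges;
\item it is built by induction on $|E_-|$, splitting $G$ along one directed cut of negative edges, and it is well defined by Corollary \ref{cor:even_cut}.
\end{itemize}
Setting $\nu(q)=0$ and $\nu(r)=1$ forces $\nu(r')=\pm1$ for the other endpoint $r'$ of an edge bounding $q$. This is exactly where adjacency enters, and why no monomial factor appears. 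Bordering $A$ by two rows that annihilate $\nu$ then gives $\det A^{red}_{(r,q)}\pm\det A^{red}_{(r',q)}=0$. Moving $q$ follows by mirroring.

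\textbf{Diagram independence: right outline, two steps to repair.} Your outline matches the paper: Menasco--Thistlethwaite, the decomposition of Lemma \ref{lem:two_cut}, and the reflection of Lemma \ref{lem:graph_flip}.

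First, nugatory crossings. A bridge's contribution neither cancels nor can be moved away by re-rooting: a positive bridge multiplies every tree by $x^{-1}$ or $y^{-1}$ (Lemma \ref{lem:bridge}). What actually holds is that a bridge lies on no cycle, so by Lemma \ref{lem:cut_cyc} it is negative and contributes nothing; loops are handled by mirroring. Removing the crossing also flips one side, so the new Tait digraph is $\rho(G_1)\cup_{v_1\sim v_2}G_2$, not $G/e$. You therefore also need Proposition \ref{prop:sum} and Lemma \ref{lem:graph_flip}.

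Second, flypes. Root independence holds only for Tait digraphs of alternating diagrams. Pieces such as $G_1$ or $G\setminus E_2$ generally are not, and for them $P$ does depend on the root and basepoint, so you cannot choose these freely. In the paper the only re-rooting is on $G/G_1$, which is a Tait digraph only for orientations (a) and (b) of Figure \ref{fig:flype_ors}. The other pieces are handled by reflecting with root and basepoint carried along, isotoping through infinity, and sliding $e$ with Lemma \ref{lem:bridge}. Orientation (c) is reduced to (b) by flyping the other tangle and then rotating the whole diagram (Corollary \ref{cor:diag_flip}).
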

	
	The signs of the exponents in Definition \ref{def:poly} are motivated by Lemma \ref{lem:murasugi}, which implies:
	
	\begin{prop}
		\label{prop:hom}
		The polynomial $P_K$ is homogeneous for any alternating link $K$, with degree equal to its signature $\sigma(K)$.
	\end{prop}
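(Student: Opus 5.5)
The plan is to verify homogeneity term by term, using Definition \ref{def:poly} together with Lemma \ref{lem:murasugi}. Fix an alternating diagram $D$ of $K$, its Tait digraph $G$, a root $r$ and an adjacent basepoint $q$. Each spanning tree $T \in \mathcal{T}$ contributes the monomial
$$
x^{-\iota_+(T)} y^{-\overline{\iota}_+(T)} z^{\varepsilon_-(T)} w^{\overline{\varepsilon}_-(T)},
$$
whose total degree is
$$
\varepsilon_-(T) + \overline{\varepsilon}_-(T) - \iota_+(T) - \overline{\iota}_+(T).
$$
By Lemma \ref{lem:murasugi}, this quantity equals $\sigma(K)$ for every spanning tree $T$. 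So every monomial in the sum has degree $\sigma(K)$.

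It remains to check that $P_K$ is a nonzero polynomial, so that the statement about its degree is meaningful. All coefficients in Definition \ref{def:poly} are nonnegative, so monomials cannot cancel. Moreover, $\mathcal{T}$ is nonempty because $G$ is connected: the link is non-split by Convention \ref{conv:split}, so its diagram and hence its Tait graph are connected. Therefore $P_K \neq 0$, and it is homogeneous of degree $\sigma(K)$.

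Finally, the conclusion is stated for the invariant $P_K$. Since $\sigma(K)$ is a link invariant, the computation above gives the same degree for every choice of diagram, root and basepoint, so no appeal to Theorem \ref{thm:invariant} is needed for the degree itself. I expect no real obstacle. The only substantive input is Lemma \ref{lem:murasugi}, namely the identity $\sigma(K) = \text{corank}(G) - |E_+|$ coming from Gordon–Litherland, and that is already established.
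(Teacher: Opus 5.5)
Your proposal is correct and matches the paper's approach: the paper also obtains the proposition directly from Lemma~\ref{lem:murasugi}, since every tree's monomial has degree $\varepsilon_-(T) + \overline{\varepsilon}_-(T) - \iota_+(T) - \overline{\iota}_+(T) = \sigma(K)$. Your extra check that $P_K \neq 0$ (nonnegative coefficients, and a nonempty tree set because the Tait graph is connected) is a harmless addition that the paper leaves implicit.
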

	
	Furthermore, by Proposition \ref{prop:activity_alex} and Theorem \ref{thm:activity_alex}, the Alexander polynomial can be recovered from $P_G$ in no less than three distinct ways.
	
	\begin{prop}
		\label{prop:three_alex}
		For any alternating link $K$ with symmetrized Alexander polynomial $\Delta_K$, we have
		\begin{align*}
		\Delta_K(t) &= (-t)^{-\sigma(K)/2}P_K(-t, 1, -t, 1), \\
		\Delta_K(t) &= (-t)^{-\sigma(K)/2}P_K(1, -t, 1, -t), \text{ and} \\
		\Delta_K(t) &= P_K(t^{-1/2}, -t^{1/2}, t^{-1/2}, -t^{1/2}).
		\end{align*}
	\end{prop}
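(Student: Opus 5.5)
We substitute directly into Definition \ref{def:poly} and compare term by term with Proposition \ref{prop:activity_alex} and Theorem \ref{thm:activity_alex}. Fix an alternating diagram $D$ of $K$ with Tait digraph $G$, an adjacent root $r$ and basepoint $q$. We compute $P_K = P_{(G,r,q)}$ using this data; by Theorem \ref{thm:invariant}, any choice gives the same polynomial, and in any case the identities we derive hold for every choice. For a spanning tree $T$, its weight is $x^{-\iota_+(T)} y^{-\overline{\iota}_+(T)} z^{\varepsilon_-(T)} w^{\overline{\varepsilon}_-(T)}$, so each specialization becomes a sum over $\mathcal{T}$ of a monomial in $t$.

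For the third identity we substitute $x = z = t^{-1/2}$ and $y = w = -t^{1/2}$. The weight of $T$ becomes
$$
(t^{1/2})^{\iota_+(T)} (-t^{1/2})^{-\overline{\iota}_+(T)} (t^{-1/2})^{\varepsilon_-(T)} (-t^{1/2})^{\overline{\varepsilon}_-(T)} = (t^{1/2})^{\iota_+(T) - \varepsilon_-(T)} (-t^{1/2})^{\overline{\varepsilon}_-(T) - \overline{\iota}_+(T)}.
$$
This is exactly the summand in (\ref{eq:sym_alex_one}), so summing over $\mathcal{T}$ gives $\Delta_K(t)$ by Proposition \ref{prop:activity_alex}.

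For the first identity we substitute $(x,y,z,w) = (-t,1,-t,1)$. The weight of $T$ becomes $(-t)^{\varepsilon_-(T) - \iota_+(T)}$, so
$$
(-t)^{-\sigma(K)/2} P_K(-t,1,-t,1) = (-t)^{-\sigma(K)/2}\sum_{T \in \mathcal{T}} (-t)^{\varepsilon_-(T)-\iota_+(T)},
$$
which is the first expression of Theorem \ref{thm:activity_alex}. Similarly, the substitution $(1,-t,1,-t)$ turns each weight into $(-t)^{\overline{\varepsilon}_-(T) - \overline{\iota}_+(T)}$, and the second expression of Theorem \ref{thm:activity_alex} gives the second identity.

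There is essentially no obstacle here; the only point needing care is the sign convention. Theorem \ref{thm:activity_alex} is stated with the factor $(-t)^{-\sigma(K)/2}$, which matches the present statement, while the introduction's Theorem \ref{thm:alex_ids} displays $(-t)^{\sigma(K)/2}$. If a discrepancy arises, we reconcile it using the symmetry $\Delta_K(t)=\Delta_K(t^{-1})$ together with the unit indeterminacy of Remark \ref{rem:indet_alex}. We also interpret $(-t)^{\pm\sigma(K)/2}$ in the same way Theorem \ref{thm:activity_alex} does, so that our identities are literally the same as that theorem's.
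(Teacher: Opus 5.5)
Your proof is correct and is the paper's own argument: the paper obtains this proposition by substituting into Definition \ref{def:poly} and reading off Proposition \ref{prop:activity_alex} for the third identity and the two expressions of Theorem \ref{thm:activity_alex} for the first two, exactly as you do. Your closing concern about signs is unnecessary, since you cite Theorem \ref{thm:activity_alex} directly and its factor $(-t)^{-\sigma(K)/2}$ matches the statement; the $(-t)^{\sigma(K)/2}$ in the introduction's Theorem \ref{thm:alex_ids} is just inconsistent with the body and plays no role in your argument.
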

	
	Three additional identities are given by applying the involution $t \mapsto t^{-1}$ to the three above. If $K$ is a special alternating link, then $P_K$ is completely determined by $\Delta_K$ and $\sigma(K)$.
	
	\begin{cor}
		\label{cor:special_p}
		Let $K$ be a link with special alternating diagram $D$, and suppose the symmetrized Alexander polynomial of $K$ is given by
		$$
		\Delta_K = \sum_{i = -n}^n a_i t^{i/2}.
		$$
		for some $n \geq 0$ and integers $a_{-n}, a_{1 - n}, \dots, a_n$. Then
		$$
		P_K(x,y,z,w) = P_K(x,y,1,1) = x^{\sigma(K)/2}y^{\sigma(K)/2}\sum_{i = -n}^n |a_i| x^{i/2}y^{-i/2}.
		$$
	\end{cor}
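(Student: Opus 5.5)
Since $D$ is special alternating, every crossing is positive, so $E = E_+$ and $E_- = \emptyset$. Consequently $\varepsilon_-(T) = \overline{\varepsilon}_-(T) = 0$ for every spanning tree $T$. Definition \ref{def:poly} then gives
$$
P_K(x,y,z,w) = \sum_{T \in \mathcal{T}} x^{-\iota_+(T)} y^{-\overline{\iota}_+(T)},
$$
which does not involve $z$ or $w$. This proves the first equality $P_K(x,y,z,w) = P_K(x,y,1,1)$. Here I use Theorem \ref{thm:invariant} to compute $P_K$ from the special diagram $D$.

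For the second equality, I will use Lemma \ref{lem:murasugi} with $E_- = \emptyset$. It gives $\iota_+(T) + \overline{\iota}_+(T) = -\sigma(K)$ for every $T$. Writing $\overline{\iota}_+(T) = -\sigma(K) - \iota_+(T)$, each term becomes
$$
x^{-\iota_+(T)} y^{\sigma(K) + \iota_+(T)} = x^{\sigma(K)/2} y^{\sigma(K)/2}\, x^{-\sigma(K)/2 - \iota_+(T)}\, y^{\sigma(K)/2 + \iota_+(T)}.
$$
So every term has the form $x^{\sigma(K)/2}y^{\sigma(K)/2} x^{i/2}y^{-i/2}$ with $i = -\sigma(K) - 2\iota_+(T)$.

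It remains to see that the number of trees with a given value of $i$ is $|a_i|$. For this I take the first identity of Proposition \ref{prop:activity_alex}, with the $\varepsilon$ terms set to zero:
$$
\Delta_K(t) = \sum_T (t^{1/2})^{\iota_+(T)} (-t^{1/2})^{-\overline{\iota}_+(T)}.
$$
Up to sign, the $T$-summand is $t^{(\iota_+ - \overline{\iota}_+)/2} = t^{(2\iota_+ + \sigma)/2} = t^{-i/2}$. By Lemma \ref{lem:unsigned_delt} there is no cancellation, so the number of trees with a given $i$ equals $|a_{-i}|$. Since $\Delta_K(t) = \Delta_K(t^{-1})$, we have $|a_{-i}| = |a_i|$, which gives the stated formula. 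The only step needing care is this sign and index bookkeeping between $\iota_+$ and the exponent $i$; it is resolved by the symmetry of $\Delta_K$.
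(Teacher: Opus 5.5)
Your proof is correct and takes essentially the paper's route. The paper reads off $P_K(x,1,z,w)=x^{\sigma(K)/2}|\Delta|_K(x)$ from the first specialization in Proposition \ref{prop:three_alex} together with Lemma \ref{lem:unsigned_delt}, then homogenizes using Proposition \ref{prop:hom}; you unpack those same ingredients (the tree sum of Proposition \ref{prop:activity_alex}, the constraint $\iota_+(T)+\overline{\iota}_+(T)=-\sigma(K)$ from Lemma \ref{lem:murasugi}, no cancellation, and the symmetry of $\Delta_K$), and your bookkeeping $i=-\sigma(K)-2\iota_+(T)$, giving $|a_{-i}|=|a_i|$ trees for each $i$, checks out.
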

	
	\begin{proof}
		The diagram $D$ has no negative crossings, so $P_K$ is independent of $z$ and $w$. Using this, the first identity in Proposition \ref{prop:three_alex}, and Lemma \ref{lem:unsigned_delt}, we have
		$$
		P_K(x, 1, z, w) = x^{\sigma(K)/2}|\Delta|_K(x) = x^{\sigma(K)/2}\sum_{i = -n}^n |a_i| x^{i/2}.
		$$
		Since $P_K$ is homogeneous with degree $\sigma(K)$, we conclude that
		$$
		P_K(x,y,z,w) = x^{\sigma(K)/2}y^{\sigma(K)/2}\sum_{i = -n}^n |a_i| x^{i/2}y^{-i/2}.
		$$
	\end{proof}
	
	As with Corollary \ref{cor:alex_special}, a similar identity using $z$ and $w$ can be obtained if $K$ admits an alternating diagram with all negative crossings. Next, we show that $P$ is multiplicative under connect sum.
	
	\begin{prop}
		\label{prop:sum}
		For any two alternating links $K$ and $K'$,
		$$
		P_{K \# K'} = P_K P_{K'},
		$$
		where $K \# K'$ denotes the connect sum along any component of $K$ and any component of $K'$.
	\end{prop}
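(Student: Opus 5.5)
Take alternating diagrams $D$ of $K$ and $D'$ of $K'$, and form an alternating diagram of $K \# K'$ by the usual planar connect sum. Work at the level of Tait digraphs: the Tait graph of the sum should be the one-point join (wedge) of $G$ and $G'$ at a vertex. The plan is to show this, then show the spanning trees and their four activity counts split over the two pieces, and finally invoke Theorem \ref{thm:invariant} for the choice of root and basepoint.

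\textbf{The diagram.} Cut $D$ along an arc of the chosen component lying on the boundary of some shaded region $R$, and cut $D'$ along an arc lying on the boundary of a shaded region $R'$. Place $D'$ inside an unshaded region of $D$ adjacent to that arc, and join the strands so that $R$ and $R'$ merge into a single shaded region. Orientations are matched as the connect sum requires. The result $D \# D'$ is alternating, since every crossing is still type I. Crossing signs are unchanged, because each crossing keeps its local orientation.

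The Tait digraph of $D \# D'$ is then $G \vee G'$, obtained by identifying $v_R$ with $v_{R'}$, with all edge directions and signs inherited. One has to check that the planar embedding is the natural one: $G'$ sits inside a face of $G$ incident to $v_R$. If alternation forces the opposite shading to merge instead, replace $R'$ by a shaded region of $D'$ adjacent to the cut arc; one of the two choices always merges shaded regions.

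\textbf{Splitting the trees.} Spanning trees of $G \vee G'$ are exactly the unions $T \cup T'$ of spanning trees of the two factors. Choose the root $r$ to be the wedge vertex, and choose the basepoint $q$ in a face of $G \vee G'$ adjacent to $r$ that is the merger of an outer face of $G$ and a face of $G'$ incident to $r$.

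\emph{Internal activity.} For an edge $e \in T$, the fundamental cut in $T \cup T'$ equals the fundamental cut in $T$, since the $G'$ part lies on the root side. Its orientation relative to $r$ is therefore the same, so $\iota_+$ and $\overline{\iota}_+$ add over the two factors.

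\emph{External activity.} For an edge $e \notin T$, the fundamental cycle lies entirely in $G$. Whether it runs counterclockwise around $q$ depends only on which side of the cycle $q$ lies. Since $G'$ is drawn in a face of $G$, and $q$ sits in the merged face, $q$ lies on the same side of every cycle of $G$ as the corresponding basepoint $q_G$ of $G$. This needs $q_G$ chosen in the face of $G$ containing $G'$, which is adjacent to $r$. Symmetrically, a basepoint $q_{G'}$ in the face of $G'$ containing $G$ works for cycles of $G'$. Hence $\varepsilon_-$ and $\overline{\varepsilon}_-$ also add, and the weight of $T \cup T'$ is the product of the weights of $T$ and $T'$.

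Summing over all pairs $(T,T')$ gives $P_{(G\vee G',r,q)} = P_{(G,r,q_G)} P_{(G',r,q_{G'})}$. Theorem \ref{thm:invariant} then removes the dependence on the choices of root, basepoint and diagram; it also implies the result is independent of which components are summed, whenever the connect sum itself is well defined.

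\textbf{Main obstacle.} The step I expect to need the most care is the face bookkeeping: checking that a single basepoint $q$ in the merged face is simultaneously adjacent to $r$ and lies on the correct side of every cycle of both $G$ and $G'$. I would handle this by drawing $G'$ inside the face of $G$ at $r$ that contains $q_G$. Then every cycle of $G$ has $G'$ and $q$ on the same side, and vice versa. The second point of care is the local picture confirming that the shaded regions, rather than the unshaded ones, merge under the connect sum.
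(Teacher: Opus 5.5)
Your proposal is correct and is essentially the paper's proof. Like the paper, you form the wedge $G \cup_{v \sim v'} G'$, root it at the wedge vertex with the basepoint in the merged (unbounded) face, split spanning trees and all four activity counts over the two factors, and only then invoke Theorem~\ref{thm:invariant}; this is not circular, since the paper's proof of diagram independence relies on exactly the graph-level identity you establish first.

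One small repair is needed. Each arc borders only one shaded region, so your fallback of ``replacing $R'$'' should instead be to cut $D'$ along the next arc of the same component, where the shaded side switches. Alternatively, note via Lemma~\ref{lem:alt_crossing_signs} that any alternating connect-sum diagram must merge like-colored regions, so its Tait digraph is automatically a wedge.
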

	
	\begin{proof}
		Let $D \subset \R^2$ be an alternating diagram of $K$, $D'$ an alternating diagram of $K'$, and $D \# D'$ the diagrammatic connect sum of $D$ and $D'$, representing $K \# K'$. Let $G$ be the Tait digraph of $D$, and $G'$ the Tait digraph of $D'$; then the Tait digraph of $K \# K'$ is the graph $G \cup_{v \sim v'} G'$ formed by identifying a vertex $v \in V(G)$ with a vertex $v' \in V(G')$.
		
		Choose $v = v'$ as a root vertex for $G \cup_{v \sim v'} G'$ and let $q$ be a basepoint in the unbounded face of $G \cup_{v \sim v'} G'$, which is necessarily adjacent to $v$ in $\R^2$. Let $\mathcal{T}_{G \cup_{v \sim v'} G'}$, $\mathcal{T}_G$ and $\mathcal{T}_{G'}$ denote the spanning tree sets of each graph. Then there is a bijection
		$$
			\mathcal{T}_{G \cup_{v \sim v'} G'} \to \mathcal{T}_G \times \mathcal{T}_{G'}
		$$
		given by taking the restrictions of a tree $T \in \mathcal{T}_{G \cup_{v \sim v'} G'}$ to $E(G)$ and $E(G')$. For any tree $T \in \mathcal{T}_{G \cup_{v \sim v'} G'}$, one can check that the weight of $T$ in $P_{D \# D'}$ is the product of the weights of pair of trees in the image of $T$ under the above bijection. The result follows.
	\end{proof}
	
	Finally, for any alternating link $K$, let $m(K)$ be the mirror of $K$---that is, $K$ with all crossings switched---and let $-K$ be $K$ with the orientations of all components reversed. We consider how $P_K$ relates to $P_{m(K)}$ and $P_{-K}$.
	
	\begin{prop}
		\label{prop:companions}
		With notation as above,
		\begin{enumerate}[label=(\roman*)]
			\item $P_{-K}(x,y,z,w) = P_K(y,x,w,z)$.
			\item $P_{m(K)}(x,y,z,w) = P_K(z^{-1},w^{-1},x^{-1},y^{-1})$.
		\end{enumerate}
	\end{prop}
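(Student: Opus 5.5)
We work diagrammatically. Fix an alternating diagram $D$ of $K$ with Tait digraph $G$, root $r$ and adjacent basepoint $q$, and compute $P_{-K}$ and $P_{m(K)}$ from suitable diagrams related to $D$. Theorem \ref{thm:invariant} lets us choose any alternating diagrams and any adjacent root/basepoint pairs. For each part we will exhibit a bijection on spanning trees that transforms the four statistics $(\iota_+, \overline{\iota}_+, \varepsilon_-, \overline{\varepsilon}_-)$ in the required way.

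For (i), reverse all orientations of $D$ to obtain a diagram $-D$ of $-K$. Crossing signs are unchanged, since reversing both strands preserves the sign, so $E_+$ and $E_-$ are unchanged. The shading is unchanged, and so is the Tait graph, but every edge of the Tait digraph, and hence every dual edge, is reversed. Keep the same $r$, $q$ and the same tree $T$. An edge of $T$ pointing away from $r$ in its fundamental cut now points toward $r$, so $\iota_+$ and $\overline{\iota}_+$ swap. Likewise, a non-tree edge agreeing with the counterclockwise orientation of its fundamental cycle now disagrees, so $\varepsilon_-$ and $\overline{\varepsilon}_-$ swap. The weight $x^{-\iota_+}y^{-\overline{\iota}_+}z^{\varepsilon_-}w^{\overline{\varepsilon}_-}$ therefore becomes the old weight with $x\leftrightarrow y$ and $z\leftrightarrow w$, which gives (i).

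For (ii), mirror $D$ by switching all crossings. This negates every crossing sign, and the diagram stays alternating, but the type I shading for $m(D)$ is now the opposite shading. So the Tait graph of $m(D)$ is $G^*$, with $E_+(m(D))=E_-(D)$ under the dual edge identification, and vice versa. The first key step is to check the edge directions. At a crossing, the overstrand of $m(D)$ was the understrand of $D$, and the Tait digraph edge of $m(D)$ follows it. We need to compare this with the convention of Figure \ref{fig:dual_o} for $e^*$. From Figure \ref{fig:sign_comparison}, I expect the two to agree, possibly up to a global reversal. A global reversal would be absorbed using part (i), and I will track that carefully.

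Given this, take root $q$ and basepoint $r$ for $G^*$, which are still adjacent, and send $T\mapsto T^*$. By Lemma \ref{lem:int_ext}, $\varepsilon_-^D(T)$ counts negative non-tree edges of $G$ whose duals are internally semi-active in $T^*$. These duals are the positive edges of $m(D)$, so $\varepsilon_-^D(T)=\iota_+^{m(D)}(T^*)$ and $\overline{\varepsilon}_-^D=\overline{\iota}_+^{m(D)}$. Part (ii) of Lemma \ref{lem:int_ext} gives $\iota_+^D(T)=\overline{\varepsilon}_-^{m(D)}(T^*)$ and $\overline{\iota}_+^D=\varepsilon_-^{m(D)}$.

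The weight of $T^*$ in $P_{m(D)}$ is then
$$x^{-\varepsilon_-^D}y^{-\overline{\varepsilon}_-^D}z^{\overline{\iota}_+^D}w^{\iota_+^D}.$$
Comparing with the weight of $T$ in $P_K$ evaluated at $(z^{-1},w^{-1},x^{-1},y^{-1})$, namely $z^{\iota_+}w^{\overline{\iota}_+}x^{-\varepsilon_-}y^{-\overline{\varepsilon}_-}$, the $z$ and $w$ exponents come out swapped. This indicates that the orientation check will in fact produce a global reversal (or mirroring in the plane) relative to the dual convention. Applying part (i) then fixes the mismatch, or reflecting the plane reverses the counterclockwise convention. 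The main obstacle is exactly this sign and orientation bookkeeping: determining whether the $m(D)$ Tait digraph equals $G^*$, its reverse, or its planar reflection, and confirming that the combination yields precisely $(z^{-1},w^{-1},x^{-1},y^{-1})$. I would settle it by a careful local check at one positive and one negative crossing using Figure \ref{fig:sign_comparison}. As a sanity check, I would test the result on the trefoil and on $P_{4_1}$ from the introduction.
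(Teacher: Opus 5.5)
Part (i) is correct and is the paper's argument. The gap is in part (ii), at exactly the step you deferred, and the fix you lean toward does not work. Suppose the Tait digraph $G'$ of $m(D)$ were $G^*$ with \emph{every} edge reversed. Then both of your correspondences would flip, and the weight of $T^*$ would become $x^{-\overline{\varepsilon}_-}y^{-\varepsilon_-}z^{\iota_+}w^{\overline{\iota}_+}$. That gives $P_K(z^{-1},w^{-1},y^{-1},x^{-1})$, so the $z\leftrightarrow w$ mismatch is merely traded for an $x\leftrightarrow y$ mismatch. Part (i) also swaps both pairs at once, so no combination of a uniform reversal with part (i) can repair a mismatch in a single pair. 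Your computation being off in exactly one pair is the signal that the discrepancy between $G'$ and $G^*$ depends on the sign of the edge.

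The local check in Figure \ref{fig:sign_comparison} confirms this. At a positive crossing, $e$ points up and $e^*$ points right. After switching the crossing, the overstrand runs from lower right to upper left, so the new Tait edge points left, i.e.\ $e'=-e^*$. At a negative crossing, $e^*$ points up and the new overstrand runs from lower left to upper right, so $e'=e^*$. Thus $G'$ is $G^*$ with precisely its negative edges (the duals of $E_+$) reversed.

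With this, your step $\varepsilon_-^D(T)=\iota_+^{m(D)}(T^*)$ via Lemma \ref{lem:int_ext}(i) survives. In the step via Lemma \ref{lem:int_ext}(ii), however, the reversal $e'=-e^*$ turns ``$e^*$ externally semi-passive'' into ``$e'$ externally semi-active''. Hence $\iota_+^D(T)=\varepsilon_-^{m(D)}(T^*)$ and $\overline{\iota}_+^D(T)=\overline{\varepsilon}_-^{m(D)}(T^*)$, and the weight of $T^*$ is $x^{-\varepsilon_-}y^{-\overline{\varepsilon}_-}z^{\iota_+}w^{\overline{\iota}_+}$, exactly as required.

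Your ``planar reflection'' option yields the same tree statistics, since by Lemma \ref{lem:graph_flip} reflecting is equivalent for $P$ to reversing negative edges. But $G'$ is not literally a reflection of $G^*$, so that route still needs this sign-dependent check.

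Finally, do not invoke Theorem \ref{thm:invariant}. The paper uses this proposition, in the diagram-level form $P_{(G',q,r)}(x,y,z,w)=P_{(G,r,q)}(z^{-1},w^{-1},x^{-1},y^{-1})$, to prove Theorem \ref{thm:root_indep} and Lemma \ref{lem:flype_invar}. Your bijection already works at that level, so state it that way.
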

	
	\begin{proof}
		First, reversing the orientation of $K$ switches the direction of every edge in its Tait digraph and preserves the signs of crossings. Thus, for a fixed root and basepoint, any semi-active edges with respect to a given spanning tree become semi-passive and vice versa. The identity is then clear from Definition \ref{def:poly}.
		
		For the second identity we fix an alternating diagram $D$ of $K$, with Tait digraph $G = (V,E)$, and choose a root $r$ and adjacent basepoint $q$. Let $m(D)$ be the mirror of $D$, and let $G' = (V', E')$ be its Tait digraph. Additionally, let $E_+, E_- \subset E$ denote the positive and negative edges of $E$, and define $E'_+$ and $E'_-$ analogously for $E'$. Since switching a crossing of $D$ switches its type with regard to a checkerboard shading, the graph $G'$ coincides with the planar dual $G^*$ of $G$ as an {\em undirected} graph. We thus have an identification of edges $E \leftrightarrow E'$, and we may fix $q$ as a root vertex for $G'$ and $r$ as a basepoint. Since mirroring a crossing changes its sign, the bijection $E \leftrightarrow E'$ restricts to bijections $E_+ \leftrightarrow E'_-$ and $E_- \leftrightarrow E'_+$.
		
		We now consider edge orientations. For an edge $e \in E$ with corresponding edge $e' \in E'$, we write $e' = e^*$ if the direction of $e'$ agrees with the direction of the edge $e^*$ which is dual to $e$ in $G^*$---otherwise, we write $e' = -e^*$. It is straightforward to check that if $e$ passes through a positive crossing of $D$ then $e' = -e^*$, while if $e$ passes through a negative crossing of $D$ then $e' = e^*$. It follows that $G'$ is equal to the dual digraph $G^*$ of $G$, with the directions of all negative edges of $G'$ (dual to positive edges of $G$) reversed.
		
		Fix a spanning tree $T$ of $G$ with dual tree $T^*$ of $G^*$, and let $T'$ be the tree of $G'$ with the same edges as $T^*$. For an edge $e \in T \cap E_+$, let $e^* \in E(G^*)$ be its dual and let $e' \in E'_- \setminus T'$ be the corresponding edge of $G'$. Then by Lemma \ref{lem:int_ext} (ii), $e$ is internally semi-active in $T$ if and only if $e^* \in E(G^*)$ is externally semi-passive with respect to $T^*$, which occurs if and only if $e'$ is externally semi-active since $e' = -e^*$. Likewise, we can use Lemma \ref{lem:int_ext} (i) to conclude that an edge $e \in E_- \setminus T$ is externally semi-active if and only if the relevant edge $e' \in T' \cap E_+$ is internally semi-active. The result then follows from Definition \ref{def:poly}.
	\end{proof}
	
	Proposition \ref{prop:companions} yields:
	
	\begin{cor}
		Let $K$ be an alternating link. If $K$ is invertible, i.e.~if $K = -K$, then
			$$
		P_K(x,y,z,w) = P_K(y,x,w,z).
		$$
		If $K$ is amphichiral, i.e.~if $K = m(K)$, then
		$$
		P_K(x,y,z,w) = P_K(z^{-1},w^{-1},x^{-1},y^{-1}).
		$$
	\end{cor}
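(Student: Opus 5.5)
The corollary follows directly from Proposition \ref{prop:companions} together with Theorem \ref{thm:invariant}, which says that $P_K$ depends only on the isotopy class of the oriented alternating link $K$. The plan is simply to compute $P$ of the same link in two ways.

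For the invertible case, suppose $K = -K$ as oriented links. Then $-K$ is alternating, being the same link as $K$. By Theorem \ref{thm:invariant}, $P_{-K} = P_K$ as Laurent polynomials. Proposition \ref{prop:companions}(i) gives $P_{-K}(x,y,z,w) = P_K(y,x,w,z)$. Combining the two yields $P_K(x,y,z,w) = P_K(y,x,w,z)$.

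For the amphichiral case, suppose $K = m(K)$. Note that $m(K)$ is alternating, since mirroring an alternating diagram gives an alternating diagram. Theorem \ref{thm:invariant} therefore gives $P_{m(K)} = P_K$. Proposition \ref{prop:companions}(ii) gives $P_{m(K)}(x,y,z,w) = P_K(z^{-1},w^{-1},x^{-1},y^{-1})$, and the desired identity follows.

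There is no real obstacle here. The only point to be careful about is that the corollary uses the full strength of Theorem \ref{thm:invariant}. The two sides of each identity are computed from different diagrams: a diagram $D$ of $K$ and, respectively, the reversed diagram $-D$ or the mirror diagram $m(D)$. These diagrams are related only by an isotopy of links, not necessarily by any diagram move preserving the Tait digraph, so diagram-independence of $P$ (proved later via the flyping theorem) is exactly what is needed.

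The notions of invertibility and amphichirality should be read as equality of oriented links up to isotopy. If amphichirality is taken up to orientation reversal, one composes (i) and (ii) instead, obtaining $P_K(x,y,z,w) = P_K(w^{-1},z^{-1},y^{-1},x^{-1})$ in that variant.
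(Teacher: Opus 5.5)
Your proposal is correct and matches the paper, which derives the corollary directly from Proposition~\ref{prop:companions} together with the invariance of $P_K$ (Theorem~\ref{thm:invariant}). You are right that diagram independence is genuinely needed here, since $D$, $-D$ and $m(D)$ are in general related only by an isotopy of the link.
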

	
	\subsection{Dimer and determinant perspectives}
	\label{sec:dimer_det}
	
	Like the Alexander polynomial, our polynomial $P_K$ can be expressed as a weighted count of dimers. Let $G = (V, E_+ \sqcup E_-)$ be the Tait digraph of an alternating link diagram $D$, and $\mathcal{H} = (\mathcal{V}_1 \sqcup \mathcal{V}_2, \mathcal{E})$ the double overlay of $G$ as in Section \ref{sec:sym_alex}. Then we define a weight function
	$$
	\omega_P : \mathcal{E} \to \Z[x^{-1}, y^{-1}, z, w]
	$$
	by 
	$$
	\omega_P(f) = \begin{cases}
		x^{-1} & \text{$f$ lies at the head of a positive edge of $G$} \\
		y^{-1} & \text{$f$ lies at the tail of a positive edge of $G$} \\
		z & \text{$f$ lies to the right of a negative edge of $G$ (as part of an edge of $G^*$)} \\
		w & \text{$f$ lies to the left of a negative edge of $G$} \\
		1 & \text{otherwise}.
	\end{cases}
	$$
	
	Applying the spanning tree/dimer bijection as in Section \ref{sec:tree_dim_one}, we find:
	\begin{prop}
		\label{prop:p_dim}
		Let $G$ and $\mathcal{H}$ be as above, and $\mathcal{H}^{red}$ a reduced double overlay of $G$ as in Section \ref{sec:sym_alex}. Let $\mathcal{M}$ be the set of dimers of $\mathcal{H}^{red}$. Then
		$$
		P_G(x,y,z,w) = \sum_{M \in \mathcal{M}} \prod_{f \in M} \omega_P(f).
		$$
	\end{prop}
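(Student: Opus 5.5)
The plan is to mirror the proof of Lemma \ref{lem:init_alex}, using the spanning tree/dimer bijection $\Phi$ of Theorem \ref{thm:kpw}. Since $\Phi : \mathcal{T} \to \mathcal{M}$ is a bijection, it suffices to show that for each spanning tree $T$ of $G$, with dual tree $T^*$ of $G^*$, the product $\prod_{f \in \Phi(T)} \omega_P(f)$ equals the weight $x^{-\iota_+(T)} y^{-\overline{\iota}_+(T)} z^{\varepsilon_-(T)} w^{\overline{\varepsilon}_-(T)}$ of $T$ from Definition \ref{def:poly}. Recall that $\Phi(T)$ consists of the leaf half of every edge of $T$ and of every edge of $T^*$. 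The product therefore factors over edges $e \in E(G)$: if $e \in T$ it contributes the $\omega_P$-weight of the leaf half of $e$, and if $e \notin T$ it contributes the $\omega_P$-weight of the leaf half of $e^*$ in $T^*$.

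I would then check this edge by edge, in four cases.

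\textbf{Case 1: $e \in T$ and $e \in E_-$, or $e \notin T$ and $e \in E_+$.} The relevant half-edge is a half of $e$ through a negative crossing, or a half of the dual of a positive edge. By definition of $\omega_P$ such halves have weight $1$, as they should.

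\textbf{Case 2: $e \in T \cap E_+$.} The leaf half of $e$ is at the head of $e$ exactly when $e$ points away from $r$, i.e.\ when $e$ is internally semi-active (Definition \ref{def:int}). So the contribution is $x^{-1}$ for semi-active edges and $y^{-1}$ for semi-passive ones. This produces the factor $x^{-\iota_+(T)}y^{-\overline{\iota}_+(T)}$.

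\textbf{Case 3: $e \in E_- \setminus T$.} By Lemma \ref{lem:int_ext}(i), $e$ is externally semi-active exactly when $e^*$ is internally semi-active in $T^*$, i.e.\ points away from $q$. That happens exactly when the leaf half of $e^*$ is its head half. It remains to show that, with the dual orientation convention of Figure \ref{fig:dual_o}, the head half of $e^*$ lies to the right of $e$ and the tail half to the left. Granting this, the contribution is $z$ when $e$ is externally semi-active and $w$ when it is semi-passive, giving $z^{\varepsilon_-(T)}w^{\overline{\varepsilon}_-(T)}$.

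The only step needing care is this left/right convention in Case 3. Figure \ref{fig:dual_o} shows $e$ pointing right and $e^*$ pointing downward, so the head of $e^*$ is on the right side of $e$ when facing along $e$. That matches the definition of $\omega_P$, which assigns $z$ to the right half. I would also cross-check against Lemma \ref{lem:weight_comp}(ii) and Definition \ref{def:wt_fn}: there the weight $t^{-1/2}$ sits on the head half $f'_h$ and corresponds to external semi-activity, consistent with $z$ playing the role of $t^{-1/2}$ in the third specialization of Proposition \ref{prop:three_alex}. If the figure convention turned out reversed, I would re-derive the assignment from this correspondence with Definition \ref{def:wt_fn}.

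Multiplying the four cases over all edges gives the weight of $T$. Summing over $\mathcal{T}$ and applying the bijection $\Phi$ then yields the stated identity.
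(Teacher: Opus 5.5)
Your proposal is correct and takes the same route as the paper: apply the bijection $\Phi$ of Theorem \ref{thm:kpw} and match leaf-half weights edge by edge, exactly as in the proof of Lemma \ref{lem:init_alex}. Your resolution of the left/right convention in Case 3 is also right: the head of $e^*$ lies to the right of $e$, so $z$ records external semi-activity via Lemma \ref{lem:int_ext}(i), and this agrees with Figure \ref{fig:dimer_weights} and Lemma \ref{lem:weight_comp}(ii).
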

	The weights of edges around positive and negative crossings of a diagram $D$ are shown in Figure \ref{fig:dimer_weights}---of course, the weight function $\omega_P$ is a refinement of the weight function $\omega$ of Section \ref{sec:sym_alex}.
	
	\begin{figure}
		\centering
		
		\begin{subfigure}[t]{0.4\textwidth}
			\centering
			\begin{tikzpicture}
				\draw[-{Stealth}, ultra thick, red!30] (-1,-1) -- (1,1);
				\draw[ultra thick, red!30] (1,-1) -- (0.15,-0.15);
				\draw[{Stealth}-, ultra thick, red!30] (-1,1) -- (-0.15,0.15);
				
				\draw[thick] (0,0) -- node[above] {$1$} (1,0);
				\draw[thick] (0,0) -- node[above] {$1$} (-1,0);
				\draw[thick] (0,0) -- (0,1);
				\draw[thick] (0,0) -- (0,-1);
				\draw[fill=black] (-1,0) circle (1.5pt);
				\draw[fill=black] (1,0) circle (1.5pt);
				\draw[fill=black] (0,1) circle (1.5pt);
				\draw[fill=black] (0,-1) circle (1.5pt);
				\draw[fill=white] (0,0) circle (1.5pt);
				\node[anchor=west] at (1.1, 0.75) {$x^{-1}$};
				\node[anchor=west] at (1.1, -0.75) {$y^{-1}$};
				\draw[<-, thin] (0.1, 0.75) -- (1.1, 0.75);
				\draw[<-, thin] (0.1, -0.75) -- (1.1, -0.75);
			\end{tikzpicture}
			\caption{Positive crossings}
		\end{subfigure}
		\begin{subfigure}[t]{0.4\textwidth}
			\centering
			\begin{tikzpicture}
				\draw[-{Stealth}, ultra thick, red!30] (1,-1) -- (-1,1);
				\draw[ultra thick, red!30] (-1,-1) -- (-0.15,-0.15);
				\draw[{Stealth}-, ultra thick, red!30] (1,1) -- (0.15,0.15);
				
				\draw[thick] (0,0) -- node[above] {$1$} (1,0);
				\draw[thick] (0,0) -- node[above] {$1$} (-1,0);
				\draw[thick] (0,0) -- (0,1);
				\draw[thick] (0,0) -- (0,-1);
				\draw[fill=black] (-1,0) circle (1.5pt);
				\draw[fill=black] (1,0) circle (1.5pt);
				\draw[fill=black] (0,1) circle (1.5pt);
				\draw[fill=black] (0,-1) circle (1.5pt);
				\draw[fill=white] (0,0) circle (1.5pt);
				\node[anchor=west] at (1.1, 0.75) {$z$};
				\node[anchor=west] at (1.1, -0.75) {$w$};
				\draw[<-, thin] (0.1, 0.75) -- (1.1, 0.75);
				\draw[<-, thin] (0.1, -0.75) -- (1.1, -0.75);
			\end{tikzpicture}
			\caption{Negative crossings}
		\end{subfigure}
		
		\caption{Computing $P$ using dimers}
		\label{fig:dimer_weights}
	\end{figure}
	
	Recall from Section \ref{sec:sym_alex} that the set $\mathcal{V}_2$ may be identified either with crossings of $D$ or with edges of $G$, and that the set $\mathcal{V}_1$ can be thought of either as regions of $D$ or as vertices and faces of $G$. Suppose $D$ has $k$ crossings---then $|\mathcal{V}_2| = k$, and an Euler characteristic argument shows $|\mathcal{V}_1| = k + 2$. Fix an ordering $\{c_1, \dots, c_k\}$ of the vertices in $\mathcal{V}_2$, let $\{v_1, \dots, v_{k + 2}\}$ be the vertices in $\mathcal{V}_1$, and define a $k$-by-$(k + 2)$ matrix
	$$
	\tilde{A} = \{\tilde{a}_{ij}\} \in M_{k, k + 2}(\Z[x^{-1}, y^{-1}, z, w])
	$$
	by
	\begin{equation}
		\label{eq:a_tilde}
		\tilde{a}_{ij} = \sum_\text{$f \in \mathcal{E}$ joining $c_i$ and $v_j$} \omega_P(f).
	\end{equation}
	Then the matrix $\tilde{A}$ encodes the weighted adjacency information of $\mathcal{H}$. Additionally, if we fix a pair of adjacent vertices $r, q \in \mathcal{V}_1$ (meaning the relevant regions of $D$ are adjacent), then we can represent $\mathcal{H}_{(r,q)}^{red}$ by we deleting the columns of $\tilde{A}$ corresponding to $r$ and $q$. We call the resulting matrix $\tilde{A}^{red} = \tilde{A}^{red}_{(r,q)}$. It is not difficult to show, using Proposition \ref{prop:p_dim}, that
	\begin{equation}
		\label{eq:perm}
		P_{(G,r,q)} = \text{perm}(\tilde{A}^{red}_{(r,q)}),
	\end{equation}
	 where perm is the {\em permanent} or unsigned determinant.
	
	Because $\mathcal{H}$ is planar, it is possible to improve on (\ref{eq:perm}) by expressing $P_{(G,r,q)}$ as a genuine determinant. Specifically, we can modify the matrix $\tilde{A}^{red}_{(r,q)}$ to obtain a matrix $A_{(r,q)}^{red}$ such that
	\begin{equation}
		\label{eq:p_det}
		\det(A_{(r,q)}^{red}) = \pm \text{perm}(\tilde{A}^{red}_{(r,q)}) = \pm P_{(G,r,q)}.
	\end{equation}
	The technique for building $A^{red}$ is due to Kasteleyn \cite{kas67}, and we refer the reader to \cite{codaru14} for more details that fit our context. In our case, Kasteleyn's theorem says there exists a sign function
	$$
	\eta : \mathcal{E} \to \{-1,1\}
	$$
	that makes the following hold. We define a $k$-by-$(k + 2)$ matrix $A =  \{a_{ij}\}$, with rows indexed by $\mathcal{V}_2$ and columns indexed by $\mathcal{V}_1$ as above, by the formula
	\begin{equation}
		\label{eq:k_mat}
		a_{ij} = \sum_\text{$f \in \mathcal{E}$ joining $c_i$ and $v_j$} \eta(f)\omega_P(f)
	\end{equation}
	We define $A_{(r,q)}^{red}$ by removing the $r$ and $q$ columns of $A$, and by our choice of $\eta$ $A_{(r,q)}^{red}$ satisfies (\ref{eq:p_det}).
	
	Building on Kasteleyn's work, Kauffman \cite{kauf83} identified a sign function $\eta$ which makes (\ref{eq:p_det}) true for the double overlay of any link diagram, with any choice of weights and adjacent $r$ and $q$. Kauffman's sign function can be defined locally around each crossing, and is shown in Figure \ref{fig:kauf_weights}.\footnote{It's no coincidence that the signs in Figure \ref{fig:kauf_weights} match the signs of the weights in Figure \ref{fig:edge_weights} (and the signs of the weights in Figure \ref{fig:alt_weights} if we reverse the orientation of the link). The Alexander polynomial was originally defined as a determinant, and Kauffman applied to Kasteleyn's theorem to rewrite it as a dimer count.} To summarize, we have the following definition and proposition.
	
		\begin{figure}
		\centering
		
		\begin{subfigure}[t]{0.4\textwidth}
			\centering
			\begin{tikzpicture}
				\draw[-{Stealth}, ultra thick, red!30] (-1,-1) -- (1,1);
				\draw[ultra thick, red!30] (1,-1) -- (0.15,-0.15);
				\draw[{Stealth}-, ultra thick, red!30] (-1,1) -- (-0.15,0.15);
				
				\draw[thick] (0,0) -- node[above] {$1$} (1,0);
				\draw[thick] (0,0) -- node[above] {$1$} (-1,0);
				\draw[thick] (0,0) -- (0,1);
				\draw[thick] (0,0) -- (0,-1);
				\draw[fill=black] (-1,0) circle (1.5pt);
				\draw[fill=black] (1,0) circle (1.5pt);
				\draw[fill=black] (0,1) circle (1.5pt);
				\draw[fill=black] (0,-1) circle (1.5pt);
				\draw[fill=white] (0,0) circle (1.5pt);
				\node[anchor=west] at (1.1, 0.75) {$1$};
				\node[anchor=west] at (1.1, -0.75) {$-1$};
				\draw[<-, thin] (0.1, 0.75) -- (1.1, 0.75);
				\draw[<-, thin] (0.1, -0.75) -- (1.1, -0.75);
			\end{tikzpicture}
			\caption{Positive crossings}
		\end{subfigure}
		\begin{subfigure}[t]{0.4\textwidth}
			\centering
			\begin{tikzpicture}
				\draw[-{Stealth}, ultra thick, red!30] (1,-1) -- (-1,1);
				\draw[ultra thick, red!30] (-1,-1) -- (-0.15,-0.15);
				\draw[{Stealth}-, ultra thick, red!30] (1,1) -- (0.15,0.15);
				
				\draw[thick] (0,0) -- node[above] {$1$} (1,0);
				\draw[thick] (0,0) -- node[above] {$1$} (-1,0);
				\draw[thick] (0,0) -- (0,1);
				\draw[thick] (0,0) -- (0,-1);
				\draw[fill=black] (-1,0) circle (1.5pt);
				\draw[fill=black] (1,0) circle (1.5pt);
				\draw[fill=black] (0,1) circle (1.5pt);
				\draw[fill=black] (0,-1) circle (1.5pt);
				\draw[fill=white] (0,0) circle (1.5pt);
				\node[anchor=west] at (1.1, 0.75) {$1$};
				\node[anchor=west] at (1.1, -0.75) {$-1$};
				\draw[<-, thin] (0.1, 0.75) -- (1.1, 0.75);
				\draw[<-, thin] (0.1, -0.75) -- (1.1, -0.75);
			\end{tikzpicture}
			\caption{Negative crossings}
		\end{subfigure}
		
		\caption{Kauffman's sign function}
		\label{fig:kauf_weights}
	\end{figure}
	
	\begin{defn}
		\label{def:kast}
		Let $D$ be an alternating link diagram with double overlay $\mathcal{H}$ and adjacent vertices $r, q \in \mathcal{V}_1$. Let $\eta$ be Kauffman's sign function on the edges of $\mathcal{H}$, as shown in Figure \ref{fig:kauf_weights}, and let $A$ be the matrix defined by (\ref{eq:k_mat}). Let $A^{red} = A^{red}_{(p, q)}$ be the matrix $A$ with the $r$ and $q$ columns removed. Then we call $A$ an {\em (unreduced) Kasteleyn matrix} for $D$, and $A^{red}$ a {\em reduced Kasteleyn matrix}.
	\end{defn}
	
	\begin{prop}
		\label{prop:p_det}
		Let $D$ be an alternating link diagram with Tait digraph $G$, and let $r$ be a vertex of $G$ and $q$ an adjacent basepoint. Let $A^{red}_{(r,q)}$ be the corresponding reduced Kasteleyn matrix---then
		$$
		P_{(G,r,q)}(x,y,z,w) = |\det(A^{red}_{(r,q)})|.
		$$
	\end{prop}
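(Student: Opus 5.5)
Combine three earlier facts: the dimer expansion of $P_{(G,r,q)}$ in Proposition \ref{prop:p_dim}, the permanent identity (\ref{eq:perm}), and Kasteleyn--Kauffman's theorem that Kauffman's sign function $\eta$ of Figure \ref{fig:kauf_weights} makes the signed matrix $A^{red}_{(r,q)}$ have determinant $\pm$ the permanent of $\tilde{A}^{red}_{(r,q)}$, as in (\ref{eq:p_det}). Since $P_{(G,r,q)}$ has nonnegative integer coefficients, the identity $\det(A^{red}_{(r,q)}) = \pm P_{(G,r,q)}$ gives the statement once $|\cdot|$ is read as ``up to an overall sign,'' as in Remark \ref{rem:indet_alex}.

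The steps are as follows. First, expand $\det(A^{red}_{(r,q)})$ by the Leibniz formula. Each nonzero term is a choice, for every row $c_i \in \mathcal{V}_2$, of a column $v_j \in \mathcal{V}_1 \setminus\{r,q\}$ and an edge $f$ of $\mathcal{H}^{red}$ joining them; these choices together form a dimer $M$. Distributing the sums in (\ref{eq:k_mat}) therefore gives
$$\det(A^{red}_{(r,q)}) = \sum_{M\in\mathcal{M}} \operatorname{sgn}(\pi_M)\prod_{f\in M}\eta(f)\,\omega_P(f),$$
where $\pi_M$ is the bijection rows $\to$ columns induced by $M$. (Multiple edges between the same $c_i$ and $v_j$, which occur at nugatory-type configurations, are handled automatically by this distribution.)

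Second, show that $\epsilon(M) := \operatorname{sgn}(\pi_M)\prod_{f\in M}\eta(f)$ is independent of $M$. This is exactly the content of Kauffman's result: his signs are Kasteleyn-flat for the double overlay of \emph{any} diagram with adjacent $r,q$. The weights $\omega_P$ play no role here, since they are monomials with coefficient $+1$. If I had to verify this directly rather than cite it, I would use the clock theorem \cite[Theorem 2.5]{kauf83}. Any two dimers are related by clock moves, and a clock move transposes two columns, changing $\operatorname{sgn}(\pi_M)$ by $-1$. It also swaps two edges at a single crossing, which by Figure \ref{fig:kauf_weights} changes $\prod\eta$ by $-1$ (one state uses the $-1$ edge and the other does not). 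So $\epsilon(M)$ is invariant under clock moves.

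Third, conclude: $\det(A^{red}_{(r,q)}) = \epsilon\sum_M\prod_{f\in M}\omega_P(f) = \epsilon\, P_{(G,r,q)}$ by Proposition \ref{prop:p_dim}, with $\epsilon = \pm1$. Since $P_{(G,r,q)}$ has positive coefficients, taking absolute value (that is, choosing the sign making the leading coefficients positive) yields the claim.

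The main obstacle is the second step, checking the local sign change under a clock move. In particular, one has to confirm that a clock move always swaps a $\eta=-1$ edge with a $\eta=+1$ edge at the same crossing. Given Figure \ref{fig:kauf_weights}, each crossing has exactly one $-1$ edge among four, so this amounts to checking which pairs of edges a clock move exchanges. I would first try to cite \cite{kauf83,codaru14} for this and fall back on the clock-move case check only if needed.
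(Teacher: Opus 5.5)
Your argument is the paper's own. The paper gives no separate proof; it obtains the proposition by combining Proposition \ref{prop:p_dim} and (\ref{eq:perm}) with Kauffman's theorem that the signs of Figure \ref{fig:kauf_weights} make (\ref{eq:p_det}) hold for any reduced double overlay, and then uses that $P_{(G,r,q)}$ has nonnegative coefficients, exactly as you do. One correction to your optional clock-move fallback: a clock move changes the matched edge at \emph{two} crossings $c_1, c_2$ joined by an edge of the universe, not at a single crossing (this is why it transposes two columns), so the real check is that the quadrilateral face $c_1R_1c_2R_2$ of $\mathcal{H}$ carries exactly one edge with $\eta=-1$, which holds because each crossing's $-1$ edge lies in the quadrant between its two incoming strands and hence appears only at the endpoint where that universe edge is incoming.
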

	
	Finally, we emphasize:
	
	\begin{lemma}
		\label{lem:a_signs}
		With notation as in Definition \ref{def:kast}, the entries of $A$ coincide exactly with the entries of $\tilde{A}$ except that all occurences of $y^{-1}$ and $w$ are negative.
	\end{lemma}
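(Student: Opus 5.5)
The plan is to compare the two matrices entry by entry, using the local description of both $\eta$ and $\omega_P$ at each crossing. Every edge $f \in \mathcal{E}$ of the double overlay joins a crossing $c_i \in \mathcal{V}_2$ to a region vertex $v_j \in \mathcal{V}_1$. So each entry $a_{ij}$ is the sum of the terms $\eta(f)\omega_P(f)$ over the (at most two, counting a region touching a crossing twice) edges $f$ joining $c_i$ and $v_j$. Likewise $\tilde a_{ij}$ is the same sum without the signs. It therefore suffices to prove the following local claim: for every edge $f$, $\eta(f) = -1$ exactly when $\omega_P(f) \in \{y^{-1}, w\}$, and $\eta(f) = 1$ otherwise. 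Summing over the edges joining $c_i$ and $v_j$ then gives the lemma, since no cancellation is asserted or needed. The entries are simply sums of signed monomials, and the statement concerns the signs of those monomials.

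To verify the local claim, I would fix a crossing $c$ and place it as in Figure~\ref{fig:dimer_weights} and Figure~\ref{fig:kauf_weights}, which use identical pictures of a positive and a negative crossing, with the four half-edges of $\mathcal{H}$ at $c$ drawn in the same positions. Reading off the two figures:
\begin{itemize}
\item At a positive crossing, the two horizontal half-edges carry $\omega_P = 1$ and $\eta = 1$. The upper and lower half-edges carry $(\omega_P, \eta) = (x^{-1}, 1)$ and $(y^{-1}, -1)$ respectively.
\item At a negative crossing, the horizontal half-edges again carry $\omega_P = 1$ and $\eta = 1$. The vertical ones carry $(z, 1)$ and $(w, -1)$.
\end{itemize}
Hence in both cases the half-edge with $\eta = -1$ is precisely the one weighted $y^{-1}$ (positive crossing) or $w$ (negative crossing).

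The main point to check carefully, and the only real obstacle, is that the two figures genuinely label the same half-edges. That is, in Figure~\ref{fig:dimer_weights} the weights $x^{-1}, y^{-1}, z, w$ sit at the positions the text assigns them: head versus tail of the positive Tait edge, and right versus left of the negative edge. I would confirm this against Lemma~\ref{lem:weight_comp} and Figure~\ref{fig:sign_comparison}, whose pictures are drawn with the type I shading of Convention~\ref{conv:type}.

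In the positive case, the head half $f_h$ has Alexander weight $t^{1/2}$, placed on the upper arrow. The tail half has weight $-t^{-1/2}$, placed on the lower arrow. Kauffman's sign $-1$ sits on the lower arrow, matching both the sign of $-t^{-1/2}$ and the $y^{-1}$ label.

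In the negative case, Lemma~\ref{lem:weight_comp}(ii) puts the signed weight $-t^{1/2}$ on $f'_t$, the lower arrow. This coincides with the $w$ position and with $\eta = -1$.

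Since $\omega_P$ refines $\omega$, with $x^{-1}, y^{-1}, z, w$ specializing to $t^{1/2}, -t^{-1/2}$ up to sign and $t^{-1/2}, -t^{1/2}$, the sign pattern of $\omega$ coincides with $\eta$ (as the footnote observes). Together with the local claim, this completes the argument.
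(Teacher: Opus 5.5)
Your proposal is correct and takes essentially the same approach as the paper, which simply declares the lemma clear from the definitions; you make this explicit by checking, crossing by crossing in Figures \ref{fig:dimer_weights} and \ref{fig:kauf_weights}, that $\eta(f)=-1$ exactly on the half-edges with $\omega_P(f)\in\{y^{-1},w\}$, and then summing over the edges joining $c_i$ and $v_j$. The closing appeal to $\omega$ and the footnote is redundant, since the local comparison already proves the lemma.
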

	
	\begin{proof}
		This is clear from the definitions.
	\end{proof}
	
	\subsection{Examples}
	\label{sec:examples}
	
	Figure \ref{fig:eight_diag} shows an alternating diagram of the figure eight knot $4_1$. Figure \ref{fig:eight_graphs} shows five copies of its Tait digraph $G$ which enumerate its five spanning trees, with positive and negative edges labelled. The solid edges in each graph are included in the tree, while the dashed ones are left out---by Definition \ref{def:poly}, the edges which contribute to a tree's weight are solid positive edges and dashed negative ones. Additionally, the red circle marks the root vertex, the red asterisk marks the adjacent basepoint, and the weight of each tree is shown below it. From the figure, we see that
	$$
	P_{4_1} = 1 + x^{-1}z + y^{-1}z + x^{-1}w + y^{-1}w.
	$$
	The plane digraph $G$ also has exactly one vertex-face pair which is not adjacent in $\R^2$. If we compute the total weights of the spanning trees with this invalid pair, we get a different answer: $1 + 2x^{-1}z + 2y^{-1}w$. This shows the adjacency hypothesis of Definition \ref{def:poly} is necessary for a well-defined invariant.
	
	\begin{figure}
		\centering
		
		\begin{subfigure}{0.3\textwidth}
			\centering
			\begin{tikzpicture}
				\useasboundingbox (-10.5,-4) rectangle (-5.5,4);
				\path[spath/save=fig8]
					(-8,-1.5) .. controls +(0:1) and +(-90:0.5) ..
					(-6.5,0) .. controls +(90:0.5) and +(0:0.5) ..
					(-7.25,0.75) .. controls +(180:0.5) and +(90:0.5) ..
					(-8.75,-0.25) .. controls +(-90:0.5) and +(90:0.5) .. 
					(-7.25,-1.5) .. controls +(-90:0.5) and +(0:0.5) ..
					(-8,-2.25) .. controls +(180:0.5) and +(-90:0.5) ..
					(-8.75,-1.5) .. controls +(90:0.5) and +(-90:0.5) .. 
					(-7.25,-0.25) .. controls +(90:0.5) and +(0:0.5) ..
					(-8.75,0.75) .. controls +(180:0.5) and +(90:0.5) .. 
					(-9.5,0) .. controls +(-90:0.5) and +(180:1) .. (-8,-1.5);			
				
				\tikzset{
					every fig8 component/.style={draw, ultra thick, red},
					spath/knot={fig8}{15pt}{2,4,6,8},
				}
				
				\draw[-{Stealth}, line width=1.5pt, red] (-6.5,0) -- (-6.5,-0.1);
				\draw[{Stealth}-, line width=1.5pt, red] (-9.5,0.1) -- (-9.5,-0);
				
			\end{tikzpicture}
			\subcaption{The figure eight knot}
			\label{fig:eight_diag}
		\end{subfigure}
		\begin{subfigure}{0.65\textwidth}
			\centering
			\begin{tikzpicture}
				\useasboundingbox (-6,-5) rectangle (6,3);
							
				\coordinate (m1) at (0,0);
				\coordinate (l1) at (-1,2);
				\coordinate (r1) at (1,2);
				\coordinate (a1) at (-1,0);
				
				\draw[fill=black] (m1) circle (1.5pt) node[below, yshift=-2mm] {$x^{-1}z$};
				\draw[fill=black] (l1) circle (1.5pt);
				\draw[fill=black] (r1) circle (1.5pt);
				\draw[thick, red] (m1) circle (5pt);
				\node[thick, red] at (a1) {$*$};
				
				\draw[thick, postaction={decorate}, decoration={
					markings,
					mark=at position 0.05 with {\arrowreversed{Triangle}}
				}] (m1) to[out=135, in=-90] node[left] {$-$} (l1);
				
				\draw[thick, dashed, postaction={decorate}, decoration={
					markings,
					mark=at position 0.05 with {\arrowreversed{Triangle}}
				}] (m1) to[out=45, in=-90] node[right] {$-$} (r1);
				
				\draw[thick, postaction={decorate}, decoration={
					markings,
					mark=at position 0.95 with {\arrow{Triangle}}
				}] (l1) to[out=-60,in=-120] node[below] {$+$} (r1);
				
				\draw[thick, dashed, postaction={decorate}, decoration={
					markings,
					mark=at position 0.95 with {\arrow{Triangle}}
				}] (r1) to[out=120, in=60] node[below] {$+$} (l1);
				
				\coordinate (m2) at (4,0);
				\coordinate (l2) at (3,2);
				\coordinate (r2) at (5,2);
				\coordinate (a2) at (3,0);
				
				\draw[fill=black] (m2) circle (1.5pt) node[below, yshift=-2mm] {$x^{-1}w$};
				\draw[fill=black] (l2) circle (1.5pt);
				\draw[fill=black] (r2) circle (1.5pt);
				\draw[thick, red] (m2) circle (5pt);
				\node[thick, red] at (a2) {$*$};
				
				\draw[thick, dashed, postaction={decorate}, decoration={
					markings,
					mark=at position 0.05 with {\arrowreversed{Triangle}}
				}] (m2) to[out=135, in=-90] node[left] {$-$} (l2);
				
				\draw[thick, postaction={decorate}, decoration={
					markings,
					mark=at position 0.05 with {\arrowreversed{Triangle}}
				}] (m2) to[out=45, in=-90] node[right] {$-$} (r2);
				
				\draw[thick, dashed, postaction={decorate}, decoration={
					markings,
					mark=at position 0.95 with {\arrow{Triangle}}
				}] (l2) to[out=-60,in=-120] node[below] {$+$} (r2);
				
				\draw[thick, postaction={decorate}, decoration={
					markings,
					mark=at position 0.95 with {\arrow{Triangle}}
				}] (r2) to[out=120, in=60] node[below] {$+$} (l2);
				
				\coordinate (m3) at (-4,0);
				\coordinate (l3) at (-5,2);
				\coordinate (r3) at (-3,2);
				\coordinate (a3) at (-5,0);
				
				\draw[fill=black] (m3) circle (1.5pt) node[below, yshift=-2mm] {$y^{-1}z$};
				\draw[fill=black] (l3) circle (1.5pt);
				\draw[fill=black] (r3) circle (1.5pt);
				\draw[thick, red] (m3) circle (5pt);
				\node[thick, red] at (a3) {$*$};
				
				\draw[thick, postaction={decorate}, decoration={
					markings,
					mark=at position 0.05 with {\arrowreversed{Triangle}}
				}] (m3) to[out=135, in=-90] node[left] {$-$} (l3);
				
				\draw[thick, dashed, postaction={decorate}, decoration={
					markings,
					mark=at position 0.05 with {\arrowreversed{Triangle}}
				}] (m3) to[out=45, in=-90] node[right] {$-$} (r3);
				
				\draw[thick, dashed, postaction={decorate}, decoration={
					markings,
					mark=at position 0.95 with {\arrow{Triangle}}
				}] (l3) to[out=-60,in=-120] node[below] {$+$} (r3);
				
				\draw[thick, postaction={decorate}, decoration={
					markings,
					mark=at position 0.95 with {\arrow{Triangle}}
				}] (r3) to[out=120, in=60] node[below] {$+$} (l3);

				\coordinate (m4) at (-2,-4);
				\coordinate (l4) at (-3,-2);
				\coordinate (r4) at (-1,-2);
				\coordinate (a4) at (-3,-4);
				
				\draw[fill=black] (m4) circle (1.5pt) node[below, yshift=-2mm] {$y^{-1}w$};
				\draw[fill=black] (l4) circle (1.5pt);
				\draw[fill=black] (r4) circle (1.5pt);
				\draw[thick, red] (m4) circle (5pt);
				\node[thick, red] at (a4) {$*$};
				
				\draw[thick, dashed, postaction={decorate}, decoration={
					markings,
					mark=at position 0.05 with {\arrowreversed{Triangle}}
				}] (m4) to[out=135, in=-90] node[left] {$-$} (l4);
				
				\draw[thick, postaction={decorate}, decoration={
					markings,
					mark=at position 0.05 with {\arrowreversed{Triangle}}
				}] (m4) to[out=45, in=-90] node[right] {$-$} (r4);
				
				\draw[thick, postaction={decorate}, decoration={
					markings,
					mark=at position 0.95 with {\arrow{Triangle}}
				}] (l4) to[out=-60,in=-120] node[below] {$+$} (r4);
				
				\draw[thick, dashed, postaction={decorate}, decoration={
					markings,
					mark=at position 0.95 with {\arrow{Triangle}}
				}] (r4) to[out=120, in=60] node[below] {$+$} (l4);

				\coordinate (m5) at (2,-4);
				\coordinate (l5) at (1,-2);
				\coordinate (r5) at (3,-2);
				\coordinate (a5) at (1,-4);
				
				\draw[fill=black] (m5) circle (1.5pt) node[below, yshift=-2mm] {$1$};
				\draw[fill=black] (l5) circle (1.5pt);
				\draw[fill=black] (r5) circle (1.5pt);
				\draw[thick, red] (m5) circle (5pt);
				\node[thick, red] at (a5) {$*$};
				
				\draw[thick, postaction={decorate}, decoration={
					markings,
					mark=at position 0.05 with {\arrowreversed{Triangle}}
				}] (m5) to[out=135, in=-90] node[left] {$-$} (l5);
				
				\draw[thick, postaction={decorate}, decoration={
					markings,
					mark=at position 0.05 with {\arrowreversed{Triangle}}
				}] (m5) to[out=45, in=-90] node[right] {$-$} (r5);
				
				\draw[thick, dashed, postaction={decorate}, decoration={
					markings,
					mark=at position 0.95 with {\arrow{Triangle}}
				}] (l5) to[out=-60,in=-120] node[below] {$+$} (r5);
				
				\draw[thick, dashed, postaction={decorate}, decoration={
					markings,
					mark=at position 0.95 with {\arrow{Triangle}}
				}] (r5) to[out=120, in=60] node[below] {$+$} (l5);
			\end{tikzpicture}
			\subcaption{Spanning trees and weights for \ref{fig:eight_diag}}
			\label{fig:eight_graphs}
		\end{subfigure}
		
		\caption{Spanning tree contributions for the figure eight digraph}
	\end{figure}
	
	For larger diagrams, enumerating spanning trees by hand is impractical. Figure \ref{fig:810} shows an alternating diagram of the knot $8_{10}$---to compute $P_{8_{10}}$, we use the region and crossing indices shown to write down the reduced Kasteleyn matrix $A$ below.
	
	\begin{figure}
		\centering
		\begin{tikzpicture}
			\path[spath/save=fig810]
				(0,0) .. controls +(0:1) and +(180:1) .. 
				(3.5,0) .. controls +(0:1) and +(0:1) .. 
				(3.5,1.5) .. controls +(180:1) and +(0:1) .. 
				(0,1.5) .. controls +(180:2) and +(180:2) .. 
				(0,4.5) .. controls +(0:1) and +(180:1) .. 
				(4,4.5) .. controls +(0:2.5) and +(0:2.5) .. 
				(4,-2) .. controls +(180:1) and +(-90:0.5) .. 
				(2.5,-1) .. controls +(90:1) and +(-90:1) .. 
				(2.5,5) .. controls +(90:2) and +(90:2) .. 
				(-3,5) .. controls +(-90:1) and +(90:1) .. 
				(-3,-1) .. controls +(-90:2) and +(-90:2) ..
				(1,-1) .. controls +(90:1) and +(-90:1) ..
				(1,2) .. controls +(90:1) and +(0:1) .. 
				(-1,3) .. controls +(180:1) and +(0:1) .. 
				(-3,3) .. controls +(180:2.5) and +(180:2.5) .. 
				(-3,0) .. controls +(0:1) and (180:1) .. (0,0);		
			
			\tikzset{
				every fig810 component/.style={draw, ultra thick, red},
				spath/knot={fig810}{15pt}{1,3,5,8,10,12,14,16}
			}
			
			\node at (-1.5,0.75) {$6$};
			\node at (1.75,0.75) {$7$};
			\node at (3.25,0.75) {$8$};
			\node at (4,3) {$3$};
			\node at (1.75,3) {$2$};
			\node at (-0.25,2.25) {$4$};
			\node at (-4,1.5) {$5$};
			\node at (-0.25,5.5) {$1$};
			
			\node[shape=circle,draw,inner sep=2pt, xshift=-10pt, yshift=-10pt] at (1,0) {$7$};
			\node[shape=circle,draw,inner sep=2pt, xshift=-10pt, yshift=-10pt] at (2.5,0) {$8$};
			\node[shape=circle,draw,inner sep=2pt, xshift=-10pt, yshift=-10pt] at (1,1.5) {$4$};
			\node[shape=circle,draw,inner sep=2pt, xshift=-10pt, yshift=-10pt] at (2.5,1.5) {$5$};
			\node[shape=circle,draw,inner sep=2pt, xshift=-10pt, yshift=-10pt] at (2.5,4.5) {$1$};
			\node[shape=circle,draw,inner sep=2pt, xshift=-10pt, yshift=-10pt] at (-3,3) {$2$};
			\node[shape=circle,draw,inner sep=2pt, xshift=-10pt, yshift=-10pt] at (-3,0) {$6$};
			\node[shape=circle,draw,inner sep=2pt, xshift=-10pt, yshift=-10pt] at (-1.47,3) {$3$};
			
			\draw[-{Stealth}, line width=1.5pt, red] (-1.5,0) -- (-1.4,0);
			\draw[-{Stealth}, line width=1.5pt, red] (1,4.5) -- (1.1,4.5);
			
		\end{tikzpicture}
		\caption{The knot $8_{10}$}
		\label{fig:810}
	\end{figure}
	
	$$
	A = \begin{bmatrix}
		1 & -y^{-1} & 1 & 0  & 0 & 0 & 0 & 0 \\
		-w & 0 & 0 & 0 & z & 1 & 0 & 0 \\
		z & 1 & 0 & -w & 0 & 1 & 0 & 0 \\
		0 & 1 & 0 & z & 0 & 1 & -w & 0 \\
		0 & x^{-1} & 1 & 0 & 0 & 0 & 1 & -y^{-1} \\
		0 & 0 & 0 & 0 & -w & 1 & 0 & 0 \\
		0 & 0 & 0 & 0 & 0 & 1 & z & 0 \\
		0 & 0 & 1 & 0 & 0 & 0 & 1 & x^{-1}
	\end{bmatrix}.
	$$
	
	We find that
	\begin{align*}
	P_{8_{10}} &= \det(A) \\
	&= (x^{-2} + x^{-1} y^{-1} + y^{-2})(w^4+w^3z+w^2z^2+wz^3+z^4) \\
	&\ \ \ \ \ + (x^{-1} + y^{-1})(w+z)(w^2+wz+z^2),
	\end{align*}
	as stated in the introduction.
	
	\section{Root and Basepoint Independence}
	\label{sec:root_indep}
	
	\subsection{Properties of Tait digraphs}
	
		In this section, we prove the polynomial $P_K$ of Definition \ref{def:poly} does not depend on the choice of adjacent root and basepoint. First, we gather some combinatorial facts about Tait digraphs.
		
		Fix an alternating link diagram $D$ with Tait digraph $G$. We recall that {\em resolving a crossing of $D$ (according to its orientation)} is the operation shown in Figure \ref{fig:resolve}, and that resolving a crossing in an alternating link diagram produces another alternating diagram.
	
	\begin{figure}
		\centering
	
		\begin{tikzpicture}
			\draw[-{Stealth}, ultra thick, red] (-6,-1) -- (-4,1);
			\draw[ultra thick, red] (-4,-1) -- (-4.85,-0.15);
			\draw[{Stealth}-, ultra thick, red] (-6,1) -- (-5.15,0.15);
			
			\draw[->, thick] (-3.25,0) -- (-1.75,0);
			
			\draw[-{Stealth}, ultra thick, red] (-1,-1) to[out=45,in=-45] (-1,1);
			\draw[-{Stealth}, ultra thick, red] (1,-1) to[out=135,in=-135] (1,1);
			
			\draw[->, thick] (3.25,0) -- (1.75,0);
	
			\draw[-{Stealth}, ultra thick, red] (6,-1) -- (4,1);
			\draw[ultra thick, red] (4,-1) -- (4.85,-0.15);
			\draw[{Stealth}-, ultra thick, red] (6,1) -- (5.15,0.15);
		\end{tikzpicture}
		
		\caption{Resolving crossings}
		\label{fig:resolve}
	\end{figure}
	
	\begin{lemma}
		\label{lem:resolve}
		Fix a crossing $c$ of $D$, and let $D'$ be the alternating link diagram obtained by resolving $c$. Let $e$ be the edge of $G$ passing through $c$, and let $G'$ be the Tait digraph of $D'$. Then:
		\begin{enumerate}[label=(\roman*)]
			\item If $c$ is a positive crossing of $D$, then $G' = G / e$.
			\item If $c$ is a negative crossing of $D$, then $G' = G \setminus e$.
		\end{enumerate}
	\end{lemma}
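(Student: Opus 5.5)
The proof is a local analysis at the crossing $c$, since resolving $c$ leaves the diagram unchanged outside a small disk around it. By Convention \ref{conv:type}, $c$ is a type I crossing of $D$ with respect to its shading. Near $c$ there are four regions: two shaded, containing the endpoints of $e$ (possibly the same vertex if $e$ is a loop), and two unshaded. The oriented resolution replaces the crossing with two arcs. These arcs either connect the two shaded corners into one region, or they connect the two unshaded corners into one region and separate the shaded ones.

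First, I will determine which of the two happens, using Figure \ref{fig:sign_comparison}. For a type I positive crossing, the shaded regions lie above and below the crossing, and the strands run from bottom-left to top-right and from bottom-right to top-left. The oriented resolution gives two arcs bending left and right, as in Figure \ref{fig:resolve}. These arcs leave a vertical channel through the crossing that joins the top and bottom regions. So the two shaded regions merge, and in the Tait graph the two endpoints of $e$ are identified and $e$ disappears; that is, $G' = G/e$. For a negative type I crossing, Figure \ref{fig:sign_comparison} shows the shaded regions on the left and right. The same oriented resolution again opens the vertical channel, which now joins the unshaded regions and keeps the shaded ones apart. The shaded regions are then unchanged as vertices, and the only change is that the edge through $c$ is gone; that is, $G' = G\setminus e$.

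Next, I will check that the checkerboard shading of $D$ restricts to a valid shading of $D'$. Every other crossing keeps its local picture, so it stays type I, and Convention \ref{conv:type} therefore picks out this shading for $D'$. The orientation of $D'$ is inherited from $D$. Every other crossing keeps its overstrand and that strand's orientation, so each remaining edge keeps its direction in the Tait digraph. The embedding of the remaining edges is also unchanged, so the identification holds as plane digraphs.

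The main obstacle is bookkeeping rather than depth. I need to confirm the claim about which corners the oriented resolution merges, and make sure it is consistent with the crossing-sign and type conventions of Figures \ref{fig:crossing_shading} and \ref{fig:sign_comparison}. One edge case also needs care. If $D'$ becomes split, for instance when $e$ is a bridge in case (ii), then $G\setminus e$ is disconnected, and the statement should be read as an equality of plane digraphs despite Convention \ref{conv:split}. If $e$ is a loop in case (i), contracting $e$ means deleting it, and the local picture still applies because the two shaded corners are already the same region.
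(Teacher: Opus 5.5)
Your proposal is correct and is exactly the local check the paper leaves to the reader. Up to rotation, every type I crossing looks like Figure~\ref{fig:sign_comparison} with both strands pointing up, so the oriented smoothing opens a vertical channel: at a positive crossing it merges the two shaded corners (giving $G/e$), and at a negative crossing it merges the two unshaded corners (giving $G \setminus e$), while the shading, crossing types, edge directions and embedding away from $c$ are unchanged. Your treatment of the loop and bridge cases, where $D'$ becomes split, also agrees with how the paper uses the lemma in Corollary~\ref{cor:closed}, which explicitly allows split diagrams.
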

	
	We leave the proof to the reader.
	
	\begin{cor}
		\label{cor:closed}
		The set of Tait digraphs of alternating link diagrams (including split diagrams) is closed under the operations of contracting positive edges and deleting negative edges.
	\end{cor}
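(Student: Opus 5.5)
The statement to prove is Corollary \ref{cor:closed}. It should follow directly from Lemma \ref{lem:resolve}, with one point to check: that the result of each operation is again the Tait digraph of an \emph{alternating} diagram, with the shading still chosen by Convention \ref{conv:type}.

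Let $G$ be the Tait digraph of an alternating diagram $D$, possibly split. Let $e$ be an edge of $G$ passing through the crossing $c$. Resolve $c$ according to its orientation, as in Figure \ref{fig:resolve}, to obtain a diagram $D'$. The diagram $D'$ is alternating: the resolution removes one crossing, and along each new strand the remaining crossings are still met alternately over and under. The checkerboard shading of $D$ restricts naturally to a shading of $D'$. Every remaining crossing keeps its local picture, so it keeps its type. All crossings of $D'$ are therefore still type I, and this shading is the one prescribed by Convention \ref{conv:type}. The remaining crossings also keep their signs and overstrand orientations. Hence the Tait digraph of $D'$ is obtained from $G$ by the local operation at $e$, and the directions of all other edges are unchanged.

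Which local operation occurs depends on how the oriented resolution of a type I crossing interacts with the shading, and Lemma \ref{lem:resolve} identifies it. If $c$ is positive, the oriented smoothing merges the two shaded regions at $c$, so $G' = G/e$. If $c$ is negative, it separates them, so $G' = G \setminus e$. One way to see this is from Figure \ref{fig:sign_comparison}: at a positive type I crossing the oriented strands bound the shaded sectors in such a way that smoothing joins them, and the negative case is the mirror image.

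Now induct on the number of operations. A single contraction of a positive edge, or deletion of a negative edge, of $G$ yields the Tait digraph of $D'$, which is again an alternating diagram. The edges of $D'$ keep their signs, so further operations of the same kind can be applied to $D'$.

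The only subtlety is in the deletion case. The diagram $D'$ may become split, or deleting $e$ may leave an isolated vertex. This is why the corollary explicitly allows split diagrams, and why the paper's convention deletes isolated vertices implicitly; a loop edge contracted in the positive case similarly corresponds to a nugatory crossing. I expect verifying that these degenerate cases still match the conventions to be the main, though minor, obstacle.
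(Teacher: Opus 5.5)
Your proposal is correct and follows the paper's route: the paper reads the corollary directly off Lemma \ref{lem:resolve}, using the fact that oriented resolution preserves alternation and the type I shading, and these are exactly the points you verify. The degenerate case you flag is harmless: the isolated vertex left by deleting a negative leaf edge corresponds to a crossingless split circle bounding a shaded disk, and discarding that circle from $D'$ matches the paper's convention of deleting isolated vertices.
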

	
	For the next lemma we recall that a vertex in a digraph is a {\em source} if all its edges are outgoing, and a {\em sink} if all its edges are incoming. Additionally, a vertex in a plane digraph is {\em alternating} if its edges alternate between incoming and outgoing when circling it in the plane.
	
	\begin{lemma}
		\label{lem:spec_alt}
		Let $D$ be a (non-split) alternating link diagram with Tait digraph $G$. Then:
		\begin{enumerate}[label=(\roman*)]
			\item $D$ has only positive crossings if and only if every vertex of $G$ is alternating.
			\item $D$ has only negative crossings if and only if every vertex of $G$ is either a source or a sink.
		\end{enumerate}
	\end{lemma}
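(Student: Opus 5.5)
The plan is to reduce the lemma to a local analysis at a single shaded region, using the geometry near one crossing as already encoded in Figure~\ref{fig:sign_comparison}. Fix a vertex $v$ of $G$, i.e.\ a shaded region $R$ of $D$. Walking around $\partial R$ counterclockwise, we meet the crossings $c_1, \dots, c_m$ on its boundary in cyclic order. Each consecutive pair $c_j, c_{j+1}$ is joined by an arc of $D$ running along $\partial R$, and this arc is an edge of the diagram with an orientation. Since every crossing is type I (Convention~\ref{conv:type}), the local picture at each $c_j$, viewed from $R$, is one of four pictures: the two of Figure~\ref{fig:sign_comparison}, each with $R$ placed either as the head or as the tail region. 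From these pictures we read off the direction of the Tait edge $e_j$ relative to $v$.

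The key step is to characterize, in each of the four cases, how the two boundary arcs of $R$ at $c_j$ are oriented relative to the counterclockwise direction around $R$. We then record whether $e_j$ is incoming or outgoing at $v$.

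For a positive crossing, the two strands are oriented coherently around $R$: one arc enters $c_j$ and the other leaves it, both travelling in the same rotational direction about $R$. The Tait edge points toward or away from $R$ according to which of the two strands is the overstrand.

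For a negative crossing, the two boundary arcs of $R$ at $c_j$ are either both oriented into $c_j$ or both oriented out of $c_j$. The picture should also show that the edge points into $R$ exactly when the arcs point out of $c_j$, or the reverse. This is the case analysis I expect to be the main obstacle. The convention for the over-strand and the type I shading must be checked carefully in all four local pictures, in particular that the dependence is on crossing sign alone and not on which side $R$ lies.

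With the local rule in hand, the global statements follow by propagation around $\partial R$. Each boundary arc between $c_j$ and $c_{j+1}$ has a single orientation, so its orientation relative to the counterclockwise walk is the same at both of its ends.

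\textbf{All crossings positive.} The boundary of $R$ is then a coherently oriented cycle. Alternation of over and under along this cycle, which is the alternating property, makes consecutive edges alternate between incoming and outgoing, so $v$ is an alternating vertex.

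\textbf{All crossings negative.} The arcs alternate between pointing into and out of successive crossings. Combined with alternation of over and under, this forces every edge at $v$ to have the same direction, so $v$ is a source or a sink.

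For the converses of (i) and (ii), I would argue contrapositively. Suppose a negative crossing $c_j$ appears on some region. The local rule shows that at $c_j$ the orientation pattern of the arcs flips relative to the coherent pattern. Using the over/under alternation, this flip makes either $e_{j-1}, e_j$ or $e_j, e_{j+1}$ have the same direction at $v$, which contradicts alternation. The converse of (ii) is symmetric: a positive crossing produces two consecutive edges of opposite direction at some vertex, contradicting the source/sink condition. Non-splitness is used only to ensure that every crossing lies on the boundary of some shaded region with at least one edge, so every crossing is detected at some vertex. Degenerate cases such as loops, where a region touches the same crossing twice, should be treated the same way, viewing the two visits as two half-edge slots in the cyclic order.
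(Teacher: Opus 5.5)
Your strategy (read a local rule off the type~I pictures and propagate it around $\partial R$) is the one the paper uses. However, the local rule you guessed is reversed, and the propagation steps are wrong too.

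In the positive picture of Figure~\ref{fig:sign_comparison}, the shaded region above the crossing is bounded by the upper-right ray (overstrand) and the upper-left ray (understrand). On \emph{both} rays the link points away from the crossing, and the shaded region below has both strands pointing into the crossing. In the negative picture, the shaded region on the left is bounded by the lower-left ray (understrand, into the crossing) and the upper-left ray (overstrand, out of it), which is the coherent pattern. So for type~I crossings, positive means ``both in or both out'' and negative means ``coherent,'' the opposite of what you wrote.

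A global check confirms this. For the right-handed trefoil, drawn as the closure of $\sigma_1^3$, the type~I shaded regions are the three bigons between consecutive crossings, whose two sides are oriented in parallel. So in the all-positive case $\partial R$ is \emph{not} coherently oriented. For the left trefoil, the shaded regions are the two Seifert disks, and Figure~\ref{fig:ditait} has one source and one sink.

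Your deductions also do not follow even from your own rules. Type~I means a counterclockwise walk around $\partial R$ reaches every crossing along the under-arc and leaves along the over-arc. That is all the over/under alternation amounts to, and since it is the same at every crossing it produces no flipping. So with a coherently oriented boundary, the overstrand at every $c_j$ points the same way relative to $c_j$, and all edges at $v$ have the same direction. Your positive-case argument therefore actually yields a source or sink. Symmetrically, ``edge into $R$ iff both arcs leave $c_j$,'' together with arcs alternating between in and out, gives alternating edges. The two mistakes cancel in the final statements, but the argument as written is invalid.

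The correct version is short. Let $a_j$ be the arc from $c_j$ to $c_{j+1}$.
\begin{itemize}
\item $e_j$ points into $R$ iff the overstrand leaves $c_j$ along $a_j$, i.e.\ iff $a_j$ is oriented counterclockwise.
\item $c_{j+1}$ is positive iff $a_j$ and $a_{j+1}$ have opposite rotational orientations.
\end{itemize}
Hence $e_j,e_{j+1}$ have opposite directions iff $c_{j+1}$ is positive. This gives all four implications at once, and it is exactly the rule the paper invokes in its converse: if $e$ follows $e'$ counterclockwise and they alternate, then $e$ is positive.

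Finally, non-splitness is needed so that each region is a disk and $\partial R$ is a single cycle whose crossing order matches the cyclic order of edges at $v$. The paper notes (ii) can fail otherwise. It is not needed to put every crossing on a shaded region, which holds in any diagram.
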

	
	Lemma \ref{lem:spec_alt} is implicit in \cite{must03, kmp25}, though the Tait digraph is not defined there.
	
	\begin{proof}
		For (i), suppose $D$ has only positive crossings and fix a vertex $v$ of $G$. Then $v$ sits in a shaded region of $D$ bordered by positive type I crossings, and it is not difficult to check that if a given edge adjacent to $v$ is outgoing, then the next edge encountered in a clockwise or counter-clockwise orbit of $v$ must be incoming, and vice versa. An example is shown in Figure \ref{fig:alt_region}.
		
		Conversely, suppose $v$ is an alternating vertex of $G$, and let $e$ and $e'$ be two edges adjacent to $v$ so that $e$ is encountered immediately after $e'$ in a counter-clockwise orbit around $v$. Then it follows from the alternating hypothesis that $e$ is positive, and moving around $v$ we conclude that all edges bordering $v$ are positive. The second statement of the lemma is proven similarly.
	\end{proof}
	
	\begin{figure}
		\centering
		
		\begin{tikzpicture}
			\fill[gray!20] (-1,0) -- (-2,0) -- (-2,1) -- (-1,1) -- cycle;
			\fill[gray!20] (1,0) -- (2,0) -- (2,1) -- (1,1) -- cycle;
			\fill[gray!20] (-1,0) -- (1,0) -- (1,-1) -- (-1,-1) -- cycle;
							
			\draw[-{Stealth}, ultra thick, red] (-1,-1) -- (-1,1);
			\draw[{Stealth}-, ultra thick, red] (-2, 0) -- (-1.15,0);
			\draw[ultra thick, red] (-0.85,0) -- (2,0);
			\draw[ultra thick, red] (1,-1) -- (1,-0.15);
			\draw[-{Stealth}, ultra thick, red] (1,0.15) -- (1,1);
			
			\draw[-Triangle, thick] (0,-1) -- (-1.75,0.75);
			\draw[Triangle-, thick] (0.05,-0.95) -- (1.75,0.75);
			\draw[fill=black] (0,-1) circle (1.5pt);
			
		\end{tikzpicture}
		
		\caption{Adjacent positive crossings}
		\label{fig:alt_region}
	\end{figure}
	
	From Lemma \ref{lem:spec_alt}, we obtain:
	
	\begin{lemma}
		\label{lem:cut_cyc}
		As usual, write $E_+$ and $E_-$ for the positive and negative edges of a Tait digraph $G = (V,E)$. Then:
		\begin{enumerate}[label=(\roman*)]
			\item $E_+$ can be written as a disjoint union of directed cycles of $G$, and
			\item $E_-$ can be written as a disjoint union of directed cuts.
		\end{enumerate}
	\end{lemma}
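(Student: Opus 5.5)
The plan is to reduce both statements to Lemma \ref{lem:spec_alt} via Corollary \ref{cor:closed}, applied to auxiliary diagrams built from $D$. Statement (ii) will then follow from (i) by planar duality.

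For (i), resolve every negative crossing of $D$ according to its orientation. By Lemma \ref{lem:resolve}(ii) this deletes exactly the edges of $E_-$, so the resulting diagram $D'$ has Tait digraph $G \setminus E_- = (V, E_+)$. The diagram $D'$ is alternating and has only positive crossings, though it may be split. Each non-split component of $D'$ is a special alternating diagram, and its Tait digraph is a union of connected components of $(V,E_+)$, with isolated vertices dropped. By Lemma \ref{lem:spec_alt}(i), every vertex of each such component is alternating in the plane. In particular every vertex of $(V,E_+)$ has equal in-degree and out-degree, so $(V,E_+)$ is Eulerian as a digraph. A standard argument then finishes (i): start at any edge and follow out-edges until a vertex repeats, which yields a directed cycle; remove it, note that balance is preserved, and induct. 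This decomposes $E_+$ into disjoint directed cycles.

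For (ii), there are two routes. The first is to contract all positive edges, which by Lemma \ref{lem:resolve}(i) and Corollary \ref{cor:closed} gives the Tait digraph $G/E_+$ of a diagram with only negative crossings. By Lemma \ref{lem:spec_alt}(ii), every vertex of $G/E_+$ is a source or a sink. Each vertex $u$ of $G/E_+$ corresponds to a set $S_u \subset V$, namely a component of $(V,E_+)$. The negative edges incident to $u$ form a directed cut of $G$ between $S_u$ and its complement: all edges leave $S_u$ if $u$ is a source, and all enter $S_u$ if $u$ is a sink. These cuts overlap, however, since each negative edge joins a source to a sink and is counted at both ends. To make them disjoint, note that the sources and sinks properly 2-color $G/E_+$, so $G/E_+$ is bipartite. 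Taking the union of $S_u$ over all source vertices $u$ gives a single set $S$ whose boundary is exactly $E_-$, with every edge directed out of $S$. Then $E_-$ is a single directed cut; if one wants it as a disjoint union of minimal cuts, split it further by components.

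The second route, which I would prefer for clean bookkeeping, dualizes (i) via the mirror construction in the proof of Proposition \ref{prop:companions}. The negative edges of $G$ correspond to positive edges of the Tait digraph $G'$ of $m(D)$, which coincides with $G^*$ as an undirected graph, with orientations agreeing up to a global convention on $E_-$. Directed cycles in a plane digraph are dual to directed cuts. So applying (i) to $G'$ and dualizing gives (ii).

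The main obstacle is the possibly split diagram $D'$ in (i): Lemma \ref{lem:spec_alt} is stated for non-split diagrams, so I would check that it applies componentwise. I would also check that the resolved diagrams really are alternating with the same type I shading convention, so that the Tait digraphs match as Lemma \ref{lem:resolve} claims.
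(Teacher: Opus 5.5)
Your part (i) is the paper's argument: delete $E_-$ (Lemma \ref{lem:resolve}(ii)), apply Lemma \ref{lem:spec_alt}(i) to each non-split piece, and peel directed cycles off a balanced digraph. The splitting issue you flag is harmless there, because in/out-degree balance adds over pieces.

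Your first route for (ii) fails at its first step. The diagram $D_-$ obtained by resolving the positive crossings has Tait digraph $G/E_+$, but $D_-$ is split. In fact it is split as soon as $E_+\neq\varnothing$. The two arcs produced by smoothing a positive crossing border the merged shaded region with opposite orientations. Along a boundary circle whose corners are all negative, the orientation relative to the region is constant. So those two arcs lie on different boundary circles.

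Hence Lemma \ref{lem:spec_alt}(ii) does not apply to $G/E_+$, and its conclusion genuinely fails. A shaded region of $D_-$ with several boundary circles is a cut vertex of $G/E_+$. It can be a sink for the piece on one circle and a source for the piece on another.

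For a concrete example, take the alternating diagram of a connected sum of three Hopf links whose Tait graph has vertices $i_1,a,b,i_2$ and three bigons. The bigon between $a$ and $b$ is a positive directed $2$-cycle. A negative pair of edges joins $i_1$ and $a$, drawn inside the bounded face of that $2$-cycle. A negative pair joins $b$ and $i_2$, drawn outside it. Orient the four components so that the middle clasp is positive and the other two are negative.

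Smoothing the two positive crossings turns $a\cup b$ into an annulus. Its inner boundary circle carries the first clasp and its outer circle carries the second. Each smoothing contributes one arc to each circle, so the two circles are oppositely oriented relative to the annulus. Because an edge at a corner points into a shaded region exactly when the overstrand leaves the crossing along that region's boundary, one negative pair points into $a$ and the other points out of $b$ (or the reverse).

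So the merged vertex of $G/E_+$ has two incoming and two outgoing edges. Moreover, $E_-$ is not a single directed cut: such a cut would have to separate $a$ from $b$, which are joined by positive edges. Both your source/sink claim and your stronger conclusion are therefore false in general. The paper deals with exactly this by splitting $D_-$ into non-split pieces whose Tait digraphs $G_-^i$ meet only in cut vertices. It then takes the stars of the sinks within each $G_-^i$ separately.

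Your second route is correct, and it differs genuinely from the paper's. The orientation comparison in the proof of Proposition \ref{prop:companions} precedes this lemma and does not use it. It shows that the positive edges of the Tait digraph $G'$ of $m(D)$ are exactly the duals of the edges in $E_-$, with $e'=e^*$ uniformly. Applying (i) to $m(D)$ partitions them into directed cycles of $G^*$. By the Jordan curve theorem and the rotation convention of Figure \ref{fig:dual_o}, each such cycle is dual to a bond of $G$ all of whose edges cross it in the same direction.

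This buys you a lot. The only split-diagram bookkeeping is the harmless one inside (i), the mixed cut-vertex phenomenon on the contraction side never arises, and the cuts you obtain are minimal. The paper's route instead treats (i) and (ii) uniformly on the single diagram $D$, but must handle the cut vertices by hand. You should discard the first route and present the second.
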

	
	For the proof, recall that a {\em cut vertex} of a connected graph is a vertex whose removal disconnects the graph.
	
	\begin{proof}
		Let $G_- = (V, E_-)$ be the graph obtained from $G$ by contracting every edge in $E_+$; then $G_-$ is a Tait digraph by Corollary \ref{cor:closed}, and we denote its link diagram by $D_-$. Lemma \ref{lem:spec_alt} (ii) may not hold for $G_-$ if the diagram $D_-$ is split, since then a region of $D_-$ may have more than one boundary component. In this case, however, we write $D_-$ as a disjoint union of non-split diagrams
		$$
		D_- = D_-^1 \sqcup D_-^2 \sqcup \cdots \sqcup D_-^m.
		$$
		Let $G_-^i$ be the Tait digraph of $D_-^i$. Then $G_-$ is equal to the union of the $G_-^i$,
		$$
		G_- = G_-^1 \cup G_-^2 \cup \cdots \cup G_-^m,
		$$
		where $G_-^i \cap G_-^j$ is either empty or a cut vertex for any $i, j \in \{1, \dots, m\}$ with $i \neq j$.
		
		Fix $i \in \{1, \dots, m\}$; then by Lemma \ref{lem:spec_alt} (ii)  each vertex of $G^i_-$ is either a source or a sink in $G^i_-$. Let $v_1, \dots, v_k$ be the set of sink vertices of $G^i_-$, and for each $j \in \{1, \dots, k\}$ let $B_j$ be the set of edges adjoining $v_j$. Then each $B_j$ is a directed cut of $G^i_-$, and since each edge of $G^i_-$ meets exactly one sink we can write $E(G^i_-)$ as the disjoint union
		$$
		E(G^i_-) = B_1 \sqcup \cdots \sqcup B_k.
		$$
		Joining graphs along cut vertices does not affect which sets are cuts, so it follows that we can write $E(G_-) = E_-$ as a disjoint union of directed cuts. Since $G_-$ was obtained from $G$ by contracting edges, each directed cut of $G_-$ is also a directed cut of $G$, proving (ii).
		
		Dually, let $G_+ = (V, E_+)$ be the graph obtained from $G$ by deleting every edge in $E_-$. By the same argument as above, using Corollary \ref{cor:closed} and Lemma \ref{lem:spec_alt} (i), we can write 
		$$
		G_+ = G_+^1 \cup G_+^2 \cup \cdots \cup G_+^m,
		$$
		where $G_+^i \cap G_+^j$ is either empty or a cut vertex for $i \neq j$, and each vertex of $G_+^i$ is alternating in $G_+^i$ for all $i \in \{1, \dots, m\}$. In particular each vertex of $G_+$ has an equal number of incoming and outgoing edges, which is a necessary and sufficient condition for writing $E_+$ as a disjoint union of directed cycles
		$$
		E_+ = C_1 \sqcup \cdots \sqcup C_k
		$$
		of $G_+$. (This can be proven by induction on $|E_+|$, by following a directed path until it closes and then deleting its edges from the graph.) Since $G_+$ was formed from $G$ by deleting edges, each $C_i$ is also a directed cycle of $G$.
	\end{proof}
	
	Lemma \ref{lem:cut_cyc} has two useful corollaries, both of which are known to experts.
	
	\begin{cor}
		\label{cor:bip_eul}
		Let $G$, $E_+$ and $E_-$ be as above. If $E_+ = \varnothing$, then $G$ is bipartite. If $E_- = \varnothing$, then $G$ is Eulerian.
	\end{cor}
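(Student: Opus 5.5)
The plan is to deduce both statements directly from Lemma \ref{lem:cut_cyc}. Work with the Tait digraph $G = (V, E_+ \sqcup E_-)$ of a non-split alternating diagram, per Convention \ref{conv:split}.

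\emph{Case $E_+ = \varnothing$.} Here every edge is negative, so part (ii) of Lemma \ref{lem:cut_cyc} writes $E = E_-$ as a disjoint union of directed cuts $B_1 \sqcup \cdots \sqcup B_k$. I would like to produce a $2$-coloring, but a decomposition into arbitrary cuts does not immediately give one. So the cleaner route is to go back to the proof of Lemma \ref{lem:cut_cyc}. When $E_+ = \varnothing$, no contraction happens, so $G_- = G$. Since $D$ is non-split, we may apply Lemma \ref{lem:spec_alt} (ii) directly to $G$: every vertex of $G$ is a source or a sink.

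Color the sources one color and the sinks the other. Every edge runs from its tail, which is not a sink, to its head, which is not a source. Hence every edge joins a source to a sink, and $G$ is bipartite. One degenerate point needs attention. An isolated vertex has no edges, and only occurs for the trivial one-vertex graph. A loop would be an edge whose tail equals its head, which is impossible, since that vertex would then have both an outgoing and an incoming edge. This is consistent with the fact that a loop would come from a nugatory crossing.

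Alternatively, one can argue via cycles. Every cycle of $G$ meets each directed cut $B_j$, the set of edges at a sink $v_j$, in an even number of edges, since the cycle enters and leaves $v_j$. Summing over $j$ shows every cycle has even length. I would present the direct source/sink coloring as the main argument.

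\emph{Case $E_- = \varnothing$.} Every edge is positive, so part (i) of Lemma \ref{lem:cut_cyc} writes $E = E_+$ as a disjoint union of directed cycles $C_1 \sqcup \cdots \sqcup C_k$. Each directed cycle contributes equally to the in-degree and out-degree of each vertex it passes through. Therefore every vertex has even total degree, which is equivalent to $G$ being Eulerian for a connected graph. Connectivity holds because the Tait graph of a non-split diagram is connected. In fact we get the stronger property that in-degree equals out-degree at every vertex, so $G$ is Eulerian even as a digraph. This also follows from Lemma \ref{lem:spec_alt} (i), since every vertex is alternating.

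There is no real obstacle here. The only point requiring care is the bipartite case, where I will rely on the source/sink dichotomy rather than on the bare cut decomposition, together with the observation that $G_- = G$ when there are no positive edges, so that the split-diagram caveat in the proof of Lemma \ref{lem:cut_cyc} does not arise.
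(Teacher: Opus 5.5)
Your proof is correct. The Eulerian half is the same as the paper's: decompose $E=E_+$ into directed cycles via Lemma \ref{lem:cut_cyc}(i) and use connectivity.

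For the bipartite half you take a different route. Since $E_+=\varnothing$, no contraction occurs, so you apply Lemma \ref{lem:spec_alt}(ii) directly to $G$ and color sources against sinks. The paper instead notes in one line that any graph whose edge set is a disjoint union of cuts is bipartite, and invokes Lemma \ref{lem:cut_cyc}(ii).

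Your version gives an explicit coloring, but it uses non-splitness of $D$ essentially. The paper's version needs only the cut decomposition, whose proof already handles split pieces. So it applies equally to Tait digraphs of split diagrams (allowed in Corollary \ref{cor:closed}), where, as the paper remarks in the proof of Lemma \ref{lem:cut_cyc}, a vertex need not be a source or a sink.

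Your worry that a decomposition into arbitrary cuts does not immediately give a $2$-coloring is unfounded. If $E=\delta(S_1)\sqcup\cdots\sqcup\delta(S_k)$, color $v$ by the parity of $\#\{i : v\in S_i\}$. An edge of $\delta(S_j)$ lies in no other $\delta(S_i)$, so its endpoints differ in membership only for $S_j$ and therefore get different colors. Your alternative even-cycle argument is exactly the justification behind the paper's one-line claim.
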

	
	\begin{proof}
		Any graph whose edges are a disjoint union of directed cuts is bipartite, and any connected graph which is a disjoint union of directed cycles is Eulerian.
	\end{proof}
	
	\begin{cor}
		\label{cor:even_cut}
		Any cycle of a Tait digraph $G$ contains an even number of negative edges.
	\end{cor}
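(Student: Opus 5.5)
The plan is to use Lemma \ref{lem:cut_cyc}(ii): write the negative edge set as a disjoint union of directed cuts,
$$
E_- = B_1 \sqcup \cdots \sqcup B_k,
$$
and combine this with the elementary fact that a cycle meets any cut in an even number of edges.

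First I would fix a cycle $C$ of $G$. Here $G$ is viewed as an undirected graph, so $C$ is a closed walk without repeated vertices, and the edge directions play no role. For each cut $B_j$ there is a vertex partition $V = V_1^{(j)} \sqcup V_2^{(j)}$ such that $B_j$ is exactly the set of edges with one endpoint in each part. As we traverse $C$ and return to the starting vertex, we switch sides of the partition once for each edge of $C \cap B_j$. Since we end on the side where we began, $|C \cap B_j|$ is even.

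One point needs checking: that each $B_j$ really is a cut of $G$ in this partition sense, and not merely a cut of the contracted graph $G_-$. In the proof of Lemma \ref{lem:cut_cyc}, $B_j$ is the set of edges at a sink $v_j$ of some $G_-^i$. Since $G_-^i$ is joined to the rest of $G_-$ only along cut vertices, $B_j$ is the cut of $G_-$ determined by $\{v_j\}$ together with the blocks hanging off $v_j$ on the other side. Pulling this partition back along the contraction map $V(G) \to V(G_-)$ gives a partition of $V(G)$ whose crossing edges are exactly $B_j$. No positive edge crosses it, because each positive edge was contracted to a point.

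Summing over $j$ gives
$$
|C \cap E_-| = \sum_{j=1}^{k} |C \cap B_j|,
$$
which is even. The only step with any subtlety is this pullback of cuts under contraction. Everything else is the standard parity argument, so I expect no serious obstacle.

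As a sanity check, an alternative argument would be: contract $E_+$, so that $C$ maps to a closed walk in $G_-$, which is bipartite by Corollary \ref{cor:bip_eul} applied componentwise. A closed walk in a bipartite graph has even length, and this length is $|C \cap E_-|$. I would present this second argument if the first becomes cumbersome, since it avoids the bookkeeping of individual cuts.
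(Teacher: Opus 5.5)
Your argument is correct and is essentially the paper's proof: write $E_-$ as a disjoint union of directed cuts $B_1 \sqcup \cdots \sqcup B_k$ via Lemma~\ref{lem:cut_cyc}(ii), then use that a cycle meets each cut in an even number of edges. Your check that each $B_j$ pulls back from $G_-$ to a genuine vertex-partition cut of $G$ fills in a step the paper only asserts in the proof of Lemma~\ref{lem:cut_cyc}, and your alternative via contracting $E_+$ into the bipartite graph $G_-$ is equivalent.
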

	
	\begin{proof}
		Write the set of negative edges of $G$ as a disjoint union of directed cuts,
		$$
		E_- = B_1 \sqcup \cdots \sqcup B_k,
		$$
		and let $C$ be an arbitrary cycle of $G$. Then
		$$
		C \cap E_- = (C \cap B_1) \sqcup \cdots \sqcup (C \cap B_k).
		$$
		$C \cap B_i$ is even for each $i$, so $C \cap E_-$ is as well.
	\end{proof}
	
	\subsection{Independence}
	
	We will prove $P_K$ does not depend on the choice of adjacent root and basepoint using the determinant formulation of Section \ref{sec:dimer_det}. To this end, let $G = (V,E)$ be a Tait digraph and let $F$ be its set of faces. Let $\mathcal{H} = (\mathcal{V}_1 \sqcup \mathcal{V}_2, \mathcal{E})$ be the double overlay of $G$, and recall that there are identifications
	\begin{align*}
		\mathcal{V}_1 &\leftrightarrow V \cup F \\
		\mathcal{V}_2 &\leftrightarrow E.
	\end{align*} 
	Let $A$ be an unreduced Kasteleyn matrix for $\mathcal{H}$, as in Definition \ref{def:kast}. Then each column of $A$ corresponds to an element of $\mathcal{V}_1$ and each row to an element of $\mathcal{V}_2$, so we may consider $A$ as a map
	$$
	A : \Z[x^{-1},y^{-1},z,w]^{V \cup F} \to \Z[x^{-1},y^{-1},z,w]^E.
	$$
	We will keep this perspective for the rest of the section.
	
	Our proof relies on a key lemma.
	
	\begin{lemma}
		\label{lem:key}
		With notation as above, let $r \in V$ be a vertex of $G$ and $q \in F$ a face adjacent to $r$. Let $a$ and $b$ be arbitrary elements of $\Z[x^{-1}, y^{-1}, z, w]$. Then there exists a vector
		$$
		\nu \in \Z[x^{-1}, y^{-1}, z, w]^{V \cup F}
		$$
		such that:
		\begin{enumerate}[label=(\roman*)]
			\item $\nu(r) = a$ and $\nu(q) = b$.
			\item Let $u, v \in V$ be two vertices of $G$ which share a common edge $e \in E$, and let $f$ be a face adjacent to $e$ (and therefore to $u$ and $v$). If $e \in E_+$ then
			$$
			\nu(v) = \nu(u).
			$$
			If $e \in E_-$, then
			$$
			\nu(v) = (w - z)\nu(f) - \nu(u).
			$$
			\item $A\nu = {\bf 0}$.
		\end{enumerate}
	\end{lemma}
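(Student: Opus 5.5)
The plan is to read conditions (ii) and (iii) as local rules at each crossing, and to build $\nu$ by propagating the values $a,b$ outward from the pair $(r,q)$ across the diagram. First I make the rows of $A$ explicit. Let $c$ be a crossing with edge $e$ of $G$ running from $u$ to $v$, and let $f,f'$ be the two faces of $G$ at $c$. By Lemma \ref{lem:a_signs} and Figure \ref{fig:dimer_weights}, the row of $A$ indexed by $e$ reads as follows. If $e\in E_+$, it is
$$x^{-1}\nu(v)-y^{-1}\nu(u)+\nu(f)+\nu(f')=0.$$
If $e\in E_-$, with $f$ to the right of $e$ and $f'$ to its left, it is
$$\nu(u)+\nu(v)+z\nu(f)-w\nu(f')=0.$$

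Next I check that (ii) together with (iii) is equivalent to a pair of relations at each crossing. At a positive crossing the pair is $\nu(u)=\nu(v)$ and $\nu(f)+\nu(f')=(y^{-1}-x^{-1})\nu(u)$. At a negative crossing the pair is $\nu(f)=\nu(f')$ and $\nu(u)+\nu(v)=(w-z)\nu(f)$. In the negative case, requiring (ii) for both choices of adjacent face forces $z\nu(f)-w\nu(f')=-(w-z)\nu(f)=-(w-z)\nu(f')$, hence $\nu(f)=\nu(f')$; conversely these two relations give (ii) and the row equation. In every case the two relations are affine and triangular in the four regions $u,f,v,f'$ around $c$. Hence the values on any two regions of $D$ that share an arc of $p(D)$ (necessarily one shaded region, i.e. a vertex of $G$, and one unshaded region, i.e. a face) determine the other two. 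No division is needed, so everything stays in $\Z[x^{-1},y^{-1},z,w]$.

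I then define $\nu$ by transport. Assign to the arc of $p(D)$ separating $r$ and $q$ the pair $(a,b)$; this uses the hypothesis that $r$ and $q$ are adjacent, and it gives (i). Push the assignment across crossings and along region boundaries using the rules above, to every arc of $p(D)$. Each region of $D$ receives a single value provided the transport is path-independent. The relevant 2-complex is the sphere, with 0-cells the crossings and 2-cells the regions of $D$. Loops around a single crossing are consistent automatically, since the rules there are the two relations themselves. It therefore suffices, by simple connectivity of $S^2$, to check monodromy around each region.

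Around a vertex $u$ of $G$, the value $\nu(u)$ is fixed. Crossing a negative edge leaves the face value unchanged. Crossing a positive edge applies the involution $\phi\mapsto -\phi+(y^{-1}-x^{-1})\nu(u)$. So the monodromy is trivial exactly when $u$ meets an even number of positive edges, which holds because $E_+$ is a disjoint union of directed cycles by Lemma \ref{lem:cut_cyc}(i); a loop at $u$ contributes two incidences, and the parity count still works. Dually, around a face $f$ the value $\nu(f)$ is fixed, and negative edges apply $\mu\mapsto(w-z)\nu(f)-\mu$ to the vertex value while positive edges act trivially. Here we need the boundary walk of each face to contain an even number of negative edges. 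This follows from the argument of Corollary \ref{cor:even_cut} applied to the closed boundary walk: a closed walk crosses each directed cut $B_i$ an even number of times, with bridges traversed twice.

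The main obstacle I expect is making this monodromy argument rigorous in degenerate configurations: loops, bridges, and regions touching a crossing twice (nugatory crossings). In those cases the ``cycle around a region'' is a closed walk rather than a cycle, and a single crossing can contribute two transition steps. I would handle this by phrasing the transport on the medial graph, with states on arcs and steps at corners. Every closed walk in the sphere is then a product of region boundaries and crossing links, and the parity counts above are computed with multiplicity. Once $\nu$ is well defined, (ii) and (iii) hold by construction, since every row equation is one of the imposed relations.
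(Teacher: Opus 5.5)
Your proof is correct, and it takes a genuinely different route from the paper's.

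\textbf{The paper's route.} The paper argues by induction on $|E_-|$.
\begin{enumerate}
\item When $E_-=\varnothing$, $G$ is Eulerian, so $G^*$ is bipartite. The paper then writes $\nu$ down explicitly: $a$ on every vertex, and $b$ or $(y^{-1}-x^{-1})a-b$ on the two classes of faces.
\item In the inductive step it deletes one directed cut $B\subset E_-$ supplied by Lemma \ref{lem:cut_cyc}. It applies the hypothesis to the two resulting Tait digraphs, using the annular face in place of $q$ when needed, and glues.
\item It checks (ii) across $B$ by applying Corollary \ref{cor:even_cut} to the cycle formed by two edges of $B$ and connecting paths in each piece.
\end{enumerate}

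\textbf{Your route.} You turn (ii) and (iii) into a local system of affine transition maps on the arcs of $p(D)$. You then use simple connectivity of $S^2$ to reduce global consistency to trivial holonomy around each region. Your two parity checks are the local forms of the paper's two ingredients:
\begin{itemize}
\item an even number of positive edges at each vertex is exactly what makes $G^*$ bipartite in the base case;
\item an even number of negative edges on each face boundary walk is the face-boundary case of Corollary \ref{cor:even_cut}, which the paper applies instead to cycles through $B$.
\end{itemize}

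\textbf{What each buys.} Your argument needs no induction. It avoids the gluing bookkeeping: choosing the cut, tracking the components of $G\setminus B$, and following where $q$ lands. It also gives uniqueness of $\nu$ for free, since you showed (ii) and (iii) force the local relations at every crossing. The paper's argument stays purely combinatorial and never needs the holonomy formalism.

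\textbf{The degenerate cases.} The price of your approach is the degenerate configurations you flag, and your proposed fix works. Consider the complex with arcs as $0$-cells, corners as $1$-cells, and both crossings and regions as $2$-cells. It is obtained from the cell structure of $S^2$ given by $p(D)$ by truncating each crossing to a small square and then collapsing each arc to a point. The collapsed arc edges form a matching, so this is a homotopy equivalence and the complex is simply connected. Kinks and bridges only produce loops at a node, and these are covered by the monogon-region holonomy you already check. Finally, Convention \ref{conv:split} guarantees that $p(D)$ is connected and every region is a disk, which your reduction implicitly uses.
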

	
	We use the notation $\nu(v)$ to indicate the $v$-coordinate of $\nu$ in $\Z[x^{-1}, y^{-1}, z, w]$, for any $v \in V \cup F$. The vector $\nu$ is also unique, but we won't need this fact here.
	
	\begin{proof}
		The proof is by induction on $|E_-|$, the number of negative edges of $G$. If $|E_-| = 0$, then $G$ is Eulerian by Corollary \ref{cor:bip_eul} and its planar dual $G^*$ is bipartite. We can thus partition the faces of $G$ into two sets
		$$
		F = F' \sqcup F'',
		$$
		such that $q \in F'$ and no two faces in the same set abut the same edge of $G$. Let
		$$
		\nu \in \Z[x^{-1}, y^{-1}, z, w]^{V \cup F}
		$$
		be the vector
		$$
		\nu(p) = \begin{cases}
			a & p \in V \\
			b & p \in F' \\
			(y^{-1} - x^{-1})a - b & p \in F''
		\end{cases}.
		$$
		Clearly $\nu$ satisfies properties (i) and (ii) above, and we claim $\nu$ also satisfies property (iii). For this, let $e \in E$ be an arbitrary edge of $G$. Then since $e \in E_+$, one can check using Figure \ref{fig:dimer_weights} and Lemma \ref{lem:a_signs} that
		$$
		\big(A\nu\big)(e) = x^{-1}a - y^{-1}a + b + (y^{-1} - x^{-1})a -b = 0,
		$$
		where $\big(A\nu\big)(e)$ indicates the $e$-coordinate of $A\nu$. This completes the base case.
		
		Now suppose $|E_-| > 0$. Then $E_-$ can be written as a disjoint union of directed cuts by Lemma \ref{lem:cut_cyc}, and we fix one such cut $B \subset E_-$. Let $G'$ be the graph obtained from $G$ by deleting the edges of $B$. Then $G'$ has two connected components, which we label $G_1 = (V_1, E_1)$ and $G_2 = (V_2, E_2)$ so that $r \in V_1$. Each of these is a Tait digraph by Lemma \ref{cor:closed}, and an edge is negative in $E_1$ or $E_2$ if and only if it is negative in $E$. Let $F_1, F_2 \subset F$ be the bounded faces of $G_1$ and $G_2$ respectively, and $f$ the unique annular face of $G'$ which abuts both components. Let $F_B \subset F$ be the faces of $G$ corresponding to $f \setminus (\bigcup B)$, so that
		$$
		F = F_1 \cup F_2 \cup F_B.
		$$
		By the induction hypothesis, there exists a vector
		$$
		\nu_1 \in \Z[x^{-1}, y^{-1}, z, w]^{V_1 \cup F_1 \cup f}
		$$
		satisfying (i)--(iii) for $G_1$. (If $q \in F_B$, in which case $q$ is not a face of $G_1$, then we define $\nu_1$ by requiring that $\nu_1(f) = b$ instead of $\nu_1(q) = b$.)
		
		Let $e$ be an edge in the cut $B$ with endpoints $u_1$ and $u_2$, so that $u_i \in V_i$. Then applying the induction hypothesis again, we construct a vector
		$$
		\nu_2 \in \Z[x^{-1}, y^{-1}, z, w]^{V_2 \cup F_2 \cup f}
		$$
		such that
		\begin{align*}
			\nu_2(u_2) &= (w - z)\nu_1(f) - \nu_1(u_1), \\
			\nu_2(f) &= \nu_1(f),
		\end{align*}
		and $\nu_2$ satisfies properties (ii) and (iii) for $G_2$. Finally, define a vector
		$$
		\nu \in \Z[x^{-1}, y^{-1}, z, w]^{V \cup F}
		$$
		by
		$$
		\nu(p) = \begin{cases}
			\nu_1(p) & p \in V_1 \cup F_1 \\
			\nu_1(f) \big(\text{$=$ } \nu_2(f)\big)& p \in F_B \\
			\nu_2(p) & p \in V_2 \cup F_2
		\end{cases}.
		$$
		We claim $\nu$ satisfies properties (i)---(iii). It's clear that $\nu$ satisfies (i), and that $\nu$ satisfies (ii) for any two vertices which are both contained either in $V_1$ or $V_2$. Fix an edge $e' \in B$, and let $w_1 \in V_1$ and $w_2 \in V_2$ be its endpoints; we must check that (ii) is satisfied for $w_1$ and $w_2$.
		
		The edges $e, e' \in B$ divide the annular region $f$ into two disk components, and we call one of these disks $f'$. Let $C \subset E$ be the cycle of $G$ on the boundary of $f'$, so that 
		$$
		C = \{e,e'\} \cup C_1 \cup C_2,
		$$
		where $C_i = C \cap E_i$ is a path of edges connecting $u_i$ and $w_i$ in $G_i$, for $i = 1, 2$. All of the vertices in $C_1$ are adjacent to $f$ in $G_1$; thus, it follows from property (ii) of $\nu_1$ that if $C_1$ contains only positive edges, then
		$$
		\nu(w_1) = \nu_1(w_1) = \nu_1(u_1)  = \nu(u_1).
		$$
		Indeed, if $C_1$ contains only positive edges, then $\nu$ takes the same value on every vertex in the path. More generally, property (ii) implies that if $C_1$ contains an even number of negative edges then
		$$
		\nu(w_1) = \nu(u_1),
		$$
		while if $C_1$ contains an odd number of negative edges then
		$$
		\nu(w_1) = (w - z)\nu_1(f) - \nu(u_1).
		$$
		The analogous statement is true for $C_2$: if $C_2$ contains an even number of negative edges then
		$$
		\nu(w_2) = \nu(u_2) = (w - z)\nu_1(f) - \nu(u_1),
		$$
		and if $C_2$ contains an odd number of negative edges then
		$$
		\nu(w_2) = (w - z)\nu_1(f) - \nu(u_2) = \nu(u_1).
		$$
		The cycle $C$ contains an even number of negative edges by Corollary \ref{cor:even_cut}, and $e$ and $e'$ are both negative edges, so the numbers of negative edges in $C_1$ and $C_2$ are either both even or both odd. In either case, the four equations above show that
		$$
		\nu(w_2) = (w - z)\nu_1(f) - \nu(w_1).
		$$
		If $f''$ is any face of $G$ bordering $e'$ then $\nu(f'') = \nu_1(f)$ by definition, and consequently property (ii) is satisfied for $w_1$ and $w_2$. Since $e' \in B$ was arbitrary, (ii) holds for $\nu$.
		
		It remains to check (iii). As in the base case this can be done by checking that the $e$-coordinate of $A\nu$ is $0$ for each edge $e \in E$, and as in the previous step this is clear for any edge $e \in E_1 \cup E_2$. If $e \in B$, then the preceding discussion and the definition of $\nu$ show that
		$$
		\big(A\nu\big)(e) = \nu(u_1) + \big((w - z)\nu_1(f) - \nu(u_1)\big) + z\nu_1(f) - w\nu_1(f) = 0,
		$$
		so (iii) holds for $\nu$.
	\end{proof}
	
	We now prove the main theorem of the section.
	
	\begin{thm}
		\label{thm:root_indep}
		Let $G = (V,E)$ be the Tait digraph of an alternating link diagram $D$. Then the polynomial $P_G$ of Definition \ref{def:poly} does not depend on the choice of adjacent root/basepoint pair.
	\end{thm}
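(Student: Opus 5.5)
The proof will go through the determinant formulation of Proposition \ref{prop:p_det}, which gives $P_{(G,r,q)} = |\det(A^{red}_{(r,q)})|$. Here $A$ is the $k \times (k+2)$ unreduced Kasteleyn matrix and $A^{red}_{(r,q)}$ is $A$ with the columns indexed by $r$ and $q$ removed. The task is therefore to show that $|\det A^{red}_{(r,q)}|$ is the same for every adjacent pair $(r,q)$.

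The standard way to compare maximal minors is a kernel vector. Suppose $\nu$ satisfies $A\nu = 0$ and has nonzero entries $\nu(s)$ in some column $s$. Then column $s$ of $A$ is a linear combination of the remaining columns, with coefficients $-\nu(p)/\nu(s)$. By Cramer's rule, for two column-deleted minors differing in one column we get
\[
\nu(s)\,\det A^{red}_{(s,q)} = \pm\, \nu(s')\,\det A^{red}_{(s',q)}.
\]
The signs come from column reordering. The same identity holds with the roles of vertex and face columns exchanged.

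Lemma \ref{lem:key} supplies these kernel vectors with controlled entries. The plan is as follows.

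\begin{enumerate}
\item Move the root along an edge while fixing the face. Take adjacent pairs $(r,q)$ and $(r',q)$ where $r,r'$ are endpoints of an edge $e$ bordering $q$. Apply Lemma \ref{lem:key} at the pair $(r,q)$ with $a = 1$ and $b = 0$. Then $\nu(q) = 0$. By (ii), if $e \in E_+$ then $\nu(r') = \nu(r) = 1$, and if $e \in E_-$ then $\nu(r') = (w-z)\cdot 0 - 1 = -1$.
\item Restrict $A\nu = 0$ to the columns other than $q$, which is possible since $\nu(q)=0$. Expanding the resulting dependence among the columns of $A$ with column $q$ deleted gives
\[
\det A^{red}_{(r,q)} = \pm \det A^{red}_{(r',q)}.
\]
To make this precise, form the $k \times k$ matrix obtained by deleting columns $q$ and $r'$. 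Replace column $r$ by $\sum_p \nu(p)\,\mathrm{col}_p$. This new column equals $-\nu(r')\,\mathrm{col}_{r'}$, and $\nu(r) = 1$, so the determinants agree up to sign.
\item Move the face while fixing the root. For $q,q'$ faces bordering a common edge incident to $r$, take $a = 0$ and $b = 1$. Then $\nu(r) = 0$, and the determinant relation requires $\nu(q') = \pm 1$. This step needs care. With $a=0$, condition (ii) makes every vertex value zero in the positive-only case. The base-case formula then gives face values $b$ on $F'$ and $-b$ on $F''$, so $\nu(q') = \pm 1$ and the argument works there. In general, however, (ii) does not pin down face values directly.
\item If step 3 fails in general, use the mirror symmetry from the proof of Proposition \ref{prop:companions}. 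The mirror diagram has Tait digraph equal to $G^*$ with reversed orientations on some edges, its root and face roles are swapped, and $P$ transforms by an explicit variable substitution. Face-moves for $G$ then become root-moves for the mirror, which step 1 handles. The substitution is a fixed ring automorphism, so it preserves equality of polynomials.
\item Conclude by connectivity. Any two adjacent pairs are linked by a sequence of these moves: alternately rotate the root around a face, and move the face around a vertex. This chain connects all vertex–face incidences because the incidence graph of vertex–face flags of a connected plane graph is connected.
\end{enumerate}

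The expected main obstacle is the bookkeeping in step 2. One must confirm that the dependence coming from $\nu$ involves the columns $r$ and $r'$ with unit coefficients $\pm 1$. Other columns may carry non-unit entries such as $w-z$, but these drop out of the determinant because they multiply columns still present in the minor. The step-3 issue with face values is the secondary difficulty; invoking the mirror in step 4 is the fallback I would rely on.
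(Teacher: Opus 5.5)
Your proposal is correct and is essentially the paper's proof. The paper likewise applies Lemma~\ref{lem:key} with $\nu(r)=1$, $\nu(q)=0$ to get $\nu(r')=\pm 1$, and compares the two maximal minors; it does this by appending rows $\xi_1^\top,\xi_2^\top$ to $A$ to get a singular square matrix $\bar{A}$, which is equivalent to your column operation. It then obtains face moves from root moves on the mirror diagram via Proposition~\ref{prop:companions}, and finishes by connectivity and nonnegativity of coefficients.

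Your step 3 also works directly, without the mirror. Take $\nu(r)=0$ and $\nu(q)=1$. If $e\in E_+$, the $e$-row of $A\nu=0$ gives $\nu(q')=-1$. If $e\in E_-$, applying property (ii) with both faces $q$ and $q'$ gives $(w-z)(\nu(q')-1)=0$, hence $\nu(q')=1$.
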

	
	\begin{proof}
		Fix an edge $e \in E$, and let $r, r' \in V$ be its vertices. Let $q$ be a basepoint in a face bordered by $e$, and let $q'$ be a basepoint in the face on $e$'s other side. It suffices to show that
		\begin{equation}
			\label{eq:root_shift}
			P_{(G, r, q)} = P_{(G, r', q)}
		\end{equation}
		and 
		\begin{equation}
			\label{eq:face_shift}
			P_{(G, r, q)} = P_{(G, r, q')};
		\end{equation}
		indeed, since $G$ is connected, any adjacent root/basepoint pair can be changed into any other pair through a sequence of such moves. In fact it suffices to show (\ref{eq:root_shift}); equation (\ref{eq:face_shift}) can be obtained from (\ref{eq:root_shift}) by considering the mirror $m(D)$ of $D$ and applying Proposition \ref{prop:companions}.
		
		Let $F$ be the faces of $G$. Conflating notation, we let $q$ denote the face of $G$ containing the basepoint $q$. Let $A$ be an unreduced Kastelyn matrix for $G$, and let
		$$
		\nu \in \big(\Z[x^{-1}, y^{-1}, z, w]\big)^{V \cup F}
		$$
		be a vector satisfying properties (i)--(iii) of Lemma \ref{lem:key} with the coordinates $\nu(r) = 1$ and $\nu(f) = 0$ for (i). By property (ii) we also have
		$$
		\nu(r') = \pm 1l
		$$
		Specifically, $\nu(r') = 1$ if $e$ is a positive edge and
		$$
		\nu(r') = (w - z)\nu(f) - \nu(r) = -1
		$$ 
		if $e$ is a negative edge.
		
		Let
		$$
		\xi_1, \xi_2 \in \big(\Z[x^{-1}, y^{-1}, z, w]\big)^{V \cup F}
		$$
		be two vectors defined as follows: let $\xi_1$ be the vector with $\xi_1(q) = 1$ and $\xi_1(p) = 0$ for all $p \in V \cup F$ with $p \neq q$. Let $\xi_2$ be the vector with $\xi_2(r) = 1$,
		$$
		\xi_2(r') = -\nu(r') = \pm 1,
		$$
		and $\xi_2(p) = 0$ for all $p \in V \cup F$ with $p \notin \{r, r'\}$. Then by construction
		\begin{equation}
			\label{eq:zero}
			\xi_1^\top \cdot \nu = \xi_2^\top \cdot \nu = {\bf 0},
		\end{equation}
		where $\cdot$ is the dot product defined using $V \cup F$ as an orthonormal basis. Finally, let $\bar{A}$ be the matrix $A$ with $\xi_2^\top$ and $\xi_1^\top$ appended to the bottom as two additional rows. Then $\bar{A}$ is a square matrix, and since $\nu$ satisfies (\ref{eq:zero}) and $A\nu = {\bf 0}$ we have
		$$
		\bar{A}\nu = {\bf 0}.
		$$
		It follows that $\det(\bar{A}) = 0$, and a direct calculation using $\xi_1$ and $\xi_2$ shows that
		$$
		0 = \det(\bar{A}) = \det(A^{red}_{(r, q)}) \pm \det(A^{red}_{(r', q)}).
		$$
		From Proposition \ref{prop:p_det} we conclude that $P_{(G, r, q)} = \pm P_{(G, r', q)}$, and in fact the two polynomials are equal since both have non-negative coefficients.
	\end{proof}
	
	\begin{rmk}
		Our proof of Theorem \ref{thm:root_indep} is inspired by Milnor's and Turaev's conception of the Alexander polynomial as the torsion of a chain complex \cite{mil62, tur01}. The theorem itself can also be thought of as a variation and quasi-generalization of the fact that the number of spanning arborescences of a rooted Eulerian digraph does not depend on the choice of root \cite{eb51, tusm41}---this fact is discussed in relation to the Alexander polynomial in \cite{must03, hmv25}. The author hopes this context will be helpful to readers who feel the proof of Theorem \ref{thm:root_indep} is somewhat opaque, since the author feels this way as well. It would be nice to have a topological proof.
	\end{rmk}
	
	\begin{rmk}
		Reading Lemma \ref{lem:cut_cyc} and the proof of Lemma \ref{lem:key}, it is tempting to try to generalize the polynomial $P_G$ in the following way: let $G$ be any plane digraph whose edges can be written as a disjoint union of directed cycles and directed cuts. Then we can define $P_G$ as in Definition \ref{def:poly} by saying an edge is ``positive'' if it is contained in a directed cycle, and ``negative'' if it lies in a directed cut. This extension may well satisfy a root/basepoint independence property, but our proof of Theorem \ref{thm:root_indep} does not work for this class of graphs. The issue is Kauffman's sign function is no longer defined: our proof of Lemma \ref{lem:key} relies on Lemma \ref{lem:a_signs}, which does not seem to generalize beyond Tait digraphs.
	\end{rmk}
	
	\section{Diagram Independence}
	\label{sec:link_invar}
	
	\subsection{Extending $P$ to non-Tait digraphs}

	In this section we complete the proof of Theorem \ref{thm:invariant} by showing $P$ does not depend on the choice of alternating diagram of a link. To do this, it is necessary to extend $P$ to plane digraphs which may not be the Tait digraph of any alternating link diagram.
	
	\begin{defn}
		\label{def:non_tait}
		Let $G = (V,E)$ be a plane digraph with distinguished root $r \in V$ and basepoint $q \in \R^2 \setminus G$, which need not be adjacent to $r$. Suppose $G$ is equipped with a partition of its edges into classes $E_+$ and $E_-$. Then we define a polynomial $P_{(G,r,q)} \in \Z[x^{-1},y^{-1},z,w]$ as in Definition \ref{def:poly}, by
		$$
			P_{(G,r,q)}(x, y, z, w) = \sum_{T \in \mathcal{T}} x^{-\iota_+(T)} y^{-\overline{\iota}_+(T)} z^{\varepsilon_-(T)} w^{\overline{\varepsilon}_-(T)}.
		$$
		The terms $\iota_+(T)$, $\overline{\iota}_+(T)$, $\varepsilon_-(T)$ and $\overline{\varepsilon}_-(T)$ are as in Definition \ref{def:plus_minus_acts}, where edges in $E_+$ and $E_-$ are thought of as positive and negative edges respectively. We call the partition $E = E_+ \sqcup E_-$ a {\em sign partition} of $E$.
	\end{defn}
	
	Since $P_{(G,r,q)}$ depends on the choice of $r$ and $q$ for general plane digraphs, it is important to include them in our notation here. Additionally, if $G = (E,V)$ is a graph equipped with a sign partition, and $G' = (E', V')$ is a {\em minor} of $G$---that is, a graph constructed by deleting some edges of $G$ and contracting others---then $E'$ is naturally a subset of $E$. In this case, we give $E'$ the sign partition $E'_\pm = E' \cap E_\pm$.

	\subsection{Diagram independence}
	
	A crossing $c$ in a link diagram $D$ is {\em nugatory} if there exists a simple closed curve $\gamma \subset \R^2$ which intersects $D$ only at $c$. Such crossings can be removed by rotating one component of $D \setminus \gamma$ 180 degrees out of the plane, untwisting the crossing, and a diagram is {\em reduced} if it does not contain any nugatory crossings.
	
	Menasco and Thistlethwaite proved any two reduced alternating diagrams of a link $K$ are related by a sequence of {\em flypes} \cite{meth91}. Thus, to show $P$ does not depend on which alternating diagram we choose, we'll show it is it is unchanged by
	\begin{itemize}
		\item Adding or removing nugatory crossings, and 
		\item Flyping.
	\end{itemize} 
	A flype is defined by first decomposing an alternating diagram into three pieces: two four-ended tangles $T_1$ and $T_2$, and a single crossing $c$ which joins $T_1$ at two of its ends and $T_2$ at the other two. We can then ``flype'' the diagram by rotating $T_1$ 180 degrees out of the plane and moving $c$ to the other side of $T_1$, so that the isotopy class of the link is unchanged. This operation is depicted in Figure \ref{fig:flype}.
	
	\begin{figure}
		\centering
		
		\begin{tikzpicture}
			
			\draw[ultra thick, red!30] (-4.4,0.65) to[out=135,in=-90] (-4.75,1) to[out=90,in=90] (-1.25,1) -- (-1.25,-1) to[out=-90,in=-90] (-4.75,-1) -- (-4.75,0) to[out=90,in=-90] (-3.75,1) to[out=90,in=90] (-2.25,1) -- (-2.25,-1) to[out=-90,in=-90] (-3.75,-1) -- (-3.75,0) to[out=90,in=-45] (-4.1,0.35); 
			
			\draw[thick, black,fill=yellow!20] (-5,0) rectangle (-3.5,-1) node[pos=.5] {$T_1$};
			\draw[thick, black,fill=gray!30] (-1,1) rectangle (-2.5,-1) node[pos=.5] {$T_2$};
			\node at (-4.75, 0.5) {$c$};
			
			\draw[ultra thick, red!30] (1.6,-0.35) to[out=135,in=-90] (1.25,0) -- (1.25,1) to[out=90,in=90] (4.75,1) -- (4.75,-1) to[out=-90,in=-90] (1.25,-1) to[out=90,in=-90] (2.25,0) -- (2.25,1) to[out=90,in=90] (3.75,1) -- (3.75,-1) to[out=-90,in=-90] (2.25,-1) to[out=90,in=-45] (1.9,-0.65); 
			
			\draw[thick, black,fill=yellow!20] (1,1) rectangle (2.5,0) node[pos=.5] {\reflectbox{$T_1$}};
			\draw[thick, black,fill=gray!30] (5,1) rectangle (3.5,-1) node[pos=.5] {$T_2$};
			\node at (1.25, -0.5) {$c$};
			
			\draw[->, thick] (-0.5,0) -- (0.5,0);
			
		\end{tikzpicture}
		
		\caption{A flype}
		\label{fig:flype}
	\end{figure}
	
	As a first step toward our goal, we examine how flipping over a link diagram affects its Tait digraph.
	
	\begin{lemma}
		\label{lem:diagram_flip}
		Let $D$ be an alternating diagram of a link $K$, with Tait digraph $G$. Let $\rho(D)$ be the diagram resulting from rotating $K$ 180 degrees around an axis $\ell$ lying in the projection plane, and let $\rho(G)$ be the Tait digraph of $\rho(D)$. Then $\rho(G)$ is equal to the graph given by reflecting $G$ across $\ell$ and reversing the directions of all negative edges. Furthermore, the sign of an edge of $G$ matches the sign of its reflected image in $\rho(G)$.
	\end{lemma}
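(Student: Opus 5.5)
The plan is to verify everything locally, one crossing at a time. The rotation $\rho$ about the axis $\ell$ acts on the projection plane as the reflection $R$ across $\ell$. It also swaps the over- and under-strands at every crossing, because the vertical direction is negated.

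\textbf{Step 1: shading and the underlying plane graph.} The regions of $\rho(D)$ are the images under $R$ of the regions of $D$. We shade $R$ of each shaded region of $D$ and check that this is the type I shading demanded by Convention \ref{conv:type}. Reflecting a type I crossing of Figure \ref{fig:I} across a line produces a type II picture. Swapping over and under then converts it back to type I. So $R$ of the shading of $D$ is the type I shading of $\rho(D)$. It follows that, as an undirected plane graph, $\rho(G) = R(G)$, with edges matched crossing by crossing.

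\textbf{Step 2: crossing signs.} Rotation is an orientation-preserving isotopy of $S^3$ carrying the oriented link to itself, so the sign of each crossing is unchanged. Locally this is the familiar fact that a mirror reflection of the plane combined with a crossing switch preserves sign. This gives the final sentence of the lemma.

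\textbf{Step 3: edge directions, the main step.} By Definition \ref{def:ditait}, the Tait edge at $c$ points along the overstrand of $c$. In $\rho(D)$, the overstrand at $R(c)$ is the image under $R$ of the \emph{understrand} at $c$, with its orientation carried along. So the direction of $\rho(G)$ at $R(c)$ is $R$ applied to the direction the understrand at $c$ induces on the Tait edge. The question is therefore whether, at a type I crossing, the overstrand and understrand induce the same or opposite directions on the Tait edge. I would read this off Figure \ref{fig:sign_comparison} directly.

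In the positive case the Tait edge runs vertically, from one shaded region to the other. The overstrand (direction $(1,1)$) and the understrand (direction $(-1,1)$) both have positive vertical component. They induce the same direction, so the edge of $\rho(G)$ is exactly $R(e)$.

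In the negative case the Tait edge runs horizontally. The overstrand has direction $(-1,1)$ and the understrand $(1,1)$, whose horizontal components are opposite. So the edge of $\rho(G)$ is $R(e)$ reversed.

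This is precisely the claim: reflect $G$, and reverse the negative edges. The only care needed is to confirm that the figure's edge directions match Definition \ref{def:ditait}, with positive edges vertical and negative edges horizontal in Figure \ref{fig:sign_comparison}. By Lemma \ref{lem:alt_crossing_signs}, every crossing is type I, so these two local pictures cover all cases.
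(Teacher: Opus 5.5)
Your proposal is correct and follows essentially the same route as the paper. Both arguments realize $\rho(D)$ as a reflection of the plane followed by a crossing switch, observe that this keeps the type I shading and every crossing sign, and conclude from the fact that the understrand points toward the head of the Tait edge at a positive crossing and toward the tail at a negative one. One small caveat on your Step 2: the argument really rests on the local fact that reflection and crossing switch each reverse the sign, not on the isotopy remark, since individual crossing signs are not preserved by isotopy in general.
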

	
	\begin{proof}
		The diagram $\rho(D)$ can be obtained by reflecting $D$ across $\ell$ and then taking its mirror. Since the operations of reflecting and mirroring both change the signs of crossings, any crossing of $D$ has the same sign as its reflected-and-mirrored image in $\rho(D)$. Similarly, reflecting the type I checkerboard shading of $D$ gives the type I checkerboard shading of $\rho(D)$. It follows that $\rho(G)$ coincides with the reflection of $G$ across $\ell$ as an {\em undirected} graph, and that the reflected image of a positive (resp.~negative) edge of $G$ is a positive (resp.~negative) edge of $\rho(G)$.
		
		To finish the proof, as in Proposition \ref{prop:companions}, let $e$ be an edge of $G$ through a crossing $c$ of $D$, let $e'$ be the oriented reflection of $e$ across $\ell$, and let $c'$ be the reflected-and-mirrored image of $c$ in $\rho(D)$. If $c$ is a positive crossing then its undercrossing strand points toward the head of $e$, so the overcrossing strand of $c'$ points toward the head of $e'$. Thus, in this case, the direction $e'$ inherits as the reflection of $e$ agrees with its orientation as an edge of $\rho(G)$. On the other hand, if $c$ is a negative crossing then its undercrossing strand points toward the tail of $e$. Thus the overcrossing strand of $c'$ points toward the tail of $e'$, so the direction of $e'$ disagrees with its direction in $\rho(G)$ in this case.
	\end{proof}
	
	Although we've stated Lemma \ref{lem:diagram_flip} for entire link diagrams, an analogous result is true if we consider flipping {\em part} of a link diagram as in the case of a flype. Lemma \ref{lem:diagram_flip} motivates the following definition.
	
	\begin{defn}
		\label{def:rho_g}
		Let $G = (V,E)$ be any plane digraph with sign partition $E = E_+ \sqcup E_-$ as in Definition \ref{def:non_tait}, and let $\ell \subset \R^2$ be a line. We define $\rho(G)$ to be the plane digraph given by reflecting $G$ across $\ell$, with sign partition inherited from $G$, and then reversing the directions of all negative edges.
	\end{defn}
	
	For any signed plane digraph, the operation $\rho$ preserves the polynomial $P$.
	
	\begin{lemma}
		\label{lem:graph_flip}
		Let $G = (V,E)$ be a plane digraph with sign partition $E = E_+ \sqcup E_-$, and let $\rho(G)$ be as in Definition \ref{def:rho_g} for some line $\ell$. Fix a root $r \in V$ and basepoint $q \in \R^2 \setminus G$, and let $\rho(r)$ and $\rho(q)$ be the respective reflections of $r$ and $q$ across $\ell$. Then
		$$
		P_{(G,r,q)} = P_{(\rho(G), \rho(r), \rho(q))}.
		$$
	\end{lemma}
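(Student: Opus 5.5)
Reflection across $\ell$ identifies vertices, edges and spanning trees of $G$ with those of $\rho(G)$, so I will compare the weight of each spanning tree $T$ of $G$ with the weight of its reflected image $\rho(T)$. The goal is to show that $\iota_+$, $\overline{\iota}_+$, $\varepsilon_-$ and $\overline{\varepsilon}_-$ are each preserved by this bijection. The polynomial identity then follows term by term from Definition \ref{def:non_tait}.

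\emph{Positive edges.} Take a positive edge $e \in T$. Reflection is a graph isomorphism on the underlying undirected graphs. It therefore carries the fundamental cut $\text{Cut}_T(e)$ to $\text{Cut}_{\rho(T)}(\rho(e))$, and the component containing $r$ to the component containing $\rho(r)$. Positive edges keep their directions as abstract directed edges, since reflection only changes the embedding. Hence $e$ points away from the root component in $T$ if and only if $\rho(e)$ does in $\rho(T)$. So $\iota_+$ and $\overline{\iota}_+$ are preserved. This step is purely combinatorial: internal activity does not see the embedding at all.

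\emph{Negative edges.} Take a negative edge $e \notin T$ with fundamental cycle $C = \text{Cyc}_T(e)$. The reflected cycle $\rho(C)$ is $\text{Cyc}_{\rho(T)}(\rho(e))$. Reflection reverses orientation of the plane, and of the one-point compactification $\R^2 \cup \{\infty\}$. A cyclic orientation of $C$ that runs counter-clockwise around $q$ therefore maps to an orientation of $\rho(C)$ that runs clockwise around $\rho(q)$. So after reflection alone, the direction of the image of $e$ agrees with the counter-clockwise orientation around $\rho(q)$ exactly when $e$ disagreed with the counter-clockwise orientation around $q$; that is, active and passive would swap.

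In $\rho(G)$, however, the direction of every negative edge is also reversed, and this reversal swaps active and passive a second time. The two swaps cancel, so $e$ is externally semi-active for $(G,T,q)$ if and only if $\rho(e)$ is externally semi-active for $(\rho(G),\rho(T),\rho(q))$. Hence $\varepsilon_-$ and $\overline{\varepsilon}_-$ are preserved.

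The only delicate point is justifying that reflection reverses ``counter-clockwise around $q$'' in the sense of Definition \ref{def:ext}, including when $q$ lies in the unbounded face or inside $C$. I would handle this by noting that reflection is an orientation-reversing homeomorphism of $S^2$ fixing neither notion. A closed curve's winding orientation around $q$, viewed as a boundary orientation of the disk in $S^2 \setminus C$ not containing $q$, is carried to the opposite boundary orientation of the corresponding disk for $\rho(q)$.
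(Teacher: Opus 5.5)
Your proposal is correct and is essentially the paper's proof: the same tree-by-tree, edge-by-edge comparison, where internal semi-activity of positive edges does not depend on the embedding, and for negative edges the orientation reversal from reflecting the fundamental cycle cancels the reversal of negative edge directions built into $\rho(G)$. One harmless slip in your last paragraph: ``counter-clockwise around $q$'' is the boundary orientation of the disk of $S^2 \setminus C$ \emph{containing} $q$, not the one avoiding $q$ (this is why it is clockwise in the plane when $q$ lies in the unbounded face); since an orientation-reversing homeomorphism reverses induced boundary orientations under either description, your conclusion still stands.
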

	
	\begin{proof}
		Fix a spanning tree $T$ of $G$, and let $T'$ be its image under the operation $\rho$. Additionally, fix an edge $e \in E$ and let $e'$ be the corresponding edge of $\rho(G)$. Then it suffices to show that $e$'s contribution to the weight of $T$ in $P_{(G,r,q)}$ matches the contribution of $e'$ to the weight of $T'$ in $P_{(\rho(G),\rho(r),\rho(q))}$.
		
		First we consider the possibility that $e \in E_+ \cap T$, so that $e' \in E(\rho(G))_+ \cap T'$. Then the direction of $e' \in \rho(G')$ agrees with the direction of $e$-reflected-across-$\ell$, and it's clear that $e$ points away from $r$ in $T$ if and only if $e'$ points away from $\rho(r)$ in $T'$. (Indeed, this data does not depend on the embedding $G \hookrightarrow \R^2$.) Thus $e$ is internally semi-active with respect to $T$ if and only if $e'$ is internally semi-active with respect to $T'$.
		
		Next suppose $e \in E_- \setminus T$. Let $C$ be the fundamental cycle of $e$ with respect to $T$, oriented so that it runs counter-clockwise around $q$ in $\R^2 \cup \infty$; then $e$ is externally semi-active with respect to $T$ if and only if its direction agrees with that of $C$. Let $C'$ be the reflection of $C$ across $\ell$---since reflecting reverses orientation, $C'$ runs clockwise around $\rho(r)$ and $e'$ is externally semi-active with respect to $T'$ if and only if its direction disagrees with the orientation of $C'$. By the definition of $\rho(G)$ the direction of $e'$ also disagrees with the direction of $e$-reflected-across-$\ell$, and these two reversals cancel out: we conclude that $e$ is externally semi-active with respect to $T$ if and only if $e'$ is externally semi-active with respect to $T'$.
	\end{proof}
	
	Combining Lemmas \ref{lem:diagram_flip} and \ref{lem:graph_flip} with Theorem \ref{thm:root_indep}, we obtain:
	
	\begin{cor}
		\label{cor:diag_flip}
		Let $D$ be an alternating link diagram, and $\rho(D)$ the result of rotating $D$ 180 degrees around a line in the projection plane. Then $P_D = P_{\rho(D)}$.
	\end{cor}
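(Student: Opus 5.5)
The statement follows by chaining three earlier results. Fix an alternating diagram $D$ with Tait digraph $G$, an axis $\ell$ in the projection plane, a root $r\in V(G)$, and a basepoint $q$ in a face of $G$ adjacent to $r$. Write $\rho(G)$ for the Tait digraph of $\rho(D)$.

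First, I will apply Lemma \ref{lem:diagram_flip}. It says that $\rho(G)$, as a plane digraph with its sign partition, is exactly the graph obtained by reflecting $G$ across $\ell$ and reversing all negative edges. This is precisely the graph $\rho(G)$ of Definition \ref{def:rho_g} built from $G$ with its sign partition $E_+\sqcup E_-$. The lemma also gives that signs are preserved under reflection, so the sign partition that $\rho(G)$ inherits as an abstract signed digraph agrees with the one it carries as a Tait digraph.

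Second, I will apply Lemma \ref{lem:graph_flip}, which gives $P_{(G,r,q)}=P_{(\rho(G),\rho(r),\rho(q))}$. The one point to check is that the pair $(\rho(r),\rho(q))$ is admissible for $\rho(D)$, meaning $\rho(q)$ lies in a face of $\rho(G)$ adjacent to $\rho(r)$. This holds because reflection is a homeomorphism of the plane. It carries the face of $G$ containing $q$ to the face of $\rho(G)$ containing $\rho(q)$, and since $r$ lies on the boundary of that face of $G$, $\rho(r)$ lies on the boundary of the image face. (Reversing edge directions does not change the underlying embedding.)

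Third, I will conclude using Theorem \ref{thm:root_indep}, which makes both sides independent of the chosen adjacent pair. The left side is therefore $P_D$ and the right side is $P_{\rho(D)}$, so $P_D=P_{\rho(D)}$.

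No step here is a real obstacle. The only care needed is the adjacency bookkeeping in the second step and confirming that the two notions of "$\rho(G)$" coincide together with their sign partitions.
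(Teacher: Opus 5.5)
Your proposal is correct and follows the paper's argument: the corollary is obtained by combining Lemma~\ref{lem:diagram_flip}, Lemma~\ref{lem:graph_flip}, and Theorem~\ref{thm:root_indep}. The paper leaves implicit your check that the reflected pair $(\rho(r),\rho(q))$ is still adjacent, and that check is exactly what is needed to apply Theorem~\ref{thm:root_indep} to $\rho(D)$.
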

	
	To handle flypes, we will decompose Tait digraphs into ``flipped'' and ``non-flipped'' pieces. To this end, we call two vertices $\{v_1, v_2\}$ of a connected graph $G$ a {\em separating pair} if $G \setminus \{v_1, v_2\}$ is disconnected. Our next lemma gives an identity for the polynomial $P_{(G,r,q)}$ whenever $G$ admits such a pair.
	
	\begin{lemma}
		\label{lem:two_cut}
		Let $G = (V,E)$ be any connected plane digraph with sign partition $E = E_+ \sqcup E_-$, such that $G$ contains a separating pair of vertices $r, v \in V$. Let $G_1 = (V_1, E_1)$ and $G_2 = (V_2, E_2)$ be the subgraphs of $G$ which are the closures of the two components of $G \setminus \{r, v\}$, and let $q$ be a basepoint in a face adjacent to $r$. Then
		\begin{equation}
			\label{eq:pid}
			P_{(G, r, q)} = P_{(G \setminus E_2, r, q)} P_{(G / E_1, r, q)} + P_{(G \setminus E_1, r, q)} P_{(G / E_2, r, q)}.
		\end{equation}
	\end{lemma}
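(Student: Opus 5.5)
The plan is a direct bijective decomposition of the spanning trees of $G$. Since $\{r,v\}$ is a separating pair and $G_1 \cup G_2 = G$ with $G_1 \cap G_2 = \{r,v\}$, the restriction $T \cap E_1$ of a spanning tree $T$ of $G$ either connects $r$ to $v$ inside $G_1$ or it does not. In the first case $T \cap E_1$ is a spanning tree of $G_1 = G\setminus E_2$ and $T\cap E_2$ is a spanning tree of $G_2$ with $r$ and $v$ identified, i.e.\ of $G/E_1$. In the second case $T \cap E_1$ is a two-component spanning forest of $G_1$ separating $r$ from $v$, and $T\cap E_2$ must be a spanning tree of $G_2 = G\setminus E_1$. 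The roles are symmetric, so this second case is equivalently $T \cap E_2$ a spanning tree of $G/E_2$ (after contracting the tree $T\cap E_2$ inside $G_2$, the forest in $G_1$ becomes a tree). Hence
\[
\mathcal{T}(G) \;\longleftrightarrow\; \mathcal{T}(G\setminus E_2)\times\mathcal{T}(G/E_1)\ \sqcup\ \mathcal{T}(G\setminus E_1)\times \mathcal{T}(G/E_2),
\]
with the two pieces of $T$ being exactly $T\cap E_1$ and $T\cap E_2$. This handles the combinatorics.

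The main work is to show that for each edge $e$, its contribution to the weight of $T$ (internal activity if $e\in T\cap E_+$, external activity if $e\in E_-\setminus T$) equals its contribution to the weight of the corresponding tree in the appropriate minor, with the root $r$ and basepoint $q$ kept. Here $r$ survives in every minor, and $q$ is still in a face of each minor since faces only merge. I would treat the first case; the second is symmetric. Take $e\in T\cap E_1\cap E_+$. The fundamental cut of $e$ in $T$ restricted to the root side: removing $e$ from the tree $T\cap E_1$ separates $G_1$ into a part containing $r$ and a part not. In $T$ the rest of the tree, $T \cap E_2$, attaches only through $r$ and $v$, and both of these lie on the same side as in $G_1$. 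So the direction of $e$ relative to $r$ is the same in $T$ and in $T\cap E_1\subset G\setminus E_2$. For $e\in T\cap E_2\cap E_+$, the same argument works in $G/E_1$, where $r=v$ is the root. Internal activity depends only on abstract tree structure, so this step is routine.

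The expected obstacle is external activity, which depends on the embedding and on $q$. For $e\in E_-\setminus T$ with $e\in E_1$, the fundamental cycle in $T$ lies in $T\cap E_1$ plus $e$, because that subtree already connects the endpoints. So the cycle is literally the same plane cycle in $G\setminus E_2$, and the orientation counter-clockwise around $q$ is unchanged. For $e\in E_2$, the fundamental cycle in $G/E_1$ corresponds in $G$ to a path in $T\cap E_2$, possibly closed through $r\to v$ via the tree path $\gamma$ in $T\cap E_1$. I must check that contracting $E_1$ (i.e.\ shrinking $\gamma$ and $G_1$ to the point $r$) does not change which side of the cycle $q$ lies on. This is where the hypothesis that $q$ is adjacent to $r$ enters. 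Since $G_1$ and $G_2$ meet only at $r,v$, the planar embedding places $G_1$ within a region bounded by cycles of $G_2\cup\gamma$. I would argue that the face containing $q$ touches $r$, so $q$ can be moved into a small neighborhood of $r$ without crossing $G$, and hence without changing any winding. Winding around a point near $r$ is preserved under contracting $G_1$ to $r$, since the contraction is an isotopy-like collapse of a disk region avoiding that neighborhood side. I expect this planar-topology argument, and the care needed when $q$'s face lies between $G_1$ and $G_2$, to be the delicate step. The second case is handled identically with indices swapped, and summing the weights over the bijection gives (\ref{eq:pid}).
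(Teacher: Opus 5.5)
Your bijection $\mathcal{T}(G)\leftrightarrow \mathcal{T}(G\setminus E_2)\times\mathcal{T}(G/E_1)\sqcup\mathcal{T}(G\setminus E_1)\times\mathcal{T}(G/E_2)$ and your comparison of internal activities are exactly the paper's. You are also right that the external activity of an edge $e\in E_2\setminus T$, for $T$ in the first class, is the delicate point; the paper only says it is ``handled similarly''. Your resolution of it, however, fails.

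Your claim that $q$ survives in each minor ``since faces only merge'' is true for deletion but not for contraction. Contracting $E_1$ destroys every face of $G$ bounded only by edges of $E_1$. Suppose $q$ lies in such a face and that face is incident to $r$. Pushing $q$ next to $r$ then puts it in an angular sector between two edges of $G_1$, which is exactly the region the collapse of $G_1$ swallows. Whether $q$ is inside the fundamental cycle $\gamma_1\cup\beta$ (with $\gamma_1\subset T\cap E_1$ and $\beta\subset E_2$) then depends on which edge at $r$ the path $\gamma_1$ uses. So no placement of $q$ in $G/E_1$ works, and (\ref{eq:pid}) is actually false under the stated hypothesis.

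Here is a counterexample.
Let $G_1$ be the square with vertices $r,a,v,b$ in cyclic order plus the diagonal $ab$, with all five edges positive.
Let $G_2$ be a path $r\to c\to v$ drawn outside the square on the side of $a$, with both edges negative.
Put $q$ in the triangle $rab$.
Then $G\setminus\{r,v\}$ has exactly the two components $\{a,b\}$ and $\{c\}$, and $q$'s face is incident to $r$.
Consider the $16$ trees with $T\cap E_1$ spanning $G_1$. For each, the unique edge of $E_2\setminus T$ is externally semi-active exactly when the $r$--$v$ path of $T\cap E_1$ starts with $ra$, since then $q$ lies outside the fundamental cycle. This happens for $4$ of the $8$ spanning trees of $G_1$.
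Hence $P_{(G,r,q)}(1,1,z,w)=8z+8w+8$. The right side of (\ref{eq:pid}) is $8\cdot 2z+1\cdot 8$ or $8\cdot 2w+8$, depending on which face of the directed $2$-cycle $G/E_1$ you put $q$ in.

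The hypothesis that makes the step work is different: $q$ must lie in a face of $G$ bordering both pieces. That is, $q$ lies in the face $F_0$ of $G_1$ containing $G_2\setminus\{r,v\}$ and in the face $F_0'$ of $G_2$ containing $G_1\setminus\{r,v\}$. This is what holds in the flype computation, where the basepoint sits between $G_1$ and $G_2\cup\{e\}$.

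Under this hypothesis the argument is short.
Since $G_1$ is connected, $F_0$ is an open disk in $\R^2\cup\{\infty\}$.
The $E_2$-part $\beta$ of the fundamental cycle is an arc from $r$ to $v$ running through $F_0$ and cutting it in two, while $\gamma_1$ lies outside $F_0$.
So the complementary component of $\gamma_1\cup\beta$ containing $q$ is determined by $\beta$ alone, and with it the counter-clockwise orientation along $\beta$ (hence along $e$).
This agrees with what one sees in $G/E_1$, whose embedding is obtained by collapsing the complement of $F_0$ to the single point $r=v$.
Fundamental cycles lying in one component of $T\cap E_2$ avoid this issue for the same reason.
The second class of trees is symmetric, using $F_0'$.

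Adjacency of $q$ to $r$ is not what this argument uses.
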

	
	As usual, the graphs on the right side of (\ref{eq:pid}) inherit their planar embeddings and sign partitions from $G$.
	
	\begin{proof}
		Let $\mathcal{T}$ be the set of spanning trees of $G$, and let $\mathcal{T}_{\setminus E_2}$, $\mathcal{T}_{/ E_1}$, $\mathcal{T}_{\setminus E_1}$, and $\mathcal{T}_{/ E_2}$ denote the respective spanning tree sets of $G \setminus E_2$, $G / E_1$, $G \setminus E_1$, and $G / E_2$. We first observe that there is a bijection
		\begin{equation}
			\label{eq:tree_bij}
			\mathcal{T} \leftrightarrow (\mathcal{T}_{\setminus E_2} \times \mathcal{T}_{/ E_1}) \sqcup (\mathcal{T}_{\setminus E_1} \times \mathcal{T}_{/ E_2}).
		\end{equation}
		Indeed, any spanning tree $T \in \mathcal{T}$ contains a unique path connecting $r$ and $v$, which lies in either $E_1$ or $E_2$ since $r$ and $v$ separate $G$. In the first case $T \cap E_1$ gives a spanning tree of $G \setminus E_2$, while $T \cap E_2$ gives a spanning tree of $G / E_1$. The second case is the same with the indices switched, which completes one direction of the bijection. For the other direction, suppose $T_1 \in \mathcal{T}_{\setminus E_2}$ and $T_2 \in \mathcal{T}_{/ E_1}$. Then it is easy to check that $T_1 \cup T_2$ is a spanning tree of $\mathcal{T}$, with
		$$
		T_1 \cup T_2 \to (T_1, T_2)
		$$
		under the map just described. The same is true if $T_1 \in \mathcal{T}_{\setminus E_1}$ and $T_2 \in \mathcal{T}_{/ E_2}$.
		
		Now fix a tree $T \in \mathcal{T}$, and without loss of generality let $(T_1, T_2) \in \mathcal{T}_{\setminus E_2} \times \mathcal{T}_{/ E_1}$ be its image under (\ref{eq:tree_bij}). To prove the lemma we will show that
		\begin{align*}
			\iota_+(T) &= \iota_+(T_1) + \iota_+(T_2)  \\
			\overline{\iota}_+(T) &= \overline{\iota}_+(T_1) + \overline{\iota}_+(T_2)\\
			\varepsilon_-(T) &= \varepsilon_-(T_1)  + \varepsilon_-(T_2) \\
			\overline{\varepsilon}_-(T) &= \overline{\varepsilon}_-(T_1) + \overline{\varepsilon}_-(T_2).
		\end{align*}
		Then the weight of $T$ in $P_{(G,r,q)}$ is equal to the product of the weights of $T_1$ in $P_{(G \setminus E_2, r, q)}$ and $T_2$ in $P_{(G / E_1, r, q)}$, which implies the result.
		
		Fix an edge $e \in E$---then it suffices to prove that $e$'s activity with respect to $T$ is the same as its activity with respect to $T_1$ if $e \in E_1$, or with respect to $T_2$ if $e \in E_2$. This amounts to some case checking. First suppose $e \in T$, and let $\gamma$ be the unique path in $T$ from the root $r$ through $e$. Then $e$ is internally semi-active in $T$ if and only if $e$ points away from $r$ in $\gamma$. If $e \in E_1$ then $\gamma \subset E_1$ as well, since by assumption $T \cap E_2$ is disconnected. In this case $\gamma$ is also the unique path from $r$ through $e$ in $T_1$, and it's clear that $e$ is internally semi-active in $T$ if and only if $e$ is internally semi-active in $T_1$.
		
		If $e \in T \cap E_2$, then the path $\gamma$ may still pass through $E_1$. Let $\gamma_1 = \gamma \cap E_1$; then the unique path from $r$ through $e$ in $T_2$ is given by the contraction
		$$
		\gamma' = \gamma / \gamma_1.
		$$
		Clearly $e$ points away from $r$ in $\gamma$ if and only if it points away from $r$ in $\gamma'$, so $e$ is internally semi-active in $T$ if and only if $e$ is internally semi-active in $T_2$ as desired. External activity is handled similarly, by considering fundamental cycles instead of paths.
	\end{proof}
	
	\begin{rmk}
		In \cite[Proposition 2.21]{ajk24}, Azarpendar, Juh\'asz and K\'alm\'an  prove an identity similar to (\ref{eq:pid}) for the Alexander polynomial of any link; an alternate proof is given in \cite[Theorem 5.3]{msbv25}. In the case of alternating links, this identity can be obtained from our Lemma \ref{lem:two_cut} by passing to the Alexander polynomial and considering the special case where the separating pair of vertices is part of a diagrammatic Murasugi sum.
	\end{rmk}
	
	To put the preceding lemmas in context, let $D$ be an alternating link diagram with sub-tangles $T_1$ and $T_2$, and crossing $c$ as in Figure \ref{fig:flype}. Let $G$ be the Tait digraph of $D$, and let $G_i$ be the subgraph of $G$ corresponding to $T_i$ for $i \in \{1,2\}$. Let $e$ be the edge passing through $c$; then the effect of flyping the alternating link diagram of Figure \ref{fig:flype} on its Tait digraph is shown in Figure \ref{fig:flype_tait}. We observe that the vertices $r = G_1 \cap \text{cl}(e)$ and $v = G_1 \cap G_2$ form a separating pair.
	
	\begin{figure}
		\centering
		
		\begin{tikzpicture}
			
			\coordinate (tl) at (-3,1.5);
			\coordinate (ml) at (-3,0.5);
			\coordinate (bl) at (-3,-1.5);
			
			\fill[yellow!20] (ml) to[out=-45,in=45, looseness=1.5] (bl) to[out=135,in=-135, looseness=1.5] (ml);
			\draw[thick,black] (ml) to[out=-45,in=45, looseness=1.5] (bl);
			\draw[thick,black] (bl) to[out=135,in=-135, looseness=1.5] (ml);
			\node at (-3,-0.5) {$G_1$};
			
			\fill[gray!30] (tl) to[out=-10,in=10,looseness=1.5] (bl) to[out=-10,in=10,looseness=2.5] (tl);
			\draw[thick,black] (tl) to[out=-10,in=10,looseness=1.5] (bl);
			\draw[thick,black] (bl) to[out=-10,in=10,looseness=2.5] (tl);
			\node at (-1.3,0) {$G_2$};
			
			\fill[black] (tl) circle (2.5pt);
			\fill[black] (ml) circle (2.5pt);
			\fill[black] (bl) circle (2.5pt);
			
			\draw[thick, black] (ml) -- node[left] {$e$} (-3,1.4);
			
			\coordinate (tr) at (1.5,1.5);
			\coordinate (mr) at (1.5,-0.5);
			\coordinate (br) at (1.5,-1.5);
			
			\fill[yellow!20] (tr) to[out=-45,in=45,looseness=1.5] (mr) to[out=135,in=-135,looseness=1.5] (tr);
			\draw[thick,black] (tr) to[out=-45,in=45, looseness=1.5] (mr);
			\draw[thick,black] (mr) to[out=135,in=-135,looseness=1.5] (tr);
			\node at (1.5,0.5) {$\rho(G_1)$};
			
			\fill[gray!30] (tr) to[out=-10,in=10,looseness=1.5] (br) to[out=-10,in=10,looseness=2.5] (tr);
			\draw[thick,black] (tr) to[out=-10,in=10,looseness=1.5] (br);
			\draw[thick,black] (br) to[out=-10,in=10,looseness=2.5] (tr);
			\node at (3.2,0) {$G_2$};
			
			\fill[black] (tr) circle (2.5pt);
			\fill[black] (mr) circle (2.5pt);
			\fill[black] (br) circle (2.5pt);
			
			\draw[thick, black] (br) -- node[left] {$e$} (1.5,-0.6);
			
			\draw[->, thick] (-0.5,0) -- (0.5,0);
			
		\end{tikzpicture}
		
		\caption{Flyping a Tait digraph}
		\label{fig:flype_tait}
	\end{figure}
	
	Before proving that $P$ is unchanged by flypes, we need one more lemma allowing us to add and remove bridges. Recall that a {\em bridge} of a connected graph is an edge whose removal disconnects it.
	
	\begin{lemma}
		\label{lem:bridge}
		Let $G = (V,E)$ be a plane digraph with root $r \in V$, basepoint $q \in \R^2 \setminus G$, and sign partition $E = E_+ \sqcup E_-$. Suppose $G$ contains a bridge $e$.
		\begin{enumerate}[label=(\roman*)]
			\item If $e \in E_-$, then $P_{(G,r,q)} = P_{(G/e, r, q)}$.
			\item If $e \in E_+$ and $e$ points away from the component of $G \setminus e$ containing $r$, then $P_{(G,r,q)} = x^{-1} P_{(G / e, r, q)}$.
		\end{enumerate}
	\end{lemma}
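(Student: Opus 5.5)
The plan is to construct an explicit bijection between spanning trees of $G$ and spanning trees of $G/e$, and then check that it preserves every activity except that of $e$ itself. Since $e$ is a bridge, every spanning tree $T$ of $G$ contains $e$. Conversely, for any spanning tree $T'$ of $G/e$, the set $T' \cup \{e\}$ is a spanning tree of $G$. So $T \mapsto T \setminus \{e\}$ is a bijection $\mathcal{T}_G \to \mathcal{T}_{G/e}$. Here $G/e$ inherits its embedding, its sign partition, and the root $r$ (or the image of $r$ if $r$ is an endpoint of $e$). Contracting a bridge does not change the faces of $G$, so $q$ remains a basepoint for $G/e$.

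Fix $T$ and let $T' = T \setminus \{e\}$. The first task is to compare activities for edges $e' \neq e$.

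For $e' \in T$: the two components of $T \setminus \{e'\}$ become, after contracting $e$, exactly the two components of $T' \setminus \{e'\}$. The component containing $r$ is still the one containing the image of $r$. So $e'$ points away from the root side in $T$ if and only if it does in $T'$, and internal semi-activity is unchanged.

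For $e' \notin T$: the fundamental cycle $\mathrm{Cyc}_T(e')$ cannot contain the bridge $e$, since a bridge lies on no cycle. Hence it is unchanged as a cycle in the plane after contracting $e$, with the same orientation relative to $q$. I would make this precise by performing the contraction within a small neighbourhood of $e$ disjoint from the cycle, so the winding around $q$ is visibly preserved. External semi-activity is therefore unchanged as well.

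It remains to account for $e$ itself, which lies in $T$ and so contributes only through internal activity, and only if $e \in E_+$.

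In case (i), $e \in E_-$, so $e$ contributes nothing to any of $\iota_+, \overline{\iota}_+, \varepsilon_-, \overline{\varepsilon}_-$. Each tree therefore has the same weight as its image, which gives $P_{(G,r,q)} = P_{(G/e,r,q)}$.

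In case (ii), $e \in E_+$. The two components of $T \setminus \{e\}$ have the same vertex sets as the two components of $G \setminus e$, because $e$ is a bridge. So $e$ points away from the root component in $T$ exactly when it points away from the component of $G \setminus e$ containing $r$, which holds by hypothesis. Thus $e$ is internally semi-active for every $T$ and contributes a factor $x^{-1}$ to each weight. Summing over the bijection gives $P_{(G,r,q)} = x^{-1} P_{(G/e,r,q)}$.

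The only step needing real care is the planar claim about external activity: that contracting $e$ preserves both the faces (so $q$ still lies in a face) and the counter-clockwise orientation of cycles around $q$ in $\R^2 \cup \{\infty\}$. This is intuitively clear because the bridge lies inside a single face and can be shrunk by an isotopy supported near it that misses every cycle. Everything else is routine bookkeeping.
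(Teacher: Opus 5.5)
Your proposal is correct and follows the paper's proof. It uses the same bijection $T \mapsto T \setminus e$, with $e$ contributing nothing in case (i) and a factor $x^{-1}$ in case (ii), and it spells out the weight-preservation for the other edges that the paper leaves as easy to check. One small imprecision: a fundamental cycle can pass through an endpoint of $e$, so the contracting deformation need not miss every cycle---it only needs to be supported in a small neighbourhood of $e$ avoiding $q$, which is enough to preserve each cycle's orientation around $q$.
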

	
	\begin{proof}
		Let $\mathcal{T}$ and $\mathcal{T}_{/e}$ be the respective spanning tree sets of $G$ and $G/e$. As a bridge $e$ is contained in every spanning tree of $G$, and we have a bijection
		$$
			\mathcal{T} \leftrightarrow \mathcal{T}_{/e}
		$$
		given by $T \mapsto T \setminus e$ for any $T \in \mathcal{T}$. For (i), if $e \in E_-$ then $e$ does not count toward $\iota_+(T)$, $\overline{\iota}_+(T)$, $\varepsilon_-(T)$ or $\overline{\varepsilon}_-(T)$ for any tree $T$---the first two because $e \in E_-$, and the last two because $e \in T$. It is then easy to check that the above bijection preserves weights, so the two polynomials are equal. 
		
		The proof of (ii) is similar. We observe that $e$ counts toward $\iota_+(T)$ for every spanning tree, so that $P_{(G,r,q)}$ contains a factor of $x^{-1}$.
	\end{proof}
	
	We can now show $P$ does not detect flypes.
	
	\begin{lemma}
		\label{lem:flype_invar}
		Let $K$ be an alternating link, and let $D$ and $D'$ be two alternating diagrams of $K$ related by a flype. Then $P_D = P_{D'}$.
	\end{lemma}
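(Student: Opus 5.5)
Let $G$ be the Tait digraph of $D$ and $G'$ that of $D'$, with the notation of Figure \ref{fig:flype_tait}: $G_1,G_2$ are the subgraphs coming from the tangles $T_1,T_2$, and $e$ is the edge through the crossing $c$. We take $r = G_1\cap \mathrm{cl}(e)$ as root and $v = G_1\cap G_2$ as the second vertex. By Theorem \ref{thm:root_indep} we may choose any adjacent root and basepoint, so we take the root $r$ and a basepoint $q$ in a face adjacent to $r$. We choose $q$ on the side not affected by the flip, so that its position relative to $G_2$ and $e$ is the same in $G$ and $G'$.

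Since $\{r,v\}$ is a separating pair, Lemma \ref{lem:two_cut} applies. We group $e$ with $G_2$, i.e.\ take $E_1 = E(G_1)$ and $E_2 = E(G_2)\cup\{e\}$. Then
$$P_{(G,r,q)} = P_{(G\setminus E_2,r,q)}P_{(G/E_1,r,q)} + P_{(G\setminus E_1,r,q)}P_{(G/E_2,r,q)}.$$
We do the same for $G'$. In $G'$ the separating pair consists of the vertex where $\rho(G_1)$ meets $G_2$ (the top vertex in the figure) and the middle vertex, and $e$ now sits between the middle vertex and the bottom vertex of $G_2$. The plan is to match the four factors term by term.

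\begin{itemize}
\item $G\setminus E_2$ is just $G_1$. Its counterpart in $G'$ is $\rho(G_1)$, which by Lemma \ref{lem:diagram_flip} is the reflection of $G_1$ with its negative edges reversed. Lemma \ref{lem:graph_flip} then gives equal polynomials, once the root and basepoint correspond: the root goes to its reflection, and the basepoint goes to the reflected adjacent face.
\item $G/E_2$ is $G_1$ with its two special vertices identified. The same reflection argument applies to this quotient.
\item $G\setminus E_1$ and $G/E_1$ are built from $G_2$ together with $e$, in which $e$ becomes either a pendant edge or a loop. In $G'$ the corresponding graphs are $G_2$ with $e$ attached at the other vertex of the separating pair. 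After deleting or contracting $G_1$, the placement of $e$ relative to $G_2$ differs only by moving it along the chain. When $e$ is a bridge, Lemma \ref{lem:bridge} lets us remove it at the cost of a factor $1$ or $x^{-1}$. When $e$ is a loop, it lies in no spanning tree and its external activity is determined by the orientation of the loop around $q$; this should give matching factors on both sides.
\end{itemize}

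The main obstacle is the bookkeeping of roots, basepoints and edge directions. Four points need care:
\begin{itemize}
\item the root must lie at a vertex of the separating pair for Lemma \ref{lem:two_cut};
\item Lemma \ref{lem:graph_flip} needs the reflected root and basepoint to correspond to those used in $G'$;
\item Lemma \ref{lem:bridge}(ii) needs $e$ to point away from the root, so we may need to re-root using root-independence of the pieces;
\item the pieces themselves are Tait digraphs of smaller alternating diagrams (Corollary \ref{cor:closed}), hence root-independent by Theorem \ref{thm:root_indep}.
\end{itemize}
Root-independence of the pieces is what lets us move roots freely between the two sides. The residual factors from $e$ (an $x^{-1}$, $y^{-1}$, $z$ or $w$, according to its sign and direction) should agree because flyping preserves the sign of $c$ and, as in Lemma \ref{lem:diagram_flip}, the direction of $e$ relative to $G_2$. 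We would check this separately for $e$ positive and for $e$ negative.
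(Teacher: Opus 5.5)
Your outline (Lemma~\ref{lem:two_cut} at the separating pair, Lemma~\ref{lem:graph_flip} for the two pieces built from $G_1$, Lemma~\ref{lem:bridge} for the piece where $e$ is pendant) matches the paper's. However, the one factor that carries the real content is misidentified, and your treatment of it rests on a false claim.

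In $G/E_1$ the edge $e$ is not a loop. Contracting $G_1$ identifies $r$ with $v$, so $e$ joins the two attachment vertices of $G_2$ and runs parallel to it. Nothing splits off there as a ``residual factor''. In orientations (a) and (b) of Figure~\ref{fig:flype_ors}, $G/G_1$ and $G'/\rho(G_1)$ are the same plane digraph $H = G_2\cup e$. But Lemma~\ref{lem:two_cut} forces the root to be a vertex of the separating pair, so the root of $H$ lands at one end of $e$ coming from $G$ and at the other end coming from $G'$. Also, for Lemma~\ref{lem:graph_flip} to match $G/(G_2\cup e)$ with $G'/(G_2\cup e)$, the basepoint has to move to the other side of $e$. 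So a change of root and basepoint in $H$ cannot be avoided.

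Your justification for that change fails. Corollary~\ref{cor:closed} only covers contracting positive edges and deleting negative ones. Forming $G/E_1$, $G\setminus E_1$, etc.\ contracts negative edges and deletes positive ones. So the pieces are not Tait digraphs in general, and Theorem~\ref{thm:root_indep} cannot be invoked for them on that basis.

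What is true is that $H$ is the Tait digraph of the diagram obtained by deleting $T_1$ and joining the strands above it straight through to those below (Figure~\ref{fig:int_diag}). You would need to exhibit this. It works only when that gluing respects orientations, i.e.\ in cases (a) and (b).

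In case (c) it fails, and the term-by-term matching genuinely breaks down: there $e$ ends up pointing in opposite directions in $G/G_1$ and $G'/\rho(G_1)$, so they are not even the same digraph. Your proposal has no argument for this case. The paper handles it by reduction:
\begin{enumerate}
\item use case (b) to flype so that $T_2$, rather than $T_1$, is turned over;
\item rotate the whole diagram about a line in the plane (Corollary~\ref{cor:diag_flip});
\item isotope $T_2$ through infinity.
\end{enumerate}

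Relatedly, your claim that the flype preserves the direction of $e$ relative to $G_2$ is wrong. In cases (a) and (b), $e$ points from $G_1$ into $G_2$ in $G$, but from $G_2$ into $\rho(G_1)$ in $G'$. The bridge factors still agree only because the root also switches sides of $e$.
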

	
	\begin{proof}
		Let $T_1$ and $T_2$ be the sub-tangles of $D$ relevant to the flype, and $c$ the additional crossing. Let $G$ be the Tait digraph of $D$, $G_i$ the subgraph corresponding to $T_i$ for $i \in \{1,2\}$, and $e$ the edge of $G$ passing through $c$. By mirroring $D$ if necessary and applying Proposition \ref{prop:companions} (ii), we assume that $c$ is arranged relative to $T_1$ as on the left side of Figure \ref{fig:flype}. Then the effect of flyping $D$ on the Tait digraph is shown in Figure \ref{fig:flype_tait}.
		
		\begin{figure}
			\centering
			
			\begin{subfigure}[t]{0.22\textwidth}
				\centering
				\begin{tikzpicture}
					\draw[-{Stealth}, ultra thick, red] (-0.5,0) -- (0.5,1);
					\draw[ultra thick, red] (0.5,0) -- (0.15,0.35);
					\draw[{Stealth}-, ultra thick, red] (-0.5,1) -- (-0.15, 0.65);
					
					\draw[thick, black,fill=yellow!20] (-0.75,0) rectangle (0.75,-1) node[pos=.5] {$T_1$};
					
					\draw[-{Stealth}, ultra thick, red] (-0.5,-1.5) -- (-0.5,-1);
					\draw[-{Stealth}, ultra thick, red] (0.5,-1.5) -- (0.5,-1);
				\end{tikzpicture}
				\caption{}
			\end{subfigure}
			\begin{subfigure}[t]{0.22\textwidth}
				\centering
				\begin{tikzpicture}
					\draw[-{Stealth}, ultra thick, red] (-0.5,0) -- (0.5,1);
					\draw[{Stealth}-, ultra thick, red] (0.5,0) -- (0.15,0.35);
					\draw[ultra thick, red] (-0.5,1) -- (-0.15, 0.65);
					
					\draw[thick, black,fill=yellow!20] (-0.75,0) rectangle (0.75,-1) node[pos=.5] {$T_1$};
					
					\draw[-{Stealth}, ultra thick, red] (-0.5,-1.5) -- (-0.5,-1);
					\draw[{Stealth}-, ultra thick, red] (0.5,-1.5) -- (0.5,-1);
				\end{tikzpicture}
				\caption{}
			\end{subfigure}
			\begin{subfigure}[t]{0.22\textwidth}
				\centering
				\begin{tikzpicture}
					\draw[-{Stealth}, ultra thick, red] (-0.5,0) -- (0.5,1);
					\draw[{Stealth}-, ultra thick, red] (0.5,0) -- (0.15,0.35);
					\draw[ultra thick, red] (-0.5,1) -- (-0.15, 0.65);
					
					\draw[thick, black,fill=yellow!20] (-0.75,0) rectangle (0.75,-1) node[pos=.5] {$T_1$};
					
					\draw[{Stealth}-, ultra thick, red] (-0.5,-1.5) -- (-0.5,-1);
					\draw[-{Stealth}, ultra thick, red] (0.5,-1.5) -- (0.5,-1);
				\end{tikzpicture}
				\caption{}
			\end{subfigure}
			
			\caption{Possible flype orientations}
			\label{fig:flype_ors}
		\end{figure}
		
		Up to orientation reversal (which may be handled with Proposition \ref{prop:companions} (i)), there are three possibilities for the orientation of $K$ between the tangles $T_1$ and $T_2$. These are shown in Figure \ref{fig:flype_ors} in a neighborhood of $T_1$. Cases (a) and (b) can be handled by the same argument, which is most easily expressed diagrammatically. To simplify notation we write $\langle G \rangle$ to mean $P_G$ for a rooted, basepointed plane digraph $G$. Then for orientations (a) and (b), we calculate:
		\begin{align*}
		\Big\langle \tikzone \Big\rangle &= \Big\langle \tikztwo \Big\rangle \Big\langle \tikzthree \Big\rangle + \Big\langle \tikzfour \Big\rangle \Big\langle \tikzfive \Big\rangle \\
		\vspace{-5cm}
		&= \Big\langle \tikztwof \Big\rangle \Big\langle \tikzthree \Big\rangle + \Big\langle \tikzfourf \Big\rangle \Big\langle \tikzfive \Big\rangle \\
		&= \Big\langle \tikztwof \Big\rangle \Big\langle \tikzthreef \Big\rangle + \Big\langle \tikzfourff \Big\rangle \Big\langle \tikzfivef \Big\rangle \\
		&= \Big\langle \tikzonef \Big\rangle
		\end{align*}
		
		The initial diagram shows the Tait digraph $G$---we've circled the vertex $r = G_1 \cap \text{cl}(e)$ to distinguish it as the root of $G$, and the red asterisk marks the basepoint $q$. We then obtain the first equation by applying Lemma \ref{lem:two_cut} to the separating pair of vertices $r$ and $v = G_1 \cap G_2$. In the second equation we use Lemma \ref{lem:graph_flip} to perform the operation $\rho$ the first and third graphs, $G \setminus (G_2 \cup e)$ and $G / (G_2 \cup e)$.
		
		Multiple moves occur between the second and third lines. First we use Theorem \ref{thm:root_indep} to change the root and basepoint of the second graph, $G / G_1$---this is possible because $G / G_1$ is the Tait digraph of the alternating link diagram in Figure \ref{fig:int_diag}.\footnote{This is where the calculation fails for  Figure \ref{fig:flype_ors}c: in that case the strands above and below the tangle $T_1$ cannot be glued together in the desired way without changing their orientations.} (This diagram is alternating by Lemma \ref{lem:alt_crossing_signs}, since all its crossings have the same type.) Next, we change the third graph by isotoping it through the point at infinity, which preserves the polynomial $P$. Finally, in the fourth graph, we've applied Lemma \ref{lem:bridge} twice to move the bridge edge $e$ from one side of $G \setminus G_1$ to the other.
		
		\begin{figure}
			\centering
			
			\begin{tikzpicture}
				
				\draw[ultra thick, red!30] (-4.4,0.65) to[out=135,in=-90] (-4.75,1) to[out=90,in=90] (-1.25,1) -- (-1.25,0) to[out=-90,in=-90] (-4.75,0) to[out=90,in=-90] (-3.75,1) to[out=90,in=90] (-2.25,1) -- (-2.25,0) to[out=-90,in=-90] (-3.75,0) to[out=90,in=-45] (-4.1, 0.35); 
				
				\draw[thick, black,fill=gray!30] (-1,1) rectangle (-2.5,0) node[pos=.5] {$T_2$};
				\node at (-4.75, 0.5) {$c$};
				
			\end{tikzpicture}
			
			\caption{The link diagram for $G / G_1$}
			\label{fig:int_diag}
		\end{figure}
		
		The last equation follows by a second application of Lemma \ref{lem:two_cut}, and we conclude that $P_D = P_{D'}$ whenever the diagram $D$ is oriented as in Figure \ref{fig:flype_ors} (a) or (b). We still need to show invariance under flypes with orientation (c)---this can be done using a similar chain of equivalences, or by combining case (b) with Corollary \ref{cor:diag_flip}. We do the latter:
		
		\begin{align*}
		\Big\langle \tikzdone \Big\rangle &= \Big\langle \tikzdtwo \Big\rangle \\
		&= \Big\langle \tikzdthree \Big\rangle \\
		&= \Big\langle \tikzdfour \Big\rangle
		\end{align*}
		
		In the first equation above, we've used invariance under orientation (b) to flype the diagram so that $T_2$ flips and $T_1$ does not. In the second equality, we apply Corollary \ref{cor:diag_flip} to rotate the enter diagram around a vertical line in the plane. Finally, we isotope the tangle $T_2$ through the point at infinity to obtain the desired identity.
	\end{proof}
	
	\begin{thm}
		\label{thm:diag_indep}
		Let $K$ be an alternating link, and let $D$ and $D'$ be any two alternating diagrams for $K$. Then $P_D = P_{D'}$.
	\end{thm}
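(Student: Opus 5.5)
The argument combines the Menasco--Thistlethwaite flyping theorem \cite{meth91} with the invariance results already established. By Theorem \ref{thm:root_indep}, $P_D$ is well defined for a single diagram, independent of the adjacent root/basepoint pair. By Lemma \ref{lem:flype_invar}, $P_D$ is unchanged by flypes. Since any two \emph{reduced} alternating diagrams of $K$ are related by a finite sequence of flypes, it then remains to show that removing a nugatory crossing from an alternating diagram does not change $P$. Given that, we reduce both $D$ and $D'$ to reduced alternating diagrams $D_0$ and $D'_0$, which have $P_D = P_{D_0}$ and $P_{D'} = P_{D'_0}$. The flyping theorem then connects $D_0$ to $D'_0$, and Lemma \ref{lem:flype_invar} gives $P_{D_0} = P_{D'_0}$. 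Removing a nugatory crossing from an alternating diagram yields an alternating diagram, so every intermediate diagram stays in the class where $P$ is defined.

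For the nugatory step, let $c$ be a nugatory crossing of the alternating diagram $D$, and let $e$ be the corresponding edge of the Tait digraph $G$. A simple closed curve meets $D$ only at $c$, so it meets $G$ only along $e$. Hence $e$ is either a bridge of $G$ or a loop of $G$; in the loop case, $e^*$ is a bridge of $G^*$.

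\textbf{Case 1: $e$ is a bridge and $c$ is negative.} Lemma \ref{lem:bridge}(i) gives $P_G = P_{G/e}$, with any choice of root and basepoint. The undirected graph $G/e$ is the Tait graph of the diagram $D''$ obtained by untwisting $c$. By Lemma \ref{lem:alt_crossing_signs}, $D''$ is alternating with type I shading. Its crossings keep their signs and over-strand orientations, so the digraph structure is the inherited one. Therefore $P_D = P_{D''}$.

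\textbf{Case 2: $e$ is a bridge and $c$ is positive.} This case needs more care, since Lemma \ref{lem:bridge}(ii) introduces a factor $x^{-1}$. I expect the following to rule it out. In an alternating diagram with type I shading, a nugatory crossing whose Tait edge is a bridge has one specific sign. To check this, look at the local picture. Both strands of $c$ pass through the separating curve, so each strand enters and exits the same side. Tracking the orientation through the shaded region that contains both arcs should force the over-strand and under-strand to point so that $c$ is negative, consistent with Lemma \ref{lem:spec_alt}. Indeed, an alternating (Eulerian) vertex structure cannot have a bridge, because every edge of an Eulerian graph lies on a cycle. More rigorously, Lemma \ref{lem:cut_cyc}(i) writes $E_+$ as a union of directed cycles, so no positive edge can be a bridge. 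This kills Case 2 cleanly.

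\textbf{Case 3: $e$ is a loop.} By the dual argument, Lemma \ref{lem:cut_cyc}(ii) writes $E_-$ as a union of directed cuts, and a loop lies in no cut, so $e$ is positive. A positive loop lies in no spanning tree, so it contributes nothing to the weight of any tree. Deleting it gives $G \setminus e$, which is the Tait digraph of the untwisted diagram. The spanning trees and all activities of the other edges are unchanged, since fundamental cycles and cuts avoid the loop. Hence $P_D = P_{D''}$ here as well.

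The main obstacle is confirming that untwisting the nugatory crossing really produces exactly $G/e$ (respectively $G\setminus e$) \emph{as a digraph}, with the correct inherited edge orientations and signs. The Case 2 subtlety is resolved by Lemma \ref{lem:cut_cyc}.
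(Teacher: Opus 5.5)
Your overall plan is the paper's: strip nugatory crossings, then apply the flyping theorem and Lemma~\ref{lem:flype_invar}. Your use of Lemma~\ref{lem:cut_cyc}(i) to see that a nugatory bridge is negative is also exactly the paper's observation.

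The gap is in Case~1, and identically in Case~3. You claim that $G/e$, with its inherited embedding and edge directions, is the Tait digraph of the diagram $D''$ obtained by untwisting $c$. This is false.

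Write $G = G_1 \cup \{e\} \cup G_2$ with $v_i = G_i \cap e$. The isotopy removing $c$ rotates one side of $\gamma$ by $180^\circ$ out of the plane. So by Lemma~\ref{lem:diagram_flip}, the Tait digraph of $D''$ is $\rho(G_1) \cup_{v_1 \sim v_2} G_2$: the $G_1$ piece is reflected and its negative edges are reversed.

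The plane graph $G/e = G_1 \cup_{v_1 \sim v_2} G_2$ is instead the Tait graph of the planar connected smoothing of $c$. That smoothing is not orientation-compatible: at a nugatory crossing the connected smoothing joins the two incoming ends to each other. So your assertion that the crossings ``keep their over-strand orientations'' fails. To orient the smoothed diagram you must reverse one side. This yields a diagram of $(-K_1)\#K_2$ rather than $K_1\#K_2$, where $K_i$ is the link obtained by closing off side $i$. Its polynomial is $P_{K_1}(y,x,w,z)P_{K_2}$, and identifying it with $P_K = P_{K_1}P_{K_2}$ would prove Conjecture~\ref{conj:symmetry} for $K_1$. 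Hence $G/e$ with inherited directions is not the Tait digraph of an alternating diagram of $K$. Neither the identification $P_{G/e} = P_{D''}$ nor Theorem~\ref{thm:root_indep} is available for it as you use them.

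The missing ingredient is flip invariance. After Lemma~\ref{lem:bridge}(i), you still must show $P_{G_1 \cup_v G_2} = P_{\rho(G_1) \cup_v G_2}$. The steps are:
\begin{enumerate}
\item Put the root at the glued vertex and the basepoint in the outer face. The tree bijection from the proof of Proposition~\ref{prop:sum} is purely graph-theoretic, so it applies even though $G/e$ is not a Tait digraph. It gives $P_{G_1 \cup_v G_2} = P_{G_1}P_{G_2}$.
\item Lemma~\ref{lem:graph_flip} gives $P_{G_1} = P_{\rho(G_1)}$.
\item The same product formula reassembles $P_{\rho(G_1)}P_{G_2} = P_{D''}$. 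Theorem~\ref{thm:root_indep} is used only on the genuine Tait digraphs $G$ and $\rho(G_1) \cup_v G_2$, to move root and basepoint freely.
\end{enumerate}
This is the paper's chain $P_G = P_{G_1 \cup G_2} = P_{G_1}P_{G_2} = P_{\rho(G_1)}P_{G_2} = P_{G''}$.

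Your loop case needs the same repair. The paper avoids it by mirroring $D$ (Proposition~\ref{prop:companions}(ii)) so that $e$ becomes a bridge.
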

	
	\begin{proof}
		Let $G$ be the Tait digraph of the diagram $D$. If $D$ contains a nugatory crossing $c$, then the corresponding edge $e$ is either a bridge or a loop; by mirroring $D$ if necessary, we assume the former. Then we may write $G = G_1 \cup \{e\} \cup G_2$, where $G_1$ and $G_2$ are the components of $G \setminus e$, and let $v_i$ be the vertex $G_i \cap e$ for $i \in \{1,2\}$.
		
		Let $D''$ be the diagram obtained from $D$ by removing $c$, as described at the beginning of the section. Let $G''$ be the Tait digraph of $D''$---then without loss of generality, $G''$ is the result of gluing $\rho(G_1)$ and $G_2$ together by identifying $v_1$ and $v_2$:
		$$
		G'' = \rho(G_1) \cup_{v_1 \sim v_2} G_2. 
		$$
		
		By Lemma \ref{lem:cut_cyc} (or by a quick inspection), $e \in E_-$. Thus, applying Lemma \ref{lem:bridge}, Proposition \ref{prop:sum} and Lemma \ref{lem:diagram_flip}, we have:
		$$
		P_D = P_G = P_{G_1 \cup_{v_1 \sim v_2} G_2} = P_{G_1}P_{G_2} = P_{\rho(G_1)}P_{G_2} = P_{G''} = P_{D''}.
		$$
		We conclude that removing $c$ does not change the value of $D$, and proceeding inductively we may assume $D$ and $D'$ are both reduced. Then $D$ and $D'$ are related by a sequence of flypes \cite{meth91}, and the result follows from Lemma \ref{lem:flype_invar}.
	\end{proof}

	\begin{rmk}
		A {\em mutation} is an operation similar to a flype, where any four-ended tangle in a diagram is cut out, rotated 180 degrees along any axis, and then glued back in. Unlike flyping, mutation does not necessarily preserve either the orientation or the isotopy type of a link. An argument much like the proof of Lemma \ref{lem:flype_invar} can be used to show $P$ is invariant under orientation-preserving mutations, but we do not know if $P$ is mutation invariant in general. This is a natural question for further study.
	\end{rmk}
	
	\section{Symmetry and Trapezoidality Results}
	\label{sec:symmetry}
	
	In this section we prove Theorems \ref{thm:sym_one} and \ref{thm:trap}. First, we introduce a homogenization of the {\em Murasugi-Stoimenow polynomial} defined in \cite{must03}.
	
	\begin{defn}
		Let $G$ be a connected digraph with distinguished root vertex $r$, and let $\mathcal{T}$ be its set of spanning trees. Define a polynomial $R_{(G,r)} \in \Z[x,y]$ by
		$$
		R_{(G,r)}(x,y) = \sum_{T \in \mathcal{T}} x^{\iota(T)}y^{\overline{\iota}(T)} = \sum_{T \in \mathcal{T}} x^{\iota(T)}y^{\text{rk}(G) - \iota(T)},
		$$
		where $\iota$ and $\overline{\iota}$ are as in Definition \ref{def:int}.
	\end{defn}
	
	We will also need the following results of Hafner-M\'esz\'aros-Vidinas and Gao-Yuan.
	
	\begin{thm}[{\cite[Theorem 1.3]{hmv25}}]
		\label{thm:hmv}
		If $G$ is Eulerian, then the polynomial $R_{(G,r)}$ is independent of the choice of root $r$, and is fixed by the involution that swaps $x$ and $y$.
	\end{thm}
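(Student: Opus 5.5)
The plan is to express $R_{(G,r)}$ as a principal minor of a weighted Laplacian using the directed matrix-tree theorem, and then read off both claims from linear algebra.

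\textbf{Encoding $R_{(G,r)}$ as a determinant.} Build an auxiliary weighted digraph $H$ on $V(G)$: replace each edge $e$ of $G$ from $u$ to $v$ by two arcs, $u\to v$ with weight $y$ and $v\to u$ with weight $x$. A spanning tree $T$ of $G$ corresponds bijectively to an in-arborescence of $H$ rooted at $r$, obtained by orienting every edge of $T$ toward $r$.
\begin{itemize}
\item If $e$ points toward $r$ in $T$, it is internally semi-passive, and the arborescence uses the arc of weight $y$.
\item If $e$ points away from $r$, it is internally semi-active, and the arborescence uses the arc of weight $x$.
\end{itemize}
So the weight of the arborescence is $x^{\iota(T)}y^{\overline{\iota}(T)}$. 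Let $L = D_{out} - A$ be the weighted out-degree Laplacian of $H$, and let $L_r$ be $L$ with the row and column of $r$ deleted. Tutte's directed matrix-tree theorem then gives
$$
R_{(G,r)}(x,y) = \det(L_r).
$$

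\textbf{Using the Eulerian hypothesis.} At each vertex $v$ we have $\mathrm{in}(v)=\mathrm{out}(v)=\deg(v)/2$. The weighted out-degree of $v$ in $H$ is $y\,\mathrm{out}(v)+x\,\mathrm{in}(v)$, and its weighted in-degree is $y\,\mathrm{in}(v)+x\,\mathrm{out}(v)$. These agree, so $L$ has zero row sums and zero column sums.

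\textbf{Root independence.} A square matrix with zero row and column sums has all cofactors equal. One way to see this is that $\mathrm{adj}(L)$ is annihilated by $L$ on both sides. When $\det L_r\neq 0$, $L$ has rank $n-1$ with one-dimensional kernel and cokernel spanned by $\mathbf 1$, which forces $\mathrm{adj}(L)=c\,\mathbf 1\mathbf 1^\top$. Otherwise, pass to generic weights, or run the elementary argument: add all other rows and columns to the deleted ones. In particular the principal minors $\det L_r$ agree for all $r$, which proves root independence.

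\textbf{Symmetry under $x\leftrightarrow y$.} Swapping $x$ and $y$ replaces $H$ by its reverse digraph. The reverse digraph has adjacency matrix $A^\top$ and out-degree matrix equal to the in-degree matrix of $H$. By the Eulerian property this is again $D_{out}$, so the new Laplacian is $L^\top$. Hence
$$
R_{(G,r)}(y,x)=\det\big((L^\top)_r\big)=\det(L_r)=R_{(G,r)}(x,y).
$$

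\textbf{Main obstacle.} The only delicate point is getting the bookkeeping of orientations right, namely which arc carries $x$ and which carries $y$. It must match Definition \ref{def:int}, and it must be compatible with the in-arborescence (rather than out-arborescence) form of the matrix-tree theorem. I would check this convention on a single directed edge before trusting the rest of the argument.
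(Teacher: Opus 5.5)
The paper does not prove this statement; it quotes it from Hafner--M\'esz\'aros--Vidinas \cite[Theorem 1.3]{hmv25}, and notes \cite[Remark 3.5]{kmp25} as an earlier but unpublished source. Your argument is therefore an independent, self-contained proof, and it is correct.

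Your bookkeeping checks out. For $e\in T$ directed from $u$ to $v$, orienting toward $r$ uses the arc $u\to v$ (weight $y$) exactly when the head $v$ lies on the root side, i.e.\ when $e$ is internally semi-passive. It uses the arc $v\to u$ (weight $x$) exactly when $e$ is semi-active. Parallel edges give distinct arcs, and loops cancel between $D_{out}$ and $A$, so the in-arborescence form of Tutte's theorem applies verbatim.

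You can drop the hedge in the adjugate step. Eulerian includes connectivity, so every spanning tree contributes a monomial with positive coefficient. Hence $\det L_r\neq 0$ in $\mathbb{Q}(x,y)$, and $L$ has rank exactly $n-1$ over that field.

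Compared with the paper's citation, your route makes both conclusions come from the balance condition $\mathrm{in}(v)=\mathrm{out}(v)$ alone, through two identities: $L\mathbf{1}=0=\mathbf{1}^\top L$ gives root independence, and $L(y,x)=L(x,y)^\top$ gives the $x\leftrightarrow y$ symmetry. It is the weighted version of the classical fact, mentioned in the remark after Theorem \ref{thm:root_indep}, that arborescence counts of an Eulerian digraph do not depend on the root. It is also close in spirit to the paper's own proof of Theorem \ref{thm:root_indep}, where a kernel vector of the unreduced Kasteleyn matrix is used to compare two reduced determinants. As a bonus, your argument shows that every cofactor of $L$, not just the principal ones, equals $R_{(G,r)}$.
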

	
	\begin{thm}[{\cite[Corollary 1.3]{gayu26}}]
		\label{thm:gayu}
		If $G$ is Eulerian with root $r$, and $c_j$ denotes the coefficient of $x^jy^{\text{rk}(G) - j}$ in $R_{(G,r)}$, then the sequence $c_0, c_1, \dots, c_{\text{rk}(G)}$ is trapezoidal.
	\end{thm}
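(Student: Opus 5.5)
The plan is to read $R_{(G,r)}$ as an $h$-vector and then apply the $g$-theorem. Let $G$ be Eulerian and connected. Consider its root polytope
$$Q_G=\operatorname{conv}\{\mathbf e_{h(e)}-\mathbf e_{t(e)} : e\in E(G)\}\subset\R^{V}.$$
It lies in the hyperplane where the coordinates sum to zero, and it has dimension $\text{rk}(G)$. Because $G$ is Eulerian, $E$ is a disjoint union of directed cycles. Summing the generators over such a decomposition, with positive weights, shows that $\mathbf 0$ lies in the relative interior of $Q_G$. Moreover, each facet of $Q_G$ has lattice distance one from the origin. This is the standard total-unimodularity argument for incidence matrices, since facets are cut out by integer potentials. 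So $Q_G$ is reflexive.

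The first step, which I expect to be the main obstacle, is to identify $R_{(G,r)}(x,1)$ with the $h^*$-polynomial of $Q_G$ in such a way that $\iota(T)$ records the degree. The approach I would try first follows K\'alm\'an--Postnikov's treatment of bipartite root polytopes, adapted to digraphs. Each spanning tree $T$ gives a unimodular simplex $\Delta_T=\operatorname{conv}(\{\mathbf 0\}\cup\{\mathbf e_{h(e)}-\mathbf e_{t(e)} : e\in T\})$, unimodular because incidence matrices are totally unimodular. I would then exhibit a regular unimodular triangulation of $\partial Q_G$, coned to the origin, and read off a shelling order on spanning trees (or on a suitable subfamily). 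Fixing the root $r$ should make the number of new faces at the step for $T$ equal to $\iota(T)$, the number of tree edges pointing away from $r$. Alternatively, a half-open decomposition indexed by trees, in the style of the Kálmán--Postnikov/Tutte activity arguments, might work. Once this is done, Theorem \ref{thm:hmv} (independence of $r$, and the $x\leftrightarrow y$ symmetry) is consistent with the Dehn--Sommerville/Gorenstein symmetry of $h^*$ for a reflexive polytope. I would use this agreement as a sanity check on the identification.

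Second, since the triangulation of $\partial Q_G$ is regular and unimodular and $Q_G$ is reflexive, the cone over it is a unimodular triangulation of $Q_G$. The standard argument then gives $h^*(Q_G)=h(\Delta)$, where $\Delta$ is the boundary triangulation. Regularity implies that $\Delta$ is the boundary complex of a simplicial polytope: lift by the regular height function and take the convex hull, or equivalently push the vertices. Hence the sequence $(c_0,\dots,c_{\text{rk}(G)})$ is the $h$-vector of a simplicial $\text{rk}(G)$-polytope.

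Third, apply the $g$-theorem of Stanley and Billera--Lee \cite{stan80}. The $g$-vector $g_i=c_i-c_{i-1}$, for $i\le \text{rk}(G)/2$, is an $M$-sequence. An $M$-sequence has $g_0=1$ and satisfies Macaulay's bound $g_{i+1}\le g_i^{\langle i\rangle}$. In particular, once some $g_i=0$, every later $g_j$ also vanishes. So $c_0<c_1<\cdots<c_{i_1}=c_{i_1+1}=\cdots$ holds up to the middle index, and the symmetry $c_j=c_{\text{rk}(G)-j}$ from Dehn--Sommerville (or from Theorem \ref{thm:hmv}) gives the strictly decreasing tail. This is exactly trapezoidality.

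The delicate points are all in the first step. One is constructing a triangulation whose simplices and shelling statistics genuinely match the tree statistic $\iota$ rather than some other activity; it may be necessary to pass to a subfamily of trees, such as those compatible with a fixed cycle decomposition, and argue that the generating function agrees. The other is proving the triangulation is regular rather than merely a shellable sphere. If regularity fails, I would fall back on the $g$-theorem for homology spheres (Adiprasito), which would also suffice.
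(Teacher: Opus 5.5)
The paper does not prove this statement. It imports it from Gao--Yuan \cite{gayu26} and remarks only that their proof rests on the $g$-theorem. Your steps two and three are the standard Bruns--R\"omer argument: a reflexive polytope with a regular unimodular triangulation has $h^*$ equal to the $h$-vector of a simplicial polytope, and the $g$-theorem plus palindromicity then give trapezoidality. The problem is step one, which is not merely delicate but false: $R_{(G,r)}$ is in general neither the $h^*$-polynomial of $Q_G$ nor the $h$-vector of any single simplicial sphere.

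\textbf{Why step one fails.} The coefficient $c_0$ counts spanning trees all of whose edges point toward $r$, i.e.\ in-arborescences rooted at $r$. This number is usually larger than $1$, whereas $h_0=h^*_0=1$ for every simplicial complex and every lattice polytope. Two examples:
\begin{itemize}
\item Two vertices joined by two edges in each direction give $R_{(G,r)}=2x+2y$, while $Q_G$ is the segment $[-1,1]$ with $h^*=1+t$.
\item The bidirected triangle, which is simple as a digraph, gives $R_{(G,r)}=3x^2+6xy+3y^2$, while $Q_G$ is the $A_2$ hexagon with $h^*=1+4t+t^2$.
\end{itemize}
More structurally, any unimodular triangulation of $Q_G$ has exactly $\text{Vol}(Q_G)$ maximal simplices (normalized volume). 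By contrast, $R_{(G,r)}(1,1)$ is the total number of spanning trees, which is typically larger. Moreover, $Q_G$ does not see edge multiplicities at all, yet the graphs $(G/I)\setminus(E_-\setminus I)$ to which the paper applies this theorem typically have many parallel edges. So no triangulation or shelling indexed by a subfamily of trees can reproduce $R_{(G,r)}$, and the sanity check against Theorem~\ref{thm:hmv} that you propose would already fail at $c_0$. For Tait digraphs of special alternating links, $c_0$ is, up to sign, the leading Alexander coefficient, so $c_0>1$ whenever the link is not fibered.

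\textbf{What is missing.} You need a mechanism that accommodates $c_0>1$. A route compatible with your steps two and three would be:
\begin{itemize}
\item Partition the spanning trees into classes, necessarily $c_0$ of them since each class must contribute constant term $1$.
\item Arrange that the $\iota$-generating function of each class is the $h$-polynomial of a simplicial $\text{rk}(G)$-polytope, or the $h^*$-polynomial of a reflexive polytope with a regular unimodular triangulation.
\item Then each class polynomial is palindromic of degree $\text{rk}(G)$ and trapezoidal by the $g$-theorem.
\item A sum of symmetric trapezoidal sequences with common support is again trapezoidal; this is the fact from \cite{mur85} used in the proof of Theorem~\ref{thm:trap}.
\end{itemize}
Producing such a decomposition, matched to the statistic $\iota$, is the real content of the result, and your proposal does not supply it. As written, your argument only shows that $h^*(Q_G)$ is trapezoidal, and that is a different polynomial.
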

	
	Theorem \ref{thm:hmv} is also implied by the earlier result \cite[Remark 3.5]{kmp25}, but the proof referenced there has not yet been published. Our theorems will follow from these and from the next two lemmas.
	
	For the rest of the section, given a Tait digraph $G = (V, E_- \cup E_+)$, define
	$$
	\mathcal{I}_- = \{I \subset E_- \mid \text{there exists $J \subset E_+$ such that $I \cup J$ is a spanning tree}\}.
	$$
	
	\begin{lemma}
		\label{lem:contracting}
		Let $G = (V, E_+ \sqcup E_-)$ be a Tait digraph with root $r$ and basepoint $q$. Then
		$$
		P_{(G,r,q)}(x,y,1,1) = \sum_{I \in \mathcal{I}_-} R_{\big((G/I)\setminus (E_- \setminus I), \overline{r} \big)}(x^{-1},y^{-1}),
		$$
		where we write $\overline{r}$ to indicate the image of $r$ under the contraction $G \to G/I$.
	\end{lemma}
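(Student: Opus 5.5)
Setting $z=w=1$ in Definition \ref{def:poly} kills all dependence on external activity, so
$$
P_{(G,r,q)}(x,y,1,1) = \sum_{T \in \mathcal{T}} x^{-\iota_+(T)} y^{-\overline{\iota}_+(T)}.
$$
The plan is to group spanning trees by their negative part. Each $T$ is determined by the pair $(I,J) = (T\cap E_-, T \cap E_+)$, and $I \in \mathcal{I}_-$ by definition. Fix $I \in \mathcal{I}_-$ and set $H_I = (G/I)\setminus(E_- \setminus I)$, whose edge set is exactly $E_+$. We will show that $J \mapsto J$ is a bijection between $\{J \subset E_+ : I \cup J \in \mathcal{T}\}$ and the spanning trees of $H_I$. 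We will also show it preserves weights: the internal activity of each $e \in J$ in $T = I\cup J$ with root $r$ should equal its activity in $J$ as a tree of $H_I$ with root $\overline{r}$. Summing over $I$ and noting that all edges of $H_I$ are positive, so that $\iota = \iota_+$ there, gives the stated formula with $R$ evaluated at $(x^{-1},y^{-1})$.

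The bijection is the standard fact about contraction. Since $I$ is acyclic, $I\cup J$ is a spanning tree of $G$ if and only if $J$ is a spanning tree of $G/I$. Every edge of $J$ lies in $E_+$, so $J$ avoids $E_-\setminus I$, and deleting those edges does not affect whether $J$ spans. Hence $J$ is a spanning tree of $G/I$ if and only if it is a spanning tree of $H_I$. One point needs checking: $H_I$ must be connected, so that $R_{(H_I,\overline r)}$ makes sense. This holds because $I \in \mathcal{I}_-$ guarantees such a $J$ exists, and conversely if no $J$ exists there is nothing to sum.

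For the weights, take $e \in J$ and let $\gamma$ be the unique path in $T$ from $r$ through $e$. Then $e$ is internally semi-active exactly when it points away from $r$ along $\gamma$. Contracting $I$ sends $\gamma$ to $\gamma/(\gamma\cap I)$, which is the unique path in $J\subset H_I$ from $\overline{r}$ through $e$, and the direction of $e$ relative to the root end is unchanged. This is the same argument as in the proof of Lemma \ref{lem:two_cut}. Equivalently, the fundamental cut of $e$ in $J$ inside $G/I$ is obtained from $\text{Cut}_T(e)$, since the two vertex components of $T\setminus e$ are unions of $I$-classes. Thus $\iota_+(T) = \iota(J)$ and $\overline{\iota}_+(T) = \overline{\iota}(J)$ in $H_I$.

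I expect no serious obstacle. The only delicate step is the bookkeeping in the contraction argument: the root-side component of $T\setminus e$ must map to the root-side component of $J\setminus e$ in $G/I$. This holds because contracting edges of $I$, which lie within components, never merges the two sides. Notably, the root independence of $R$ and the Eulerian property (Lemma \ref{lem:cut_cyc}) are not needed for this lemma; they will matter only afterward, when Theorem \ref{thm:hmv} and Theorem \ref{thm:gayu} are applied to each $H_I$.
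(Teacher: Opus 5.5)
Your proposal is correct and follows the paper's own proof. Like the paper, you group spanning trees by $I = T\cap E_-$ and identify $T\cap E_+$ with a spanning tree of $(G/I)\setminus(E_-\setminus I)$. You then transfer internal semi-activity through the contraction by the same path argument used for Lemma~\ref{lem:two_cut}, and your connectivity check for $H_I$ is a small detail the paper leaves implicit.
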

	
	\begin{proof}
		Let $\mathcal{T}$ be the set of spanning trees of $G$; then by definition
		$$
		P_{(G,r,q)}(x,y,1,1) = \sum_{T \in \mathcal{T}}x^{-\iota_+(T)}y^{-\overline{\iota}_+(T)}.
		$$
		Given a tree $T \in \mathcal{T}$, write $T_\pm = T \cap E_\pm$. As in the proof of Lemma \ref{lem:two_cut}, one may check that $T_+$ is a spanning tree of $(G/T_-)\setminus (E_- \setminus T_-)$, and that $\iota(T_+) = \iota_+(T)$ and $\overline{\iota}(T_+) = \overline{\iota}_+(T)$, since the non-contracted edges of $T$ in $T_+$ are precisely the positive edges, and a non-contracted edge has the same activity as its image.
		
		We can thus write
		$$
		P_{(G,r,q)}(x,y,1,1) = \sum_{T \in \mathcal{T}}x^{-\iota(T_+)}y^{-\overline{\iota}(T_+)},
		$$
		where $\iota(T_+)$ and $\overline{\iota}(T_+)$ are computed in the graph $(G/T_-)\setminus (E_- \setminus T_-)$ for a given $T = T_- \cup T_+$. Letting $\mathcal{T}_{(G/T_-)\setminus (E_- \setminus T_-)}$ be the spanning trees of $(G/T_-)\setminus (E_- \setminus T_-)$ for a given $T_-$, this becomes
		\begin{align*}
		P_{(G,r,q)}(x,y,1,1) &=  \sum_{I \in \mathcal{I}_-} \Big( \sum_{T_+ \subset E_+ \text{ with } I \cup T_+ \in \mathcal{T}} x^{-\iota(T_+)}y^{-\overline{\iota}(T_+)} \Big) \\
		&= \sum_{I \in \mathcal{I}_-} \sum_{T \in \mathcal{T}_{(G/I)\setminus (E_- \setminus I)}}x^{-\iota(T)}y^{-\overline{\iota}(T)} \\
		&= \sum_{I \in \mathcal{I}_-} R_{\big((G/I)\setminus (E_- \setminus I), \overline{r} \big)}(x^{-1},y^{-1})
		\end{align*}
		as desired. In the second equation, we use the fact that $I \cup T_+$ is a spanning tree of $G$ if and only if $T_+$ is a spanning tree of $(G/I) \setminus (E_- \setminus I)$.
	\end{proof}
	
	\begin{lemma}
		\label{lem:eulerian}
		Let $G = (V, E_+ \sqcup E_-)$ be a Tait digraph. Then for any subset $I \in \mathcal{I}_-$, the graph $(G / I) \setminus (E_- \setminus I)$ is Eulerian.
	\end{lemma}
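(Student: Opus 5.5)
The plan is to realize $(G/I)\setminus(E_-\setminus I)$ as a Tait digraph of an alternating diagram with only positive crossings, and then invoke Lemma \ref{lem:spec_alt} (i) together with Corollary \ref{cor:bip_eul}. Fix $I \in \mathcal{I}_-$ and write $H = (G/I)\setminus(E_- \setminus I)$. Every edge of $G$ is either in $E_+$, in $I$ (contracted), or in $E_-\setminus I$ (deleted). So $H$ has edge set exactly $E_+$ with its inherited orientations.

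First I would address connectivity. Since $I\in\mathcal{I}_-$, there is $J\subset E_+$ with $I\cup J$ a spanning tree of $G$. Contracting $I$ sends this tree to $J$, which is a spanning tree of $G/I$ using only edges of $E_+$. Hence $J$ survives the deletion of $E_-\setminus I$, and $H$ is connected.

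Next, each vertex of $H$ has equal in- and out-degree. I would prove this directly rather than through diagrams, using Lemma \ref{lem:cut_cyc} (i): $E_+$ is a disjoint union of directed cycles $C_1,\dots,C_k$ of $G$. Contracting edges of $I$ (which are not in any $C_i$) turns each $C_i$ into a closed directed walk in $G/I$. Deleting negative edges does not touch the $C_i$. Thus $E(H)$ is a disjoint union of closed directed walks, so every vertex of $H$ is balanced. Together with connectivity, $H$ is Eulerian, in the directed sense needed for Theorems \ref{thm:hmv} and \ref{thm:gayu}.

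Loops deserve a small check. A positive edge whose endpoints are identified by contracting $I$ becomes a directed loop, contributing one in and one out, so it is harmless. Alternatively, one could phrase the argument via Corollary \ref{cor:closed}: $H$ is obtained from $G$ by contracting the negative edges $I$ (which the corollary does not directly allow) and deleting the negative edges $E_-\setminus I$. The obstacle there is that contracting negative edges is not one of the closed operations. For that reason I prefer the cycle-decomposition argument above, which sidesteps diagrams entirely. The only delicate point is confirming that contracting non-cycle edges preserves the closed-walk decomposition, which is immediate.
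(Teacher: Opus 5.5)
Your argument is correct and is essentially the paper's proof: connectivity comes from $I$ extending to a spanning tree by adding positive edges, and balanced in- and out-degrees come from the decomposition of $E_+$ into directed cycles (Lemma \ref{lem:cut_cyc} (i)), which become closed directed walks (unions of directed cycles) after contracting $I$ and deleting $E_- \setminus I$. The Tait-digraph route you announce in your first sentence is never actually used, and rightly so, since Corollary \ref{cor:closed} does not cover contracting negative edges.
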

	
	\begin{proof}
		Let $G' = (G / I) \setminus (E_- \setminus I)$ for some $I \in \mathcal{I}_-$; then there is a natural identification of $E(G')$ with $E_+$. By Lemma \ref{lem:cut_cyc} (i), the set $E_+$ can be written as a disjoint union of directed cycles of $G$. Since $G'$ is obtained from $G$ by contracting and deleting edges, each of these cycles is a union of directed cycles of $G'$. Thus $E(G')$ can be written as a disjoint union of directed cycles, and since $I$ extends to a spanning tree by adding positive edges, $G'$ is connected. It follows that $G'$ is Eulerian.
	\end{proof}
	
	We now prove the main theorems, which we restate here.
	
	\begin{named_thm}{\refthm{sym_one}}
		For any alternating link $K$,
			$$
			P_K(x,y,1,1) = P_K(y,x,1,1)
			$$
		and
			$$
			P_K(1,1,z,w) = P_K(1,1,w,z).
			$$
	\end{named_thm}
	
	\begin{proof}
		By mirroring (Proposition \ref{prop:companions} (ii)), it suffices to prove $P_K(x,y,1,1)$ is preserved by the involution which swaps $x$ and $y$. Let $D$ be an alternating diagram of $K$, $G$ its Tait digraph, and fix a root $r$ and adjacent basepoint $q$. Then by Lemma \ref{lem:contracting}
		$$
			P_K(x,y,1,1) = P_{(G,r,q)}(x,y,1,1) = \sum_{I \in \mathcal{I}_-} R_{\big((G/I)\setminus (E_- \setminus I), \overline{r} \big)}(x^{-1},y^{-1}),
		$$
		and each graph $(G/I) \setminus (E_- \setminus I)$ is Eulerian by Lemma \ref{lem:eulerian}. Thus, by Theorem \ref{thm:hmv}, each polynomial $R_{\big((G/I)\setminus (E_- \setminus I), \overline{r} \big)}(x^{-1},y^{-1})$ is fixed by the involution $x \leftrightarrow y$. It follows that $P_K$ is as well.
	\end{proof}
	
	\begin{named_thm}{\refthm{trap}}
		For any alternating link $K$, each of the following sequences is trapezoidal:
		\begin{itemize}
			\item The even-degree coefficients of the polynomial $P_K(t^{-1}, t,1,1)$.
			\item The odd-degree coefficients of the polynomial $P_K(t^{-1}, t,1,1)$.
			\item The even-degree coefficients of the polynomial $P_K(1, 1,t^{-1},t)$.
			\item The odd-degree coefficients of the polynomial $P_K(1, 1,t^{-1},t)$.
		\end{itemize}
	\end{named_thm}
	
	\begin{proof}
		We follow the notation of the previous argument. As in that proof it suffices to check the first two statements, and we use the identity
		$$
		P_K(x,y,1,1) = \sum_{I \in \mathcal{I}_-} R_{\big((G/I)\setminus (E_- \setminus I), \overline{r} \big)}(x^{-1},y^{-1}),
		$$
		where each graph $(G/I) \setminus (E_- \setminus I)$ is Eulerian. For each $I \in \mathcal{I}_-$, let
		$$
		G_I = (G/I)\setminus (E_- \setminus I).
		$$
		Then the sequence of coefficients of $R_{(G_I, \overline{r} )}(x^{-1},y^{-1})$ is trapezoidal by Theorem \ref{thm:gayu}, and therefore so is the sequence of nonzero coefficients of
		$$
		R_{(G_I, \overline{r})}((t^{-1})^{-1},(t)^{-1}) = \sum_{T \in \mathcal{T}_{G_I}} t^{2\iota(T) - \text{rk}(G_I)}.
		$$
		By the BEST Theorem \cite{eb51,tusm41}, since $G_I$ is Eulerian, it admits spanning trees $T$ with $\iota(T) = 0$ and with $\iota(T) = |T| = \text{rk}(G_I)$. Thus, writing $c_j$ for the coefficient of $t^j$, the sequence of nonzero coefficients of $R_{(G_I, \overline{r})}$ is precisely
		$$
		c_{-\text{rk}(G_I)}, c_{-\text{rk}(G_I) + 2}, \dots, c_{\text{rk}(G_I) - 2}, c_{\text{rk}(G_I)},
		$$
		and the index of every coefficient is either odd or even depending on $\text{rk}(G_I)$. Note that
		$$
		\text{rk}(G_I) = \text{rk}(G) - |I|.
		$$
		Furthermore, since $R_{(G_I, \overline{r})}(x,y)$ is fixed by the involution $x \leftrightarrow y$, $c_j = c_{-j}$ for all $j$ and we say the coefficient sequence is {\em symmetric (about zero)}.
		
		Let $m_1 = \min(\{|I| \mid I \in \mathcal{I}_-\})$ and $m_2 = \max(\{|I| \mid I \in \mathcal{I}_i\})$, and for any $k$ with $m_1 \leq k \leq m_2$ let
		$$
		R_k = \sum_{I \in \mathcal{I}_-, \ |I| = k} R_{(G_I, \overline{r})}((t^{-1})^{-1},(t)^{-1}).
		$$
		Each polynomial on the right has the same support, so by \cite[Proposition 2.1]{mur85} the (nonzero) coefficient sequence of $R_k$ is symmetric and trapezoidal for all $k$. Next, write
		$$
		P_K(x,y,1,1) = \sum_{m_1 \leq k \leq m_2, \ k \text{ even}} R_k + \sum_{m_1 \leq k \leq m_2, \ k \text{ odd}} R_k.
		$$
		To prove the theorem, we must show the coefficient sequence of each summation on the right is symmetric and trapezoidal.
		
		We will prove the even case; the odd case is identical. To simplify notation, we also assume $m_1$ and $m_2$ are both even. \cite[Proposition 2.1]{mur85} implies that, if $P$ and $P'$ are two Laurent polynomials whose even-degree coefficient sequences are symmetric and trapezoidal, and if the minimum degree of $P$ is exactly two less than the minimum of degree of $P'$, then the sequence of even degree coefficients of $P + P'$ is also symmetric and trapezoidal. Write
		$$
		\sum_{m_1 \leq k \leq m_2, \ k \text{ even}} R_k = R_{m_1} + (R_{m_1 + 2} + (R_{m_1 + 4} + \cdots + ( R_{m_2 - 2} + R_{m_2}))).
		$$
		By the above fact, if $R_{m_2 - 2}$ is not identically zero, then the sum $R_{m_2 - 2} + R_{m_2}$ in the innermost parantheses is a Laurent polynomial with minimum degree $m_2 - 2 - \text{rk}(G)$, whose even-degree coefficient sequence is symmetric and trapezoidal. Proceeding inductively, we find that the even-degree coefficients of the sum $\sum_{m_1 \leq k \leq m_2, \ k \text{ even}} R_k$ are symmetric and trapezoidal so long as none of the $R_k$ are zero.
		
		Thus, to complete the proof it suffices to show that for all $k$ with $m_1 < k < m_2$, there exists $I \in \mathcal{I}_-$ with $|I| = k$, so that $R_k$ is nonzero. Fix $I_1, I_2 \in \mathcal{I}_-$ with $|I_1| = m_1$ and $|I_2| = m_2$. Then by definition there exist spanning trees $T_1$ and $T_2$ of $G$ such that $T_1 \cap E_- = I_1$ and $T_2 \cap E_- = I_2$. By the basis exchange property for spanning trees, if $T_1 \neq T_2$, then there exist edges $e \in T_1 \setminus T_2$ and $e' \in T_2 \setminus T_1$ such that $(T_1 \setminus \{e\}) \cup \{e'\}$ is a spanning tree of $G$. Therefore there is a sequence of spanning trees of $G$,
		$$
		T_1 = T'_1, T'_2, \dots, T'_r = T_2
		$$
		such that $T'_{j + 1}$ is obtained from $T'_j$ by exchanging a single edge. It follows that
		$$
		||T'_{j + 1} \cap E_-| - |T'_j \cap E_-|| \leq 1
		$$
		for all $j = 1, \dots, r - 1$. Since $|T'_1 \cap E_-| = m_1$ and $|T'_r \cap E_-| = m_2$, there exists $T'_j$ with $|T'_j \cap E_-| = k$ for all $m_1 \leq k \leq m_2$. Setting $I = T'_j \cap E_-$ gives the desired $I \in \mathcal{I}_-$.
	\end{proof}

	\bibliography{main_bib}{}
	\bibliographystyle{amsplain}
	
\end{document}